\theoremstyle{plain}
\newtheorem{lema}{Lemma}[section]
\newtheorem{prop}[lema]{Proposition}
\newtheorem{teo}[lema]{Theorem}
\newtheorem*{intro1}{Theorem \ref{triangle}}
\newtheorem*{intro2}{Theorem \ref{conn}}
\newtheorem*{intro3}{Corollary \ref{cat}}
\newtheorem{coro}[lema]{Corollary}
\theoremstyle{remark}
\newtheorem{obs}[lema]{Remark}
\theoremstyle{definition}
\newtheorem{defi}[lema]{Definition}
\newtheorem{ej}[lema]{Example}
\begin{document}

\title[Star clusters in independence complexes of graphs]{Star clusters in independence complexes of graphs}

\author[J.A. Barmak]{Jonathan Ariel Barmak $^{\dagger}$}

\thanks{$^{\dagger}$ Supported by grant KAW 2005.0098 from the Knut 
and Alice Wallenberg Foundation.}

\address{Mathematics Department\\
Kungliga Tekniska h\"ogskolan\\
Stockholm, Sweden}

\email{jbarmak@kth.se}

\begin{abstract}
We introduce the notion of \textit{star cluster} of a simplex in a simplicial complex. This concept provides a general tool to study the topology of independence complexes of graphs. We use star clusters to answer a question arisen from works of Engstr\"om and Jonsson on the homotopy type of independence complexes of triangle-free graphs and to investigate a large number of examples which appear in the literature. We present an alternative way to study the chromatic number of a graph from a homotopical point of view and obtain new results regarding the connectivity of independence complexes. 
\end{abstract}

\subjclass[2000]{57M15, 05C69, 55P15, 05C10}

\keywords{Independence complexes, graphs, simplicial complexes, homotopy types, homotopy invariants.}

\maketitle

\section{Introduction} \label{sectionintro}

Since Lov\'asz' proof of the Kneser conjecture in 1978, numerous applications of algebraic topology to combinatorics, and in particular to graph theory, have been found. A recurrent strategy in topological combinatorics consists in the study of homotopy invariants of certain CW-complexes constructed from a discrete structure to obtain combinatorial information about the original object. In Lov\'asz' prototypical example, connectivity properties of the \textit{neighborhood complex} $\mathcal{N}(G)$ of a graph $G$ are shown to be closely related to the chromatic number $\chi (G)$ of $G$. Lov\'asz conjecture states that there exists a similar relationship between the so called \textit{Hom complexes} Hom$(H, G)$ and $\chi (G)$ when $H$ is a cycle with an odd number of vertices. The Hom complex Hom$(H,G)$ is homotopy equivalent to $\mathcal{N}(G)$ when $H$ is the complete graph on two vertices $K_2$. Babson and Kozlov \cite{Bab} proved this conjecture in 2007. In their proof they used that Hom$(G,K_n)$ is linked to another polyhedron associated to $G$, which is called the \textit{independence complex} of $G$. Given a graph $G$, its independence complex $I_G$ is the simplicial complex whose simplices are the independent sets of vertices of $G$. In this approach to the conjecture it was then needed to understand the topology of independence complexes of cycles.

For any finite simplicial complex $K$ there exists a graph $G$ such that $I_G$ is homeomorphic to $K$. Specifically, given a complex $K$, we consider the graph $G$ whose vertices are the simplices of $K$ and whose edges are the pairs $(\sigma, \tau)$ of simplices such that $\sigma$ is not a face of $\tau$ and $\tau$ is not a face of $\sigma$. Then $I_G$ is isomorphic to the barycentric subdivision $K'$ of $K$. In particular, the homotopy types of independence complexes of graphs coincide with homotopy types of compact polyhedra.

In the last years a lot of attention has been drawn to study the general problem of determining all the possible homotopy types of the independence complexes of graphs in some particular class. For instance, Kozlov \cite{Koz} investigates the homotopy types of independence complexes of cycles and paths, Ehrenborg and Hetyei \cite{Ehr} consider this question for forests, Engstr\"om \cite{Eng2} for claw-free graphs, Bousquet-M\'elou, Linusson and Nevo \cite{Bou2} for some square grids, Braun \cite{Bra} for Stable Kneser graphs and Jonsson \cite{Jon} for bipartite graphs. Other results investigate how the topology of the independence complex changes when the graph is modified in some particular way. Engstr\"om \cite{Eng} analyzes what happens when some special points of the graph are removed and Csorba \cite{Cso} studies how subdivisions of the edges of a graph affect the associated complex.
 
The purpose of this paper is two-fold: to introduce a notion that allows the development of several techniques which are useful to study homotopy types of independence complexes, and to establish new relationships between combinatorial properties of graphs and homotopy invariants of their independence complexes. We have mentioned that independence complexes are closely related to Hom complexes and therefore, they can be used to study chromatic properties of graphs. In this paper we show in a direct way how to use independence complexes to investigate colorability of graphs. We will obtain lower bounds for the chromatic number of a graph in terms of a numerical homotopy invariant associated to its independence complex. On the other hand we will introduce some ideas that are used to study the connectivity of $I_G$ in terms of combinatorial properties of $G$.  

\medskip 

One of the motivating questions of this work appears in Engstr\"om's Thesis \cite{Eng3} and concerns the existence of torsion in the homology groups of independence complexes of triangle-free graphs (i.e. graphs which do not contain triangles). Recently, Jonsson \cite{Jon} proved that for any finitely generated abelian group $\Gamma$ and any integer $n\ge 2$, there exists a triangle-free graph $G$ such that the (integral) homology group $H_n(I_G)$ is isomorphic to $\Gamma$. In fact, he shows that the homotopy types of independence complexes of bipartite graphs are exactly the same as the homotopy types of suspensions of compact polyhedra. Two natural questions arise from this work. Can $H_1(I_G)$ have torsion for some triangle-free graph $G$? And furthermore, what are the homotopy types of independence complexes of triangle-free graphs? In order to give a solution to these problems we introduce the notion of \textit{star cluster} of a simplex in a simplicial complex. The star cluster $SC(\sigma)$ of a simplex $\sigma \in K$ is just the union of the simplicial stars of the vertices of $\sigma$. In general these subcomplexes can have non-trivial homotopy type but we will see that if $K$ is the independence complex of a graph, then the star cluster of every simplex is contractible (Lemma \ref{none}). These fundamental blocks are used to answer both questions stated above. We prove that the homotopy types of complexes associated to triangle-free graphs also coincide with those of suspensions. In fact we show the following stronger result. 

\begin{intro1}
Let $G$ be a graph such that there exists a vertex $v\in G$ which is contained in no triangle. Then the independence complex of $G$ has the homotopy type of a suspension. In particular, the independence complex of any triangle-free graph has the homotopy type of a suspension.
\end{intro1}

From this it is immediately deduced that $H_1(I_G)$ is a free abelian group for every triangle-free graph $G$.

We will see that fortunately, these results are just the first application of star clusters. The fact of being contractible, makes these subcomplexes suitable for developing general tools to attack problems regarding independence complexes. We use star clusters to give alternative and shorter proofs of various known results. Many of the original proofs use Forman's discrete Morse theory or the Nerve lemma \cite[Theorem 10.6]{Bjo}. Star clusters provide a much more basic technique to deal with these and other problems.

The \textit{matching complex} $M_n$ of $K_n$ and the \textit{chessboard complex} $M_{n,m}$ appear in many different contexts in mathematics. The chessboard complex was first considered by Garst \cite{Gar} in connection with Tits coset complexes, and $M_n$ was studied by Bouc in \cite{Bou3} where he worked with Quillen complexes. The homotopy type of these complexes is not completely determined although sharp bounds for the connectivity are known \cite{Bjo2, Sha}. $M_n$ and $M_{n,m}$ are some examples of the larger class of matching complexes. This class is in turn contained in the class of independence complexes of \textit{claw-free graphs}. In \cite{Eng2} Engstr\"om gives a bound for the connectivity of independence complexes of claw-free graphs in terms of the number of vertices and the maximum degree. In Section \ref{sectionmatchings} of this article we use star clusters to prove a sharp bound for the connectivity of matching complexes and independence complexes of claw-free graphs which depends only on the dimension of the complexes.  

\begin{intro2}
Let $G$ be a claw-free graph. Then $I_G$ is $[\frac{dim(I_G)-2}{2}]$-connected.
\end{intro2}

From this result one can deduce that the homology of those complexes is non-trivial only for degrees contained in an interval of the form $\{[\frac{n}{2}], \ldots , n\}$. These techniques are also used to give results of connectivity for general graphs.

The \textit{neighborhood complex} $\mathcal{N}(G)$ mentioned at the beginning is a simplicial complex that one can associate to a given graph $G$. The simplices of $\mathcal{N}(G)$ are the sets of vertices that have a common neighbor. One of the key points of Lov\'asz' celebrated proof of the Kneser conjecture \cite{Lov} is the following result which relates the chromatic number $\chi(G)$ with the connectivity $\nu (\mathcal{N}(G))=max \{ n \ | \ \mathcal{N}(G)$ is $n$-connected$\}$ of the neighborhood complex.

\begin{teo} [Lov\'asz] \label{lov}
Let $G$ be a graph. Then $\chi (G) \ge \nu (\mathcal{N}(G))+3$.
\end{teo}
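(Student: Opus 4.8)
The plan is to run the standard Borsuk--Ulam argument, using the functoriality of the Hom complex $\mathrm{Hom}(K_2,-)$. Recall from the introduction that $\|\mathcal{N}(G)\|$ is homotopy equivalent to $\mathrm{Hom}(K_2,G)$, and observe that the latter carries extra structure: the automorphism of $K_2$ interchanging its two vertices induces a simplicial $\mathbb{Z}/2$-action on $\mathrm{Hom}(K_2,G)$, and this action is free on the geometric realization (no cell is invariant, since $G$ is loopless). Furthermore $\mathrm{Hom}(K_2,-)$ is functorial for graph homomorphisms: a homomorphism $G\to H$ induces a $\mathbb{Z}/2$-equivariant map $\mathrm{Hom}(K_2,G)\to\mathrm{Hom}(K_2,H)$.

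Write $m=\chi(G)$ and fix a proper coloring of $G$, i.e. a graph homomorphism $G\to K_m$. It induces a $\mathbb{Z}/2$-map $\mathrm{Hom}(K_2,G)\to\mathrm{Hom}(K_2,K_m)$. The next ingredient is the computation $\mathrm{Hom}(K_2,K_m)\simeq_{\mathbb{Z}/2}S^{m-2}$ with the antipodal action; this is the Hom-complex counterpart of the elementary fact that $\mathcal{N}(K_m)=\partial\Delta^{m-1}\cong S^{m-2}$. Composing, we get a $\mathbb{Z}/2$-map from the free $\mathbb{Z}/2$-complex $X:=\mathrm{Hom}(K_2,G)$ to the antipodal sphere $S^{m-2}$.

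The last step is the generalized Borsuk--Ulam theorem: if $X$ is a free $\mathbb{Z}/2$-CW-complex that is $n$-connected, then there is no $\mathbb{Z}/2$-equivariant map $X\to S^n$ (equivalently, $\mathrm{coind}_{\mathbb{Z}/2}(X)\ge n+1$ while $\mathrm{coind}_{\mathbb{Z}/2}(S^n)=n$, and $\mathrm{coind}_{\mathbb{Z}/2}$ is monotone under $\mathbb{Z}/2$-maps). Since $X$ is homotopy equivalent to $\|\mathcal{N}(G)\|$, it is $\nu(\mathcal{N}(G))$-connected; the $\mathbb{Z}/2$-map $X\to S^{m-2}$ therefore forbids $X$ from being $(m-2)$-connected, so $\nu(\mathcal{N}(G))\le m-3$, i.e. $\chi(G)=m\ge\nu(\mathcal{N}(G))+3$. (The bound is sharp on $K_m$, where equality holds throughout.)

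The genuinely substantial points are the two \emph{black boxes}: the $\mathbb{Z}/2$-homotopy equivalence $\mathrm{Hom}(K_2,K_m)\simeq_{\mathbb{Z}/2}S^{m-2}$, which requires an explicit description of the Hom complex of a complete graph, and the equivariant-topology input behind Borsuk--Ulam, namely the estimate $\mathrm{coind}_{\mathbb{Z}/2}(X)\ge n+1$ for an $n$-connected free $X$, proved by obstruction-theoretically extending a $\mathbb{Z}/2$-map $S^{n+1}\to X$ over the cells of $S^{n+1}$. By contrast, checking freeness of the swap action and tracking connectivity across the homotopy equivalence $\mathrm{Hom}(K_2,G)\simeq\mathcal{N}(G)$ are routine. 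One can run the same argument with the box complex $\mathsf{B}(-)$ in place of $\mathrm{Hom}(K_2,-)$, or follow Lov\'asz's original, more concrete implementation of this scheme.
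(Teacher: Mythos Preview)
The paper does not prove this theorem at all: it is quoted in the introduction as Lov\'asz's classical result (with a citation to \cite{Lov}) and used only as motivation for the later material on independence complexes and the Lusternik--Schnirelmann category. So there is no ``paper's own proof'' to compare against.

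That said, your sketch is correct and is the standard modern packaging of Lov\'asz's argument via the $\mathrm{Hom}$ complex (or box complex) and equivariant Borsuk--Ulam. The logical chain---functoriality of $\mathrm{Hom}(K_2,-)$, freeness of the swap action, the identification $\mathrm{Hom}(K_2,K_m)\simeq_{\mathbb{Z}/2} S^{m-2}$, and the coindex bound from connectivity---is exactly right, and you are honest about which steps are the real black boxes. Lov\'asz's original 1978 proof does the same thing in more concrete terms (working directly with $\mathcal{N}(G)$ and an explicit Borsuk--Ulam style argument rather than the later $\mathrm{Hom}$-complex formalism), so your final remark about ``Lov\'asz's original, more concrete implementation of this scheme'' is apt. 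Nothing to fix; just be aware that in the context of this paper the theorem is background, not a result being established.
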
 

In Section \ref{sectionls} we study the relationship between the chromatic number of a graph and the topology of its independence complex. The \textit{Strong Lusternik-Schnirelmann category} $Cat(X)$ of a space $X$ is one less that the minimum number of contractible subcomplexes which are needed to cover some CW-complex homotopy equivalent to $X$. This homotopy invariant is not easy to determine in concrete examples. We prove the following result as another application of star clusters:

\begin{intro3}
Let $G$ be a graph. Then $\chi (G) \ge Cat(I_G)+1$. 
\end{intro3}

In particular we obtain non-trivial bounds for chromatic numbers of graphs whose independence complexes are homotopy equivalent to a projective space or a torus.
   
In the last section of the paper we introduce a construction which generalizes Csorba's edge subdivision. This is used to give an alternative proof of a recent result by Skwarski \cite{Skw} regarding planar graphs and to obtain new results about homotopy types of independence complexes of graphs with bounded maximum degree. 

\bigskip

\section{Preliminaries}   

In this section we recall some basic results and introduce the notation that will be needed in the rest of the article.
All the graphs considered will be simple (undirected, loopless and without parallel edges) and finite. All the simplicial complexes we work with are finite. Many times we will use just the word ``complex" to refer to these objects. We will confuse a simplicial complex with its geometric realization. If two complexes $K$ and $L$ are homotopy equivalent, we will write $K\simeq L$. The (non-reduced) suspension of a topological space $X$ is denoted as usual by $\Sigma(X)$. The \textit{(simplicial) star} $st_K(\sigma)$ of a simplex $\sigma$ in a complex $K$ is the subcomplex of simplices $\tau$ such that $\tau \cup \sigma \in K$. The star of a simplex is always a cone and therefore contractible. When there is no risk of confusion we will omit the subscripts in the notation.

\begin{defi}
We say that a complex $K$ is \textit{clique} if for each non-empty set of vertices $\sigma$ such that $\{v,w\} \in K$ for every $v,w\in \sigma$, we have that $\sigma \in K$.   
\end{defi}

The \textit{clique complex} of a graph $G$ is a simplicial complex whose simplices are the cliques of $G$, that is, the subsets of pairwise adjacent vertices of $G$. Then, a complex $K$ is clique if and only if it is the clique complex of some graph. In fact, if $K$ is clique, it is the clique complex of its $1$-skeleton $K^1$.

The \textit{independence complex} $I_G$ of a graph $G$ is the simplicial complex whose simplices are the independent subsets of vertices of $G$. In other words, it is the clique complex of the complementary graph $\overline{G}$. Therefore a complex $K$ is the independence complex of some graph if and only if it is clique.

If $\sigma$ is an independent set in a graph $G$ and $v$ is a vertex of $G$ such that $\sigma \cup \{v\}$ is also independent, we will say that $\sigma$ \textit{can be extended} to $v$. This is equivalent to say that $\sigma \in st_{I_G}(v)$ when $\sigma$ is non-empty.

\begin{obs}
If a graph $G$ is the disjoint union of two graphs $H_1$ and $H_2$, then its independence complex $I_G$ is the (simplicial) join $I_{H_1}*I_{H_2}$. In particular if $H_1$ is just a point, $I_G$ is the simplicial cone with base $I_{H_2}$ and if $H_1$ is an edge, $I_G=\Sigma (I_{H_2})$.
\end{obs}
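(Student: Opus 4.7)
My plan is to unwind the definitions. First I would observe that since $G$ is the disjoint union of $H_1$ and $H_2$, there are no edges of $G$ joining a vertex of $H_1$ to a vertex of $H_2$. Consequently a subset $\sigma \subseteq V(G) = V(H_1) \sqcup V(H_2)$ is independent in $G$ if and only if $\sigma \cap V(H_1) \in I_{H_1}$ and $\sigma \cap V(H_2) \in I_{H_2}$.

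Next I would compare this with the standard description of the simplicial join: the simplices of $I_{H_1} * I_{H_2}$ are precisely the subsets of $V(H_1) \sqcup V(H_2)$ of the form $\sigma_1 \cup \sigma_2$ with $\sigma_i \in I_{H_i}$, where the empty simplex is allowed in either factor. Combining the two observations identifies the simplex set of $I_G$ with that of $I_{H_1} * I_{H_2}$, so $I_G = I_{H_1} * I_{H_2}$ as abstract simplicial complexes, and hence also as geometric realizations.

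The two specializations are then immediate. When $H_1$ is a single vertex $v$, $I_{H_1}$ is one point, and the join of any complex with a point is by definition the simplicial cone on it, so $I_G$ is the cone on $I_{H_2}$. When $H_1$ is an edge on vertices $v_1, v_2$, the only independent subsets of $V(H_1)$ are $\emptyset, \{v_1\}, \{v_2\}$, so $I_{H_1}$ is a pair of disjoint points, i.e.\ $S^0$, and the join with $S^0$ is by definition the (unreduced) suspension, giving $I_G = \Sigma(I_{H_2})$.

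There is essentially no obstacle here: the content is pure bookkeeping with the definitions of independent set and simplicial join. The only convention one has to keep track of is that $\emptyset$ is a face of every non-void simplicial complex, which is exactly what lets subsets entirely contained in one of the $V(H_i)$ (corresponding to the other $\sigma_j$ being empty) fit naturally into the join decomposition.
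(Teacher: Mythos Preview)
Your argument is correct: the identification of $I_G$ with $I_{H_1}*I_{H_2}$ follows immediately by matching the definition of an independent set in a disjoint union with the definition of a simplex in a join, and the two special cases are then instances of $\{\mathrm{pt}\}*K=\textrm{cone}(K)$ and $S^0*K=\Sigma(K)$. In the paper this statement is presented as a remark with no proof at all, so there is nothing to compare beyond the fact that your verification is exactly the routine check the author is leaving to the reader.
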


Recall that if $X_1, X_2$ and $Y$ are three topological spaces and the first two have the same homotopy type, then $X_1*Y\simeq X_2*Y$.

A basic result in topology, sometimes called gluing theorem, says that if

\begin{displaymath}
\xymatrix@C=20pt{ A \ar@{->}^f[r] \ar@{->}^i[d] & Y \ar@{->}[d]\\
		  X \ar@{->}^{\overline{f}}[r] & Z } 
\end{displaymath}
\\[0.5cm]
is a push-out of topological spaces, $f$ is a homotopy equivalence and $i$ is a closed cofibration ($A$ is a closed subspace of $X$ and $(X,A)$ has the homotopy extension property), then $\overline{f}$ is also a homotopy equivalence. For a proof of this result the reader can see \cite[7.5.7 (Corollary 2)]{Bro}. If $K$ is a simplicial complex and $L\subseteq K$ is a subcomplex, the inclusion $L\hookrightarrow K$ is a closed cofibration. This is in fact the unique type of cofibrations that we will work with here. We can use this result to prove that the quotient $K/L$ of a complex $K$ by a contractible subcomplex $L$ is homotopy equivalent to $K$ or that the union of two contractible complexes is contractible if the intersection is contractible. These applications will appear in some proofs below (Lemma \ref{none} and Theorem \ref{kneser}). We will also use the gluing theorem in the proof of Lemma \ref{dos}.  
  
\bigskip

\section{Star clusters and triangle-free graphs}

\begin{defi}
Let $\sigma$ be a simplex of a simplicial complex $K$. We define the \textit{star cluster} of $\sigma$ in $K$ as the subcomplex $$SC_K(\sigma)=\bigcup\limits_{v \in \sigma} st_K(v).$$
%If it is clear of which complex we are talking, we will just write $SC(\sigma)$ instead of $SC_K(\sigma)$.
\end{defi}

\begin{lema} \label{none}
Let $K$ be a clique complex. Let $\sigma$ be a simplex of $K$ and let $\sigma_0, \sigma_1, \ldots ,\sigma_r$ be a collection of faces of $\sigma$ ($r\ge 0$). Then $$\bigcup\limits_{i=0}^r \bigcap\limits_{v \in \sigma_i} st_K(v)$$ is a contractible subcomplex of $K$. In particular, the star cluster of a simplex in a clique complex is contractible.
\end{lema}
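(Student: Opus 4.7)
The plan is to argue by induction on $r$, using the gluing theorem for contractible subcomplexes recalled in the Preliminaries. First I would establish two basic identities that exploit the clique hypothesis on $K$ together with the fact that the $\sigma_i$ are faces of a common simplex $\sigma$. Namely, for any face $\tau$ of $\sigma$,
$$\bigcap_{v\in \tau} st_K(v)=st_K(\tau),$$
and for any two faces $\tau,\rho$ of $\sigma$,
$$st_K(\tau)\cap st_K(\rho)=st_K(\tau\cup\rho).$$
Both identities reduce to the same observation: if $\mu$ is a simplex with $\mu\cup\tau\in K$ and $\mu\cup\rho\in K$, then every pair of vertices in $\mu\cup\tau\cup\rho$ spans an edge of $K$ (pairs inside $\mu\cup\tau$ and inside $\mu\cup\rho$ by hypothesis, and pairs between $\tau$ and $\rho$ because $\tau\cup\rho\subseteq\sigma\in K$), so by cliqueness $\mu\cup\tau\cup\rho\in K$.

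With these identities in hand, the statement becomes: for any finite family of faces $\sigma_0,\dots,\sigma_r$ of $\sigma$, the union $\bigcup_{i=0}^r st_K(\sigma_i)$ is contractible. I would prove this by induction on $r$. The base case $r=0$ is immediate since a simplicial star is a cone. For the inductive step, set
$$U=\bigcup_{i=0}^{r-1} st_K(\sigma_i),\qquad V=st_K(\sigma_r).$$
Then $U$ is contractible by the inductive hypothesis and $V$ is contractible because it is a cone. Using the second identity above,
$$U\cap V=\bigcup_{i=0}^{r-1}\bigl(st_K(\sigma_i)\cap st_K(\sigma_r)\bigr)=\bigcup_{i=0}^{r-1} st_K(\sigma_i\cup\sigma_r);$$
since each $\sigma_i\cup\sigma_r$ is again a face of $\sigma$, the inductive hypothesis applies to this family of $r$ faces and gives that $U\cap V$ is contractible as well.

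Now I would invoke the gluing theorem recalled in the Preliminaries. The inclusion $U\cap V\hookrightarrow V$ is a closed cofibration (inclusion of a subcomplex) and a homotopy equivalence (both sides are contractible), so pushing out along $U\cap V\hookrightarrow U$ shows that $U\hookrightarrow U\cup V$ is also a homotopy equivalence; hence $U\cup V\simeq U$ is contractible, completing the induction. The ``in particular'' statement follows by taking each $\sigma_i$ to be a vertex of $\sigma$, since then $\bigcup_i st_K(\sigma_i)=SC_K(\sigma)$.

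The only delicate point in the argument is the inductive step, and more specifically keeping the hypothesis ``faces of a common simplex'' stable under the operation of passing from the $\sigma_i$ to the unions $\sigma_i\cup\sigma_r$; the closure of faces of $\sigma$ under unions is precisely what makes the second identity valid and the induction self-sustaining. Everything else is either the gluing theorem or unwinding the clique property.
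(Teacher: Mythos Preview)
Your proof is correct and follows essentially the same route as the paper's: both argue by induction on $r$, use the clique hypothesis to identify $\bigcap_{v\in\tau} st_K(v)$ with $st_K(\tau)$, rewrite the intersection $U\cap V$ as a union of stars of the faces $\sigma_i\cup\sigma_r$ (still faces of $\sigma$), and then apply the gluing theorem. Your version is simply more explicit about the two star identities and about the cofibration step; the underlying argument is the same.
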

\begin{proof}
By the clique property $$\bigcap\limits_{v \in \sigma_0} st_K(v)=st_K(\sigma_0),$$ which is contractible. Then the statement is true for $r=0$. Now assume that $r$ is positive. In order to prove that the union of the complexes $$K_1=\bigcup\limits_{i=0}^{r-1} \bigcap\limits_{v \in \sigma_i} st_K(v), \ K_2=\bigcap\limits_{v \in \sigma_{r}} st_K(v)$$ is contractible, it suffices to show that each of them and the intersection are. But $$K_1\cap K_2=\bigcup\limits_{i=0}^{r-1} \bigcap\limits_{v \in \sigma_i \cup \sigma_{r}} st_K(v),$$ so by induction all three complexes $K_1, K_2$ and $K_1\cap K_2$ are contractible, and then so is $K_1 \cup K_2$.

To deduce that the star cluster of a simplex $\sigma$ in a clique complex is contractible, it suffices to take the collection $\{ \sigma_i \}$ as the set of $0$-dimensional faces of $\sigma$.
\end{proof}

%In fact it can be proved that the subcomplex of the lemma is collapsible (it is non-evasive).

In fact it can be proved that star clusters in clique complexes are collapsible. Moreover, they are non-evasive.

\begin{figure}[h] 
\begin{center}
\includegraphics[scale=0.4]{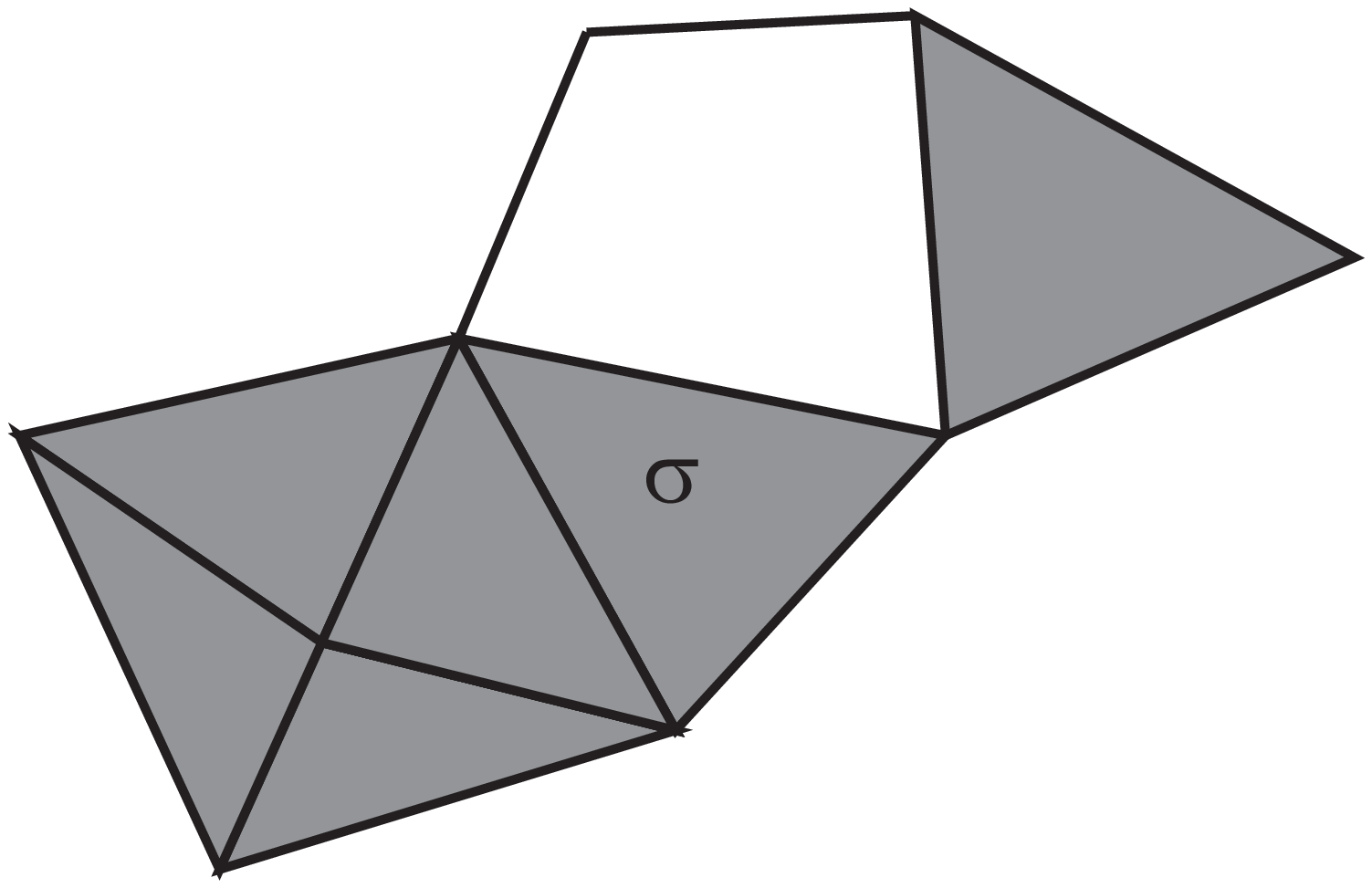} \includegraphics[scale=0.4]{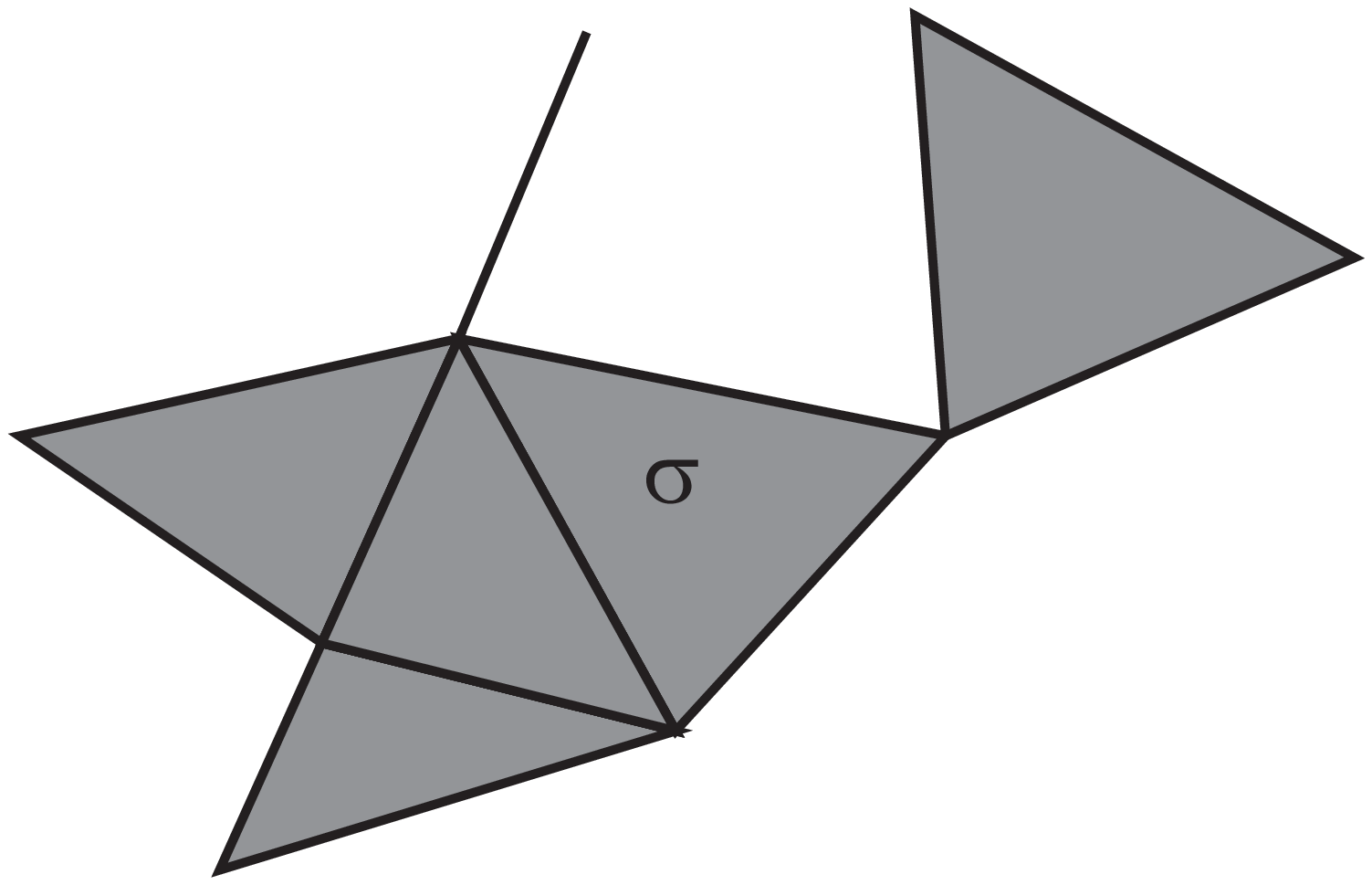}

\caption{A $2$-dimensional clique complex at the left and the star cluster of a simplex $\sigma$ at the right.}\label{starc}
\end{center}
\end{figure}

The following result is easy to prove. We give a proof for completeness.

\begin{lema} \label{dos}
Let $K$ be a complex and $K_1, K_2$ two contractible subcomplexes such that $K=K_1\cup K_2$. Then $K\simeq \Sigma (K_1\cap K_2)$. 
\end{lema}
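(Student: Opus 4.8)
The plan is to present $K=K_1\cup K_2$ as a homotopy pushout and then destroy the two contractible pieces one after the other. Concretely, I would first form the double mapping cylinder $D$ of the two inclusions $K_1\hookleftarrow K_1\cap K_2\hookrightarrow K_2$, i.e. the space obtained from $K_1\sqcup\big((K_1\cap K_2)\times[0,1]\big)\sqcup K_2$ by gluing $(x,0)$ to $x\in K_1$ and $(x,1)$ to $x\in K_2$. Since both inclusions $K_1\cap K_2\hookrightarrow K_i$ are closed cofibrations (subcomplex inclusions, as recalled in the Preliminaries), the canonical map $D\to K_1\cup_{K_1\cap K_2}K_2=K$ that collapses the cylinder coordinate is a homotopy equivalence: one writes $D$ as the pushout of $\mathrm{Cyl}(K_1\cap K_2\hookrightarrow K_1)\hookleftarrow K_1\cap K_2\hookrightarrow K_2$, observes that the deformation retraction $\mathrm{Cyl}(K_1\cap K_2\hookrightarrow K_1)\to K_1$ is a homotopy equivalence along a cofibration, and applies the gluing theorem quoted in Section 2.

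Next I would exploit contractibility. The subcomplex $K_1\subseteq D$ is contractible and its inclusion is a cofibration, so $D\simeq D/K_1$; and collapsing the copy of $K_1$ identifies $D/K_1$ with the mapping cone of the inclusion $K_1\cap K_2\hookrightarrow K_2$, namely $K_2\cup_{K_1\cap K_2}C(K_1\cap K_2)$, where $C$ denotes the cone. In this space the copy of $K_2$ is again contractible with cofibrant inclusion, so $D/K_1\simeq (D/K_1)/K_2$, and the latter quotient is precisely $C(K_1\cap K_2)/(K_1\cap K_2)$, the cone on $K_1\cap K_2$ with its base collapsed, which by definition is the unreduced suspension $\Sigma(K_1\cap K_2)$. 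Concatenating, $K\simeq D\simeq D/K_1\simeq \Sigma(K_1\cap K_2)$, as desired.

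The only delicate point, and the one I would treat most carefully, is the first step: justifying that the double mapping cylinder $D$ is homotopy equivalent to the honest union $K$. This is exactly where the cofibration hypothesis and the gluing theorem are indispensable, since without them the pushout $K_1\cup_{K_1\cap K_2}K_2$ need not compute the homotopy pushout and the conclusion can fail. Everything afterward is formal, being repeated use of the already-granted fact that collapsing a contractible subcomplex is a homotopy equivalence, together with the elementary identifications of the quotients with a mapping cone and then with a suspension. I would also note an alternative, essentially equivalent shortcut that avoids $D$: since $K_1$ is a contractible subcomplex, $K\simeq K/K_1=K_2/(K_1\cap K_2)$, and one then identifies $K_2/(K_1\cap K_2)$ with $\Sigma(K_1\cap K_2)$ using that $K_2$ is contractible and the inclusion $K_1\cap K_2\hookrightarrow K_2$ is a cofibration; this merely relocates the same use of the gluing theorem, so I would carry out the mapping-cylinder version for transparency.
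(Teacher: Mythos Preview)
Your proof is correct. Both your argument and the paper's rest on the gluing theorem and the fact that collapsing a contractible subcomplex is a homotopy equivalence, but they are organized differently. The paper builds an explicit map in the forward direction: it extends each inclusion $K_1\cap K_2\hookrightarrow K_i$ to a map from a cone $v(K_1\cap K_2)\to K_1$, $w(K_1\cap K_2)\to K_2$ (using contractibility of the $K_i$), and then applies the gluing theorem to two successive pushout squares to obtain a homotopy equivalence $\Sigma(K_1\cap K_2)\to K_1\cup w(K_1\cap K_2)\to K$. You instead work backwards through quotients: you realize $K$ as the homotopy pushout via the double mapping cylinder $D$, then collapse $K_1$ and $K_2$ in turn to reach $\Sigma(K_1\cap K_2)$. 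The two routes are essentially dual---both pass through a mapping cone of $K_1\cap K_2\hookrightarrow K_i$ as the intermediate stage---but the paper's version yields an explicit map $\Sigma(K_1\cap K_2)\to K$, while yours packages the argument in the cleaner homotopy-pushout language and makes the role of the cofibration hypothesis more transparent. Your alternative shortcut $K\simeq K/K_1=K_2/(K_1\cap K_2)\simeq \Sigma(K_1\cap K_2)$ is also valid and is perhaps the most economical formulation.
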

\begin{proof}
Since $K_1$ is contractible, the inclusion $K_1\cap K_2 \hookrightarrow K_1$ extends to a map $f:v(K_1\cap K_2) \to K_1$ from the cone $v(K_1\cap K_2)$ where $v$ is a vertex not in $K$. Analogously, there is a map $g:w(K_1\cap K_2) \to K_2$ which extends the inclusion $K_1\cap K_2 \hookrightarrow K_2$. Consider the following diagram of push-outs

\begin{displaymath}
\xymatrix@C=30pt{ K_1 \cap K_2 \ar@{^{(}->}[d] \ar@{^{(}->}^i[r] & v(K_1\cap K_2) \ar@{^{(}->}[d] \ar@{->}^{\sim}_f[r] & K_1 \ar@{->}[d]\\
		  w(K_1\cap K_2) \ar@{^{(}->}^j[r] \ar@{->}^{\wr}_g[d] & \Sigma (K_1\cap K_2) \ar@{->}^-{\sim}_-{\overline{f}}[r] & K_1\cup w(K_1\cap K_2) \ar@{->}^{\wr}_{\overline{g}}[d] \\
		  K_2 \ar@{->}[rr] & & K_1\cup K_2=K. } 
\end{displaymath}
\\[0.4cm]
Since $v (K_1\cap K_2)$ is a subcomplex of $\Sigma (K_1\cap K_2)$ and $f$ is a homotopy equivalence, by the gluing theorem $\overline{f}: \Sigma (K_1\cap K_2)\to K_1\cup w(K_1\cap K_2)$ is a homotopy equivalence. The map $\overline{f}j:w(K_1\cap K_2)\hookrightarrow K_1\cup w(K_1\cap K_2)$ is also an inclusion of a subcomplex into a complex and since $g$ is a homotopy equivalence, so is $\overline{g}$. Then the composition $\overline{g}\overline{f}$ gives a homotopy equivalence from $\Sigma (K_1\cap K_2)$ to $K$.
\end{proof}

\begin{teo} \label{triangle}
Let $G$ be a graph such that there exists a vertex $v\in G$ which is contained in no triangle. Then the independence complex of $G$ has the homotopy type of a suspension. In particular, the independence complex of any triangle-free graph has the homotopy type of a suspension.
\end{teo}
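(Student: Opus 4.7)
The plan is to apply Lemma \ref{dos} to an expression of $I_G$ as the union of two contractible subcomplexes, one built around $v$ and one built around its neighborhood. The triangle-free hypothesis on $v$ is exactly what makes the neighborhood available as a simplex of $I_G$.

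First I would let $\sigma = N(v) \subseteq V(G)$ be the (open) neighborhood of $v$. Because $v$ lies in no triangle, no two neighbors of $v$ are adjacent, so $\sigma$ is an independent set of $G$ and hence $\sigma \in I_G$ (we may assume $\sigma \neq \emptyset$; if $v$ is isolated then $I_G = st_{I_G}(v)$ is contractible, which is already the homotopy type of a suspension). Set $K_1 = st_{I_G}(v)$ and $K_2 = SC_{I_G}(\sigma)$. Both are contractible: $K_1$ is a simplicial cone, and $K_2$ is contractible by Lemma \ref{none} applied to the clique complex $I_G$.

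Next I would verify $I_G = K_1 \cup K_2$. Given an independent set $\tau$ of $G$, either $\tau \cap N(v) = \emptyset$, in which case $\tau \cup \{v\}$ is independent and $\tau \in st_{I_G}(v) = K_1$, or there exists $w \in \tau \cap N(v)$, and then $\tau \in st_{I_G}(w) \subseteq SC_{I_G}(\sigma) = K_2$. This dichotomy exhausts $I_G$, so Lemma \ref{dos} gives
\[
I_G \simeq \Sigma\bigl( st_{I_G}(v) \cap SC_{I_G}(\sigma) \bigr),
\]
as required. The triangle-free version then follows since in a triangle-free graph every vertex lies in no triangle.

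There is essentially no serious obstacle: the whole argument rests on being able to regard $N(v)$ as a simplex of $I_G$, which is precisely the hypothesis. The only subtlety worth checking carefully is that the decomposition $I_G = K_1 \cup K_2$ really covers every simplex, and the small edge case of an isolated $v$ (where $N(v)$ is empty and $SC_{I_G}(\sigma)$ is not defined in the usual way). Both are routine.
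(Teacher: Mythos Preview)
Your proposal is correct and follows essentially the same argument as the paper: the paper also sets $K_1=st_{I_G}(v)$ and $K_2=SC_{I_G}(N_G(v))$, uses Lemma~\ref{none} for the contractibility of $K_2$, verifies $I_G=K_1\cup K_2$ by the same dichotomy on whether a simplex meets $N_G(v)$, and concludes via Lemma~\ref{dos}. The only cosmetic difference is that the paper separates out the non-isolated hypothesis into a refined statement (Theorem~\ref{fuerte}) rather than treating the isolated case inline as you do.
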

This theorem will follow directly from a more refined version that we state now.

\begin{teo} \label{fuerte}
Let $G$ be a graph and let $v$ be a non-isolated vertex of $G$ which is contained in no triangle. Then $N_G(v)$ is a simplex of $I_G$ and $$I_G \simeq \Sigma (st(v) \cap SC(N_G(v))).$$ 
\end{teo}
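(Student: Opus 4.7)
The plan is to realize $I_G$ as a union of two contractible subcomplexes and then apply Lemma \ref{dos}. The two natural candidates are $K_1 = st_{I_G}(v)$ and $K_2 = SC_{I_G}(N_G(v))$, so the resulting intersection is precisely $st(v)\cap SC(N_G(v))$, matching the statement of the theorem.

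First I would verify that $N_G(v)$ is a simplex of $I_G$. Since $v$ lies in no triangle, no two neighbors of $v$ can be adjacent (otherwise they would form a triangle with $v$), so $N_G(v)$ is an independent set and hence a simplex. Because $v$ is not isolated, this simplex is non-empty, which is what we need in order for $SC(N_G(v))$ to be defined as a non-empty subcomplex. Then $K_1$ is contractible as a simplicial star, and $K_2$ is contractible by Lemma \ref{none} applied to the simplex $N_G(v)$.

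The heart of the argument is showing $I_G = K_1 \cup K_2$. Let $\sigma\in I_G$ be any simplex; I split into two cases depending on whether $\sigma$ can be extended to $v$. If $\sigma\cup\{v\}$ is independent, then $\sigma\in st(v)=K_1$. Otherwise, there exists some $w\in\sigma$ adjacent to $v$, i.e.\ $w\in N_G(v)$; since $\sigma$ is independent and contains $w$, we have $\sigma\in st(w)\subseteq SC(N_G(v))=K_2$. This covers every simplex of $I_G$, and the reverse containment is immediate.

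With $I_G = K_1\cup K_2$ expressed as a union of two contractible subcomplexes, Lemma \ref{dos} yields
\[
I_G \simeq \Sigma(K_1\cap K_2) = \Sigma\bigl(st(v)\cap SC(N_G(v))\bigr),
\]
which is the desired conclusion. I do not expect a serious obstacle here: the only subtlety is confirming that the hypothesis ``$v$ is in no triangle'' is exactly what is needed to guarantee that $N_G(v)$ is a simplex (so that Lemma \ref{none} can be applied), while the ``non-isolated'' hypothesis ensures $N_G(v)\neq\emptyset$ so that $K_2$ is genuinely a non-empty contractible subcomplex and the suspension formula is not vacuous. Theorem \ref{triangle} then follows as an immediate corollary, with the isolated-vertex case being trivial since then $I_G$ is already a cone and homotopy equivalent to the suspension of a point.
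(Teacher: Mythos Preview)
Your proposal is correct and follows essentially the same approach as the paper's own proof: verify $N_G(v)$ is a non-empty simplex, cover $I_G$ by the two contractible subcomplexes $st(v)$ and $SC(N_G(v))$ via the case split on whether $\sigma$ can be extended to $v$, and then invoke Lemma~\ref{dos}. Your remarks on the roles of the ``no triangle'' and ``non-isolated'' hypotheses, and on deducing Theorem~\ref{triangle}, are also in line with the paper.
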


\begin{proof}
Since $v$ is contained in no triangle, its neighbor set $N_G(v)$ is independent. Moreover, it is non-empty by hypothesis and then it is a simplex of $I_G$. By Lemma \ref{none}, $SC_{I_G}(N_G(v))$ is contractible.

If an independent set $\sigma$ of $G$ cannot be extended to $v$, then one of its vertices $w$ is adjacent to $v$ in $G$. Then $\sigma \in st(w) \subseteq SC(N_G(v))$. Therefore, $st(v)\cup SC(N_G(v))=I_G$ and the result follows from Lemma \ref{dos}.
\end{proof}

Since the reduced homology group $\widetilde{H}_n(\Sigma (K))$ is isomorphic to $\widetilde{H}_{n-1}(K)$, we deduce the following

\begin{coro}
If $K$ is the independence complex of a triangle-free graph, $H_1(K)$ is a free abelian group.
\end{coro}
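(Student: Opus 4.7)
The plan is to apply Theorem \ref{triangle} together with the suspension isomorphism in reduced homology, which the author has already recalled immediately before stating the corollary.

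First, I would invoke Theorem \ref{triangle}: since $K = I_G$ is the independence complex of a triangle-free graph $G$, there exists a complex $L$ with $K \simeq \Sigma(L)$. (Strictly speaking, if $G$ has an edge, Theorem \ref{fuerte} gives an explicit $L$, namely $st(v)\cap SC(N_G(v))$ for some non-isolated $v$; if $G$ has no edges, $I_G$ is a simplex and hence contractible, which is homotopy equivalent to the suspension of a point.) Homotopy equivalent spaces have isomorphic singular homology, so $H_1(K) \cong H_1(\Sigma L)$.

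Next, I would use the suspension isomorphism $\widetilde{H}_n(\Sigma L) \cong \widetilde{H}_{n-1}(L)$ applied at $n=1$. Since $H_1$ and $\widetilde{H}_1$ coincide in degrees $\geq 1$, this yields
\[
H_1(K) \;=\; \widetilde{H}_1(\Sigma L) \;\cong\; \widetilde{H}_0(L).
\]
Finally, for any space (or finite simplicial complex) $L$, the reduced zeroth homology group $\widetilde{H}_0(L)$ is a free abelian group: it is isomorphic to $\mathbb{Z}^{c-1}$ where $c$ is the number of path components of $L$ (or $0$ if $L$ is empty). Therefore $H_1(K)$ is free abelian, completing the proof.

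There is no serious obstacle here: the content is packaged entirely into Theorem \ref{triangle}, and the corollary is just the standard observation that the first homology of a suspension is the reduced zeroth homology of the base, which is always torsion-free. The only mild care needed is the edge case where $G$ has no edges (so that Theorem \ref{fuerte} does not literally apply), but this is handled trivially since $I_G$ is then contractible.
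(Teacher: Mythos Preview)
Your proof is correct and follows exactly the paper's approach: the corollary is stated immediately after the sentence ``Since the reduced homology group $\widetilde{H}_n(\Sigma (K))$ is isomorphic to $\widetilde{H}_{n-1}(K)$, we deduce the following'', so the paper's argument is precisely the suspension isomorphism applied to Theorem~\ref{triangle}, together with the fact that $\widetilde{H}_0$ is free abelian. Your additional care with the edge case of a graph with no edges is fine but not strictly necessary, since Theorem~\ref{triangle} as stated already covers any graph possessing a vertex in no triangle (isolated vertices included).
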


The following result is due to Jonsson \cite{Jon}. His original proof makes use of discrete Morse theory. Here we exhibit a proof using star clusters.

\begin{teo} [Jonsson] \label{jonsson}
For any complex $K$ there exists a bipartite graph $G$ whose independence complex $I_G$ is homotopy equivalent to $\Sigma (K)$. 
\end{teo}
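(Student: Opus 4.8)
The plan is to realize any finite complex $K$ as the deleted object in a construction that introduces exactly one ``doubled'' vertex, so that the resulting independence complex becomes a suspension over something homotopy equivalent to $K$. Concretely, given $K$ with vertex set $\{x_1,\dots,x_m\}$, I would build a graph $G$ as follows: take the vertices of $G$ to be $x_1,\dots,x_m$ together with two new vertices $a$ and $b$; for the edges, first make $a$ and $b$ non-adjacent to each other but both adjacent to every $x_i$, and then, among the $x_i$ themselves, put in exactly the edges needed so that the clique complex on $\{x_1,\dots,x_m\}$ equals the barycentric subdivision $K'$ (or some other clique complex homotopy equivalent to $K$) — using the standard trick recalled in the introduction that every finite complex is the independence complex of some graph, hence a clique complex. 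One checks that $G$ is bipartite with the two parts $\{a,b\}$ and $\{x_1,\dots,x_m\}$: indeed every edge is either $\{a,x_i\}$, $\{b,x_i\}$, or $\{x_i,x_j\}$, so as long as we arrange the $x_i$-side to be \emph{independent} in $G$, i.e.\ we want $G$ bipartite. Here is the fix: instead of putting edges among the $x_i$, replace each $x_i$ by the whole clique-structure by a different route — subdivide. So the cleaner route is: let $H$ be any graph with $I_H \simeq K$ (exists by the introduction), and form $G$ from $H$ by adding two new vertices $a,b$, each adjacent to \emph{all} vertices of $H$ and not to each other. This $G$ need not be bipartite, so one more idea is needed.

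To actually get bipartiteness, I would instead apply Theorem~\ref{fuerte} in reverse. Start from $K$; realize $K \simeq \widetilde K$ where $\widetilde K$ is a clique complex (e.g.\ $K'$), say $\widetilde K = I_H$. Now I want a bipartite graph $G$ together with a non-isolated, triangle-free vertex $v$ such that $st_{I_G}(v)\cap SC_{I_G}(N_G(v)) \simeq K$. Take $G$ to be the bipartite graph with parts $A=\{v\}\cup V(H)$ (wait, $v$ adjacent to all of $V(H)$ forces $v$'s neighborhood to carry edges)... The correct construction, which I would verify, is: let $G$ have vertex set $V(H)\sqcup V(H')\sqcup\{v\}$ where $V(H')$ is a disjoint copy $\{y_1,\dots,y_m\}$ of $V(H)=\{x_1,\dots,x_m\}$; join $v$ to every $y_j$; join $x_i$ to $y_j$ whenever $i=j$ or $\{x_i,x_j\}\in E(H)$; put no other edges. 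Then $G$ is bipartite with parts $V(H')$ and $V(H)\cup\{v\}$, the vertex $v$ lies in no triangle (its neighborhood $N_G(v)=V(H')$ is independent), and one computes that $st_{I_G}(v)\cap SC_{I_G}(N_G(v))$ is a complex built from $H$ that deformation retracts onto $I_H \simeq K$. By Theorem~\ref{fuerte}, $I_G \simeq \Sigma(st(v)\cap SC(N_G(v))) \simeq \Sigma(K)$.

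The key steps, in order: (1) recall that $K \simeq I_H$ for a suitable graph $H$ (introduction); (2) write down the bipartite graph $G$ on $V(H)\sqcup V(H')\sqcup\{v\}$ with the incidence-plus-edges adjacency described above and check it is bipartite and that $v$ is non-isolated and triangle-free, so Theorem~\ref{fuerte} applies with $N_G(v)=V(H')$; (3) identify the intersection $L:=st_{I_G}(v)\cap SC_{I_G}(N_G(v))$ combinatorially — a simplex $\tau\in I_G$ lies in $L$ iff $\tau$ avoids all $x_i$ adjacent to $v$'s neighbors appropriately, i.e.\ $\tau\cup\{v\}$ independent (so $\tau\cap V(H')=\varnothing$, i.e.\ $\tau\subseteq V(H)$, since $v$ is joined to all of $V(H')$... one needs $\tau\subseteq V(H)$ and $\tau$ extends to some $y_j$), and show $L$ collapses/retracts to $I_H$; (4) conclude via Theorem~\ref{fuerte} that $I_G\simeq\Sigma(L)\simeq\Sigma(I_H)\simeq\Sigma(K)$.

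The main obstacle I expect is step (3): getting the intersection $st(v)\cap SC(N_G(v))$ to be \emph{exactly} (or at least homotopy equivalent to) $I_H$ requires choosing the edge set between $V(H)$ and $V(H')$ with care, since one must simultaneously (a) keep $N_G(v)=V(H')$ independent, (b) ensure every vertex of $V(H)$ can be extended to $v$ — otherwise it falls into $SC(N_G(v))$ and disappears from the picture differently than intended, and (c) arrange that independent sets inside $V(H)$ that are \emph{not} independent in $H$ nonetheless do not survive in the intersection, so that $L$ sees precisely the independent sets of $H$. Balancing these three constraints is the delicate bookkeeping; I would handle it by taking the "incidence" adjacency $x_i\sim y_j \iff \{x_i,x_j\}\in E(H)$ (no diagonal), so that $\sigma\subseteq V(H)$ extends to $v$ iff $\sigma$ has a common non-neighbor among the $y_j$ iff ... and then verify $L$ is the simplicial cone structure forcing $L\simeq I_H$, with contractibility of the relevant star clusters coming for free from Lemma~\ref{none}.
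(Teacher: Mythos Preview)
Your plan has a genuine gap at step (3), and it cannot be repaired along the lines you sketch. Take $H=C_6$ with the ``no diagonal'' adjacency $x_i\sim y_j \iff \{x_i,x_j\}\in E(H)$. Then your bipartite graph on $V(H)\sqcup V(H')$ is precisely the disjoint union of two $6$-cycles (namely $x_1y_2x_3y_4x_5y_6$ and $x_2y_3x_4y_5x_6y_1$), so its independence complex is $I_{C_6}*I_{C_6}\simeq (S^1\vee S^1)*(S^1\vee S^1)\simeq \bigvee_4 S^3$. Adding the extra vertex $v$ only suspends once more. On the other hand $\Sigma(I_{C_6})\simeq \Sigma(S^1\vee S^1)\simeq S^2\vee S^2$, so $I_G\not\simeq\Sigma(I_H)$. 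Concretely, the intersection $L=st(v)\cap SC(N_G(v))$ consists of all $\tau\subseteq V(H)$ that are \emph{not} total-dominating in $H$; for $C_6$ this complex has Euler characteristic $5$ and is homotopy equivalent to $\bigvee_4 S^2$, not to $I_{C_6}$. The ``with diagonal'' variant fails too: there $L$ is the complex of non-dominating sets of $C_6$, which is the octahedral sphere with two opposite $2$-faces removed, hence $\simeq S^1\neq S^1\vee S^1$.

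The underlying problem is that your second part $V(H')$ is indexed by \emph{vertices} of $H$, so the simplices of $L$ you see are the closed non-neighbourhoods $\overline N_H(x_j)$; these need not be independent in $H$, and there is no mechanism forcing $L$ down to $I_H$. The paper's construction sidesteps this entirely: it works directly with the given complex $K$ (no auxiliary $H$), takes the second part $W$ to be the set of \emph{maximal simplices} of $K$, and joins $v\in V$ to $\sigma\in W$ exactly when $v\notin\sigma$. Then one applies Lemma~\ref{dos} (not Theorem~\ref{fuerte}) to the cover $I_G=V\cup SC(W)$, and the intersection $V\cap SC(W)$ is \emph{equal} to $K$ on the nose, since $\tau\subseteq V$ extends to $\sigma\in W$ precisely when $\tau\subseteq\sigma$. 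No deformation retract is needed.
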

\begin{proof}
Let $V$ be the set of vertices of $K$ and let $W$ be the set of maximal simplices of $K$. Take as in \cite{Jon} the bipartite graph $G$ with parts $V$ and $W$ and whose edges are the pairs $(v,\sigma)$ with $v$ a vertex of $K$ and $\sigma$ a maximal simplex such that $v\notin \sigma$. Since $V$ and $W$ are independent, they are simplices of $I_G$, and $SC(W)$ is contractible by Lemma \ref{none}. Clearly $V\cap SC(W)=K$ and by Lemma \ref{dos}, $I_G =V \cup SC(W) \simeq \Sigma (V \cap SC(W))=\Sigma (K)$.
\end{proof}

\begin{coro}
The following homotopy classes of finite complexes coincide:

(1) Independence complexes of bipartite graphs.

(2) Independence complexes of triangle-free graphs.

(3) Independence complexes of graphs that have a vertex contained in no triangle.

(4) Suspensions of finite complexes. 
\end{coro}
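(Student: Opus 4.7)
The plan is to prove the circular chain of inclusions $(1) \subseteq (2) \subseteq (3) \subseteq (4) \subseteq (1)$, which immediately forces the four homotopy-type classes to coincide. Two of these inclusions are essentially tautological, and the remaining two are exactly the two theorems already established in this section, so very little extra work is required.

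The inclusion $(1) \subseteq (2)$ is the standard combinatorial fact that every bipartite graph is triangle-free: a triangle is a cycle of odd length $3$, while bipartite graphs contain no odd cycles. Thus the very same graph witnessing class $(1)$ also witnesses class $(2)$. For $(2) \subseteq (3)$, note that in any triangle-free graph with at least one vertex, every vertex is trivially contained in no triangle, so the graph itself witnesses membership in class $(3)$. Any degenerate edge case (such as the empty graph) can be handled by hand, and a single vertex already gives a point, which is both a contractible suspension and the independence complex of a bipartite graph.

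For $(3) \subseteq (4)$, I would invoke Theorem \ref{triangle} directly: it asserts that whenever a graph $G$ contains a vertex lying in no triangle, $I_G$ has the homotopy type of a suspension of a finite complex, which is exactly the content of class $(4)$. Finally, $(4) \subseteq (1)$ is precisely Theorem \ref{jonsson}: Jonsson's construction produces, for any finite complex $K$, a bipartite graph $G$ whose independence complex satisfies $I_G \simeq \Sigma(K)$.

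Since both substantive inclusions are furnished by the two preceding theorems and the other two are immediate from the definitions, there is no real obstacle; the corollary is a purely formal consequence of Theorems \ref{triangle} and \ref{jonsson}, and the main ``content'' lies entirely in those two statements.
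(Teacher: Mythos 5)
Your proof is correct and is precisely the implicit argument the paper intends: the corollary is stated without a separate proof because it is the immediate logical consequence of combining Theorem \ref{triangle} (which gives $(3)\subseteq(4)$), Theorem \ref{jonsson} (which gives $(4)\subseteq(1)$), and the two tautological inclusions $(1)\subseteq(2)\subseteq(3)$. Your circular chain of inclusions is exactly this reasoning made explicit.
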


Given a finitely generated abelian group $\Gamma$ and an integer $n\ge 1$, there exists a finite simplicial complex $K$ such that $H_n(K)$ is isomorphic to $\Gamma$. Then for any $n\ge 2$ and any finitely generated abelian group $\Gamma$, there exist a bipartite graph $G$ such that $H_n(I_G)$ is isomorphic to $\Gamma$. In particular, the homology groups of independence complexes of triangle-free graphs can have torsion with exception of degrees $0$ and $1$. 

\bigskip

\section{Further applications} \label{sectionapplications}

Although the notion of star cluster was introduced to be used in the proof of Theorem \ref{triangle}, we will see that it is useful to attack many problems related to independence complexes. In this section we will show some new results and we will also give alternative and shorter proofs to known results.

\subsection{A criterion for contractibility}

\begin{ej}
Let $n\ge 3$ be an odd integer. Let $G$ be the graph with vertex set $\mathbb{Z}_n\times \{a,b,c\}$ and with edge set $\{((i,a),(i,b)) \ | \ i\in \mathbb{Z}_n\}\cup \{((i,b),(i,c)) \ | \ i\in \mathbb{Z}_n\}\cup \{((i,a),(i+1,a)) \ | \ i\in \mathbb{Z}_n\}\cup \{((i,c),(i+1,c)) \ | \ i\in \mathbb{Z}_n\}$ (see Figure \ref{pentagono}).

\begin{figure}[h] 
\begin{center}
\includegraphics[scale=0.4]{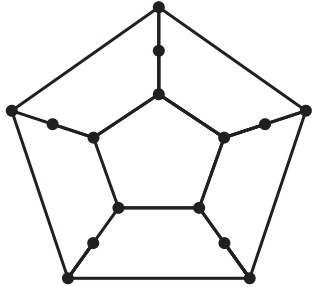}
\caption{The graph described above for $n=5$.}\label{pentagono}
\end{center}
\end{figure}

The set $\sigma= \{(i,b) \ | \ i\in \mathbb{Z}_n\}$ is independent and its star cluster coincides with the whole independence complex. Suppose that a simplex $\tau$ of $I_G$ is not in $SC(\sigma)$. Then it contains a neighbor of $(i,b)$ for every $i\in \mathbb{Z}_n$. However, $(i,b)$ and $(j,b)$ do not have common neighbors if $i\neq j$ and therefore $\tau$ has at least $n$ vertices contained in the two $n$-gons $\{((i,a),(i+1,a)) \ | \ i\in \mathbb{Z}_n\}\cup \{((i,c),(i+1,c)) \ | \ i\in \mathbb{Z}_n\}$. But then one of the two $n$-gons has $\frac{n+1}{2}$ vertices of $\tau$, and thus, two of them are adjacent in $G$. This contradicts the independence of $\tau$.
\end{ej}

The idea of the example above can be generalized in the following

\begin{obs}
Let $v_0, v_1, \ldots, v_n$ be a set of vertices in a graph $G$ such that the distance between any two of them is at least $3$. Suppose that for every collection of vertices $\{w_0,w_1, \ldots ,w_n\}$ with $w_i\in N_G(v_i)$ for every $i$, we have that there are two of them $w_i$, $w_j$, which are adjacent. Then $I_G$ is contractible.
\end{obs}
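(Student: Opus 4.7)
The plan is to generalize the argument of the preceding example, showing that the set $\sigma = \{v_0, v_1, \ldots, v_n\}$ is a simplex of $I_G$ whose star cluster exhausts all of $I_G$, and then appealing to Lemma \ref{none} to conclude contractibility.

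First I would verify that $\sigma$ is indeed an independent set. Since the pairwise distances $d_G(v_i, v_j) \geq 3 > 1$, no two $v_i$'s are adjacent, so $\sigma \in I_G$. Moreover, since $d_G(v_i, v_j) \geq 3 > 2$, the neighborhoods $N_G(v_i)$ and $N_G(v_j)$ are disjoint for $i \neq j$. This disjointness will be the key combinatorial input. By Lemma \ref{none}, $SC_{I_G}(\sigma) = \bigcup_{i=0}^n st(v_i)$ is contractible, so it suffices to prove $SC_{I_G}(\sigma) = I_G$.

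Suppose for contradiction there is a simplex $\tau \in I_G$ with $\tau \notin SC_{I_G}(\sigma)$. Then for every $i$, the set $\tau$ cannot be extended to $v_i$ (i.e.\ $\tau \notin st(v_i)$), which means there exists a vertex $w_i \in \tau$ adjacent to $v_i$, that is, $w_i \in \tau \cap N_G(v_i)$. Because the $N_G(v_i)$ are pairwise disjoint, the $w_i$ are distinct, hence form a collection of the type considered in the hypothesis. The hypothesis then yields indices $i \neq j$ with $w_i w_j$ an edge of $G$; but $\{w_i, w_j\} \subseteq \tau$ contradicts the independence of $\tau$. Therefore $SC_{I_G}(\sigma) = I_G$, and $I_G$ is contractible.

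There is no real obstacle here; the statement is tailor-made so that the hypothesis on distances secures both that $\sigma$ is a simplex and that the representative neighbors $w_i$ chosen from each $N_G(v_i)$ are automatically distinct, while the adjacency hypothesis supplies the required contradiction with independence. The only thing to be careful about is the distance-at-least-$3$ condition, which is needed in its full strength: distance $\geq 2$ would give independence of $\sigma$ but not disjointness of the neighborhoods.
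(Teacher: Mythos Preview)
Your argument is correct and matches the paper's intended reasoning: the Remark is stated without proof as a direct generalization of the preceding example, and your write-up spells out exactly that generalization (independence of $\sigma$, disjointness of the $N_G(v_i)$, and the resulting equality $SC(\sigma)=I_G$). Nothing to add.
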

%\begin{proof}
%Since the distance between any two elements in $\sigma=\{v_0, v_1, \ldots, v_n\}$ is greater than $1$, $\sigma$ is an independent set. If $\tau \in I_G$ is not in the star cluster of $\sigma$, it must contain a neighbor $w_i$ of $v_i$ for every $1\le i\le n$. Moreover the vertices $w_0,w_1, \ldots ,w_n$ are all different because of the hypothesis on the distance between vertices of $\sigma$. Therefore $\{w_0,w_1, \ldots ,w_n\}$ is not independent, which contradicts the fact that $\tau \in I_G$. Hence, $SC(\sigma)$ contains all the simplices of $I_G$. In particular $I_G=SC(\sigma)$ is contractible. 
%\end{proof}

\subsection{Cycles}

In \cite{Koz}, Kozlov proved the following result

\begin{teo}[Kozlov] \label{ciclos}
Let $C_n$ be the cycle graph on $n\ge 3$ elements with vertex set $\mathbb{Z}_n$ and edges $\{(i, i+1)\ : \ i\in \mathbb{Z}_n\}$. Then the independence complex of $C_n$ is homotopy equivalent to $S^{k-1}$ if $n=3k\pm 1$ and to $S^{k-1}\vee S^{k-1}$ if $n=3k$. 
\end{teo}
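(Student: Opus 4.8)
The plan is to prove Kozlov's theorem by induction on $n$, using the star cluster decomposition from Lemma \ref{dos} in place of the discrete Morse theory argument of the original proof. The base cases $n=3,4,5$ can be checked directly: $I_{C_3}$ is three isolated points? No — $I_{C_3}$ consists of the three vertices with no edges, so it is $S^0 \vee S^0$ (matching $n=3k$ with $k=1$, giving $S^0\vee S^0$); $I_{C_4} = \Sigma(S^0) \simeq S^1$? Actually $I_{C_4}$ is two disjoint edges, so $S^0$, matching $n=3k+1$ with $k=1$; and $I_{C_5}$ is the pentagon $C_5$ itself, which is $S^1$, matching $n=3k-1$ with $k=2$. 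So the base cases are immediate.

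For the inductive step, fix a vertex, say $0 \in \mathbb{Z}_n$. Since $C_n$ is triangle-free for $n \ge 4$, Theorem \ref{fuerte} applies: $N_{C_n}(0) = \{n-1, 1\}$ is a simplex of $I_{C_n}$ and $I_{C_n} \simeq \Sigma(st(0) \cap SC(\{n-1,1\}))$. The first step is therefore to identify this intersection $L := st_{I_{C_n}}(0) \cap SC_{I_{C_n}}(\{n-1,1\})$ explicitly. A simplex lies in $st(0)$ iff it is an independent set avoiding $1$ and $n-1$ (together with possibly $0$), and lies in $SC(\{n-1,1\})$ iff it can be extended to $1$ or to $n-1$; combining, $L$ is the subcomplex of independent sets $\tau$ not containing $1$ or $n-1$ and such that $\tau$ extends to $1$ or extends to $n-1$ — i.e. $\tau$ avoids $\{0,2\}$ or avoids $\{n-2,n-1\}$ (already knowing $1,n-1\notin\tau$). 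One then decomposes $L = L_1 \cup L_2$ where $L_1 = st(0)\cap st(1)$ (independent sets avoiding $1,2,n-1$, possibly containing $0$) and $L_2 = st(0)\cap st(n-1)$ (independent sets avoiding $1,n-2,n-1$, possibly containing $0$); each $L_i$ is a cone with apex $0$ over the independence complex of a path, and $L_1 \cap L_2$ is a cone with apex $0$ over the independence complex of the path on vertices $\{3,4,\dots,n-3\}$. So $L_1, L_2, L_1\cap L_2$ are all contractible, whence $L$ is contractible — but that would force $I_{C_n}$ contractible, which is wrong. The resolution is that this naive decomposition collapses too much; instead I would take $L_1 = st(0)$ restricted appropriately is not right either. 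The correct move is to NOT use $0$ as the suspension vertex but rather to directly run the star-cluster argument on $I_{C_n}$ with $\sigma$ a single vertex $v$ and split $I_{C_n} = st(v) \cup SC(\text{rest})$; more cleanly, one uses the standard deletion/link recursion realized topologically: removing the closed neighborhood of a vertex.

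The cleanest route — and the one I would actually take — is the following. Let $v$ be a vertex of $C_n$. Apply Theorem \ref{fuerte} to get $I_{C_n}\simeq\Sigma(st(v)\cap SC(N(v)))$. Now analyze $M:=st(v)\cap SC(N(v))$: writing $N(v)=\{a,b\}$ for the two neighbors, $M = (st(v)\cap st(a)) \cup (st(v)\cap st(b))$. Here $st(v)\cap st(a) = st(\{v,a\})$ since the complex is clique — but $\{v,a\}$ is an edge of $C_n$, hence NOT a simplex of $I_{C_n}$, so $st(\{v,a\})=\emptyset$. Thus actually $M = (st(v)\cap st(a))\cup(st(v)\cap st(b))$ where each piece is the set of independent sets $\tau$ with $\tau\cup\{v\}$ and $\tau\cup\{a\}$ both independent (the $v\in\tau$ case is excluded since $a$ is its neighbor). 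Removing $v, a$ and their neighbors from $C_n$ leaves a path $P_{n-4}$, and $st(v)\cap st(a) \cong I_{P_{n-4}}$ (no cone, because no common extension vertex survives). Similarly the other piece is $\cong I_{P_{n-4}}$, and the intersection $st(v)\cap st(a)\cap st(b)\cong I_{P_{n-5}}$ (delete $v$ and all of $N(v)$ and their neighbors). These path independence complexes have known homotopy types (contractible or spheres, by Kozlov's companion result for paths, which I would either cite or prove by the same star-cluster induction simultaneously). Feeding $I_{P_{n-4}} \simeq I_{P_{n-4}}$, joined along $I_{P_{n-5}}$, into Lemma \ref{dos}-style gluing — but now the two pieces need not be contractible, so I use the gluing theorem / Mayer–Vietoris directly rather than Lemma \ref{dos} — yields $M \simeq I_{P_{n-4}} \cup_{I_{P_{n-5}}} I_{P_{n-4}}$, and tracking the three cases $n-4 \equiv 0, \pm1 \pmod 3$ and applying $\widetilde H_*(\Sigma M)\cong\widetilde H_{*-1}(M)$ gives the three cases of the theorem after a short bookkeeping. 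The main obstacle I anticipate is precisely this bookkeeping: making the homotopy pushout $M \simeq I_{P_{n-4}}\cup_{I_{P_{n-5}}} I_{P_{n-4}}$ rigorous (the inclusion $I_{P_{n-5}}\hookrightarrow I_{P_{n-4}}$ must be understood up to homotopy, and when $I_{P_{n-5}}$ is contractible the pushout is a suspension, when it is a sphere one gets a wedge or a sphere depending on degrees), and correctly matching the residues mod $3$ of $n$, $n-4$, $n-5$ so the stated formula $S^{k-1}$ versus $S^{k-1}\vee S^{k-1}$ comes out exactly. Care is also needed for the small cases $n=5,6$ where the paths $P_{n-4}, P_{n-5}$ degenerate to a point or the empty complex, which must be treated by hand.
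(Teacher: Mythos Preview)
You start exactly as the paper does --- apply Theorem \ref{fuerte} to a vertex $v$ of $C_n$ to get $I_{C_n}\simeq\Sigma(M)$ with $M=st(v)\cap SC(N(v))$ --- but you miss the one observation that makes the proof a two-liner: $M$ is \emph{isomorphic} to $I_{C_{n-3}}$. With $v=n-2$ and $N(v)=\{n-3,n-1\}$, a simplex $\tau\in M$ must avoid $n-3$ and $n-1$ (to lie in $st(v)$) and must extend to $n-3$ or to $n-1$; since extending to either forces $n-2\notin\tau$, one gets $\tau\subseteq\{0,1,\dots,n-4\}$, independent in the induced path, and the remaining condition ``$n-4\notin\tau$ or $0\notin\tau$'' is precisely the edge closing that path into the cycle $C_{n-3}$. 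Hence $I_{C_n}\simeq\Sigma(I_{C_{n-3}})$, and induction from the base cases $n=3,4,5$ finishes immediately.

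Your decomposition $M=I_{P_{n-4}}\cup_{I_{P_{n-5}}}I_{P_{n-4}}$ is correct --- it is in fact just a Mayer--Vietoris presentation of this same $I_{C_{n-3}}$ --- but going that route forces you to establish the homotopy types of path complexes separately, to control how the inclusions $I_{P_{n-5}}\hookrightarrow I_{P_{n-4}}$ behave up to homotopy (they are not always cofibrations into contractible pieces, so Lemma \ref{dos} does not apply and you need a genuine homotopy-pushout analysis), and then to do the mod-$3$ bookkeeping you yourself flag as the main obstacle. None of that is necessary. Also, the exploratory false starts should be removed from any final write-up; in particular your claim that $\tau\in st(0)\cap st(1)$ may contain $0$ is wrong, since $0$ is a neighbor of $1$, and that error is what produced the spurious ``$L$ contractible'' conclusion.
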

\begin{proof}
Assume $n\ge 6$. Since the vertex $v=n-2$ is contained in no triangle, by Theorem \ref{fuerte}, $$I_{C_n}\simeq \Sigma(st(v)\cap SC(N(v))).$$
The simplices of $st(v)\cap SC(N(v))$ are the independent sets $\sigma$ of $C_n$ which can be extended to $n-2$ and simultaneously can be extended to $n-1$ or to $n-3$. Therefore, $st(v) \cap SC(N(v))$ is isomorphic to $I_{C_{n-3}}$. Thus, $I_{C_n}$ is homotopy equivalent to the suspension of $I_{C_{n-3}}$. The result then follows by an inductive argument analyzing the cases $n=3,4$ and $5$, which are easy to check.  
\end{proof}

In the proof we have used that suspensions of homotopy equivalent spaces have the same homotopy type. This is easy to verify. We also used that for (pointed) complexes $K$ and $L$, $\Sigma (K \vee L) \simeq \Sigma (K)\vee \Sigma (L)$. The corresponding result for reduced suspensions is trivial, and that reduced and non-reduced suspensions are homotopy equivalent follows from our first application of the gluing theorem. 

The inductive step in this proof is a particular case of Proposition \ref{subdiv4} below. 

\subsection{Forests}

A forest is a graph wich contains no cycles. In \cite[Corollary 6.1]{Ehr}, Ehrenborg and Hetyei prove that the independence complex of a forest is contractible or homotopy equivalent to a sphere. This follows from a more technical result with a long proof. In a later work \cite[Proposition 3.3]{Eng}, it is proved in a very elegant way something stronger, that such complexes are collapsible or that they collapse to the boundary of a cross-polytope. This proof relies in the following key observation (\cite[Lemma 3.2]{Eng}):

\begin{lema} [Engstr\"om] \label{engstrom}
Let $v,w$ be two different vertices of a graph $G$. If $N_G(v)\subseteq N_G(w)$, then $I_G$ collapses to $I_G \smallsetminus w$, the subcomplex of simplices not cantaining $w$.
\end{lema}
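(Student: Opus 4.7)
The plan is to construct an explicit sequence of elementary collapses from $I_G$ down to $I_G \smallsetminus w$. First I would observe that the hypothesis $N_G(v) \subseteq N_G(w)$ forces $v$ and $w$ to be non-adjacent: if $v \sim w$, then $w \in N_G(v) \subseteq N_G(w)$, contradicting the loopless assumption. In particular $\{v,w\}$ is an independent set.

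The key structural observation is that for every simplex $\sigma \in I_G$ with $w \in \sigma$, the set $\sigma \cup \{v\}$ is also a simplex of $I_G$. Indeed, every $u \in \sigma \smallsetminus \{w\}$ is non-adjacent to $w$ by independence of $\sigma$, so $u \notin N_G(w)$, and hence $u \notin N_G(v)$ by the hypothesis; combined with $v \not\sim w$ this yields that $\sigma \cup \{v\}$ is independent. This gives a canonical pairing of the simplices of $I_G$ that contain $w$: each such $\sigma$ with $v \notin \sigma$ is paired with $\sigma \cup \{v\}$, and this exhausts every simplex containing $w$, since those already containing $v$ arise as the larger member of such a pair.

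Next I would realize this pairing as an ordered sequence of elementary collapses. I would enumerate the pairs $(\sigma, \sigma \cup \{v\})$ in decreasing order of $\dim(\sigma \cup \{v\})$, breaking ties arbitrarily, and collapse them in that order. When it is time to collapse a pair with $\dim \sigma = d$, I must verify that $\sigma$ is a free face of $\sigma \cup \{v\}$ in the current complex, i.e.\ that no other proper coface of $\sigma$ survives. So let $\tau \supsetneq \sigma$ be a proper coface with $\tau \ne \sigma \cup \{v\}$; since $w \in \sigma \subseteq \tau$, the simplex $\tau$ belongs to some pair. If $v \in \tau$, write $\tau = \tau' \cup \{v\}$ with $\tau' = \tau \smallsetminus \{v\} \supsetneq \sigma$, so $\dim \tau' > d$ and the pair containing $\tau$ has already been processed. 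If $v \notin \tau$, then $\tau$ is paired with $\tau \cup \{v\}$, whose dimension is strictly larger than $d+1$, so again that pair has been processed. In either case $\tau$ has been removed, so the elementary collapse is legal.

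Once all pairs have been collapsed, the remaining subcomplex is exactly the set of simplices that do not contain $w$, which is $I_G \smallsetminus w$ by definition. The only delicate point in this argument is the bookkeeping in the order-of-collapse step, but once the decreasing-dimension ordering is chosen the case analysis reduces to the routine dichotomy above; equivalently, one can phrase the whole argument in the language of discrete Morse theory by checking that the matching $\sigma \mapsto \sigma \cup \{v\}$ is acyclic, which follows from the same dimension count.
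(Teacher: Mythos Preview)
Your proof is correct. The key observation that $N_G(v)\subseteq N_G(w)$ forces $v\not\sim w$ and that every simplex containing $w$ can be extended by $v$ is exactly right, and your verification that the matching $\sigma\mapsto\sigma\cup\{v\}$ (on simplices through $w$) can be processed in decreasing dimension as a sequence of elementary collapses is carefully done; the case split on whether $v\in\tau$ handles all surviving cofaces.

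As for comparison: the paper does not actually prove this lemma. It is quoted as \cite[Lemma 3.2]{Eng} and used as a black box; the only additional comment the paper makes is that the collapse is in fact a \emph{strong collapse} in the sense of \cite{Bar}, meaning that $w$ is a dominated vertex (every maximal simplex through $w$ also contains $v$). Your argument establishes precisely this domination in the second paragraph, so your proof in effect also yields the stronger statement, even though you phrase the conclusion as an ordinary sequence of elementary collapses.
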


In fact the collapse of the statement is a \textit{strong collapse} in the sense of \cite{Bar}. Thus, independence complexes of forests are \textit{strong collapsible} or have the \textit{strong homotopy type} of the boundary of a cross-polytope (see \cite{Bar} for definitions). The reader interested in generalizations of Lemma \ref{engstrom} is suggested to look into \cite{Bou1} and \cite{Che}. In these papers it is proved that the simple homotopy type of the clique complex of a graph is preserved under the deletion of some vertices or edges. These results can be easily translated to independence complexes. 

Here we present a different approach to Ehrenborg and Hetyei's result using star clusters.

\begin{teo} [Ehrenborg$-$Hetyei]
The independence complex of a forest is contractible or homotopy equivalent to a sphere.
\end{teo}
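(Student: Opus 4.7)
The plan is to prove the theorem by induction on the number of vertices of the forest $F$. The base case (empty forest or single vertex) is immediate: $I_F$ is either empty, a point, or a simplex, all of which are contractible (or $S^{-1}$ if we allow the empty complex). For the inductive step I want to reduce to a smaller forest and apply Theorem \ref{fuerte}.

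First I would dispose of the easy case: if $F$ has an isolated vertex $v$, then by the remark in Section 2, $I_F$ is a simplicial cone with apex $v$, hence contractible. So I may assume $F$ has no isolated vertex. Since $F$ is a forest, every connected component is a tree on at least two vertices, and therefore contains a leaf; let $v$ be such a leaf and let $w$ be its unique neighbor. Because $F$ has no cycles at all, $v$ lies in no triangle, so Theorem \ref{fuerte} applies and gives
\[
I_F \simeq \Sigma\bigl(st_{I_F}(v) \cap SC_{I_F}(N_F(v))\bigr).
\]
Since $N_F(v) = \{w\}$, the star cluster on the right collapses to $SC_{I_F}(N_F(v)) = st_{I_F}(w)$, so the space inside the suspension is simply $st_{I_F}(v)\cap st_{I_F}(w)$.

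Next I would identify this intersection combinatorially. A simplex $\sigma$ lies in $st(v)\cap st(w)$ iff both $\sigma\cup\{v\}$ and $\sigma\cup\{w\}$ are independent. Since $v$ and $w$ are adjacent, $\sigma$ can contain neither of them, and $\sigma$ must avoid every vertex of $N_F(v)\cup N_F(w)=N_F[w]$. Thus $st(v)\cap st(w)$ is precisely the independence complex $I_{F'}$ of the induced subgraph $F' = F\smallsetminus N_F[w]$. Any induced subgraph of a forest is a forest, so $F'$ is a forest strictly smaller than $F$ (it loses at least the vertices $v$ and $w$), and the inductive hypothesis applies.

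By induction, $I_{F'}$ is either contractible or homotopy equivalent to a sphere (possibly $S^{-1}$, i.e.\ the empty realization, in which case its suspension is $S^0$). Taking suspensions, $I_F \simeq \Sigma I_{F'}$ is contractible in the first case and $\simeq S^{n+1}$ in the second case. The only subtle point will be bookkeeping at the boundary: verifying that the suspension formula gives the correct answer when the residual forest $F'$ is empty (so that $\Sigma\emptyset = S^0$, matching $I_F = S^0$ when $F$ is a single edge). Aside from this minor care with conventions, there is no real obstacle: the heavy lifting has already been done by Lemma \ref{none}, Lemma \ref{dos}, and Theorem \ref{fuerte}, and the star-cluster machinery reduces what was an intricate inductive argument in \cite{Ehr} to a one-step deletion on the graph side.
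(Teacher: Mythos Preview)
Your proposal is correct and follows essentially the same route as the paper's proof: pick a leaf $v$ with unique neighbor $w$, apply Theorem~\ref{fuerte} to get $I_F\simeq\Sigma(st(v)\cap st(w))$, identify $st(v)\cap st(w)$ with the independence complex of $F\smallsetminus N_F[w]$, and induct. The only cosmetic difference is that the paper takes ``$G$ discrete'' as its base case rather than separating out the isolated-vertex case, but the substance is identical.
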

\begin{proof}
Let $G$ be a forest. If $G$ is discrete, then its independence complex is a simplex or the empty set, which are contractible and a sphere, respectively. Assume then that $G$ is not discrete. Let $v$ be a leaf of $G$ and let $w$ be its unique neighbor. By Theorem \ref{fuerte}, $I_G\simeq \Sigma (st(v)\cap st(w))$. The simplices of the complex $st(v)\cap st(w)$ are exactly those independent sets of $G$ which can be extended to both $v$ and $w$. Therefore $st(v)\cap st(w)$ is the independence complex of the subgraph $H$ of $G$ induced by the vertices different from $w$ and any of its neighbors. By an inductive argument, $I_H$ has the homotopy type of a point or a sphere, and then, so does $I_G$.
\end{proof}

At this point it may seem that the complex $st(v) \cap SC(N_G(v))$ in the statement of Theorem \ref{fuerte} is always the independence complex of some graph. However this is not the case.

Consider the graph $W$ of seven vertices of Figure \ref{siete}. The vertex $v$ is contained in no triangle. The subcomplex $st(v) \cap SC(N_G(v))$ is isomorphic to the boundary of a $2$-simplex and in particular it is not clique.

\begin{figure}[h] 
\begin{center}
\includegraphics[scale=0.4]{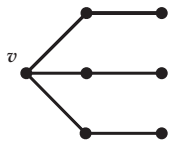}
%\begin{displaymath}
%\xymatrix{ \\
%	^v \\
%		}
%\xymatrix{ & *=0{\bullet} \ar@{-}[r] & *=0{\bullet} \\
%	*=0{\bullet} \ar@{-}[ru] \ar@{-}[r] \ar@{-}[rd] & *=0{\bullet} \ar@{-}[r] & *=0{\bullet} \\
%	& *=0{\bullet} \ar@{-}[r] & *=0{\bullet} }
%\end{displaymath}
\caption{The graph $W$.}\label{siete}
\end{center}
\end{figure}

\begin{obs} \label{inducido}
In fact, if $w$ is a non-isolated vertex of a graph $G$ which is contained in no triangle and $st(w) \cap SC(N_G(w))$ is not clique, there exists an induced subgraph $H$ of $G$ which is isomorphic to $W$ via an isomorphism $H \rightarrow W$ that maps $w$ into $v$. Moreover $st(w)\cap SC(N_G(w))$ is the independence complex of the subgraph of $G$ induced by the vertices $u\notin N_G(w)$ such that $N_G(w)\nsubseteq N_G(u)$ if there is no induced path of $G$ of length $4$ whose middle vertex is $w$.
\end{obs}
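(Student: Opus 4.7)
The plan is to characterise the complex $K:=st_{I_G}(w)\cap SC(N_G(w))$ on both the vertex and the simplex level, and then derive both assertions by a uniform ``minimal obstruction + witness'' argument. Since $w$ lies in no triangle, $N_G(w)$ is an independent set, so a subset $\sigma\subseteq V(G)$ is a simplex of $K$ exactly when $\sigma$ is $G$-independent, disjoint from $N_G(w)\cup\{w\}$, and satisfies $N_G(w)\not\subseteq N_G(\sigma):=\bigcup_{x\in\sigma}N_G(x)$. Applied to singletons, this shows that the vertex set of $K$ is $V_w:=\{u\notin N_G(w)\cup\{w\}:N_G(w)\not\subseteq N_G(u)\}$.

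For the first assertion, assume $K$ is not clique and pick $S\subseteq V_w$ of minimal cardinality among pairwise-compatible subsets that fail to be simplices of $K$. The cases $|S|\le 2$ are vacuous, so $|S|\ge 3$; moreover $S$ is $G$-independent and $N_G(w)\subseteq N_G(S)$, while minimality provides, for each $y\in S$, a witness $u_y\in N_G(w)$ not adjacent to any vertex of $S\setminus\{y\}$ and thus adjacent to $y$. The $u_y$'s are automatically pairwise distinct. Choosing three elements $y_1,y_2,y_3\in S$, I would then verify that the subgraph induced on $\{w,y_1,y_2,y_3,u_{y_1},u_{y_2},u_{y_3}\}$ has edge set exactly $\{wu_{y_i},u_{y_i}y_i:i=1,2,3\}$: the $u_{y_i}$'s are mutually non-adjacent because $w$ is in no triangle; the cross-edges $u_{y_i}y_j$ with $i\ne j$ are forbidden by the witness property; the $y_i$'s are independent; and none of them is a neighbour of $w$. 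This induced subgraph is precisely $W$, and the isomorphism sends $w$ to $v$.

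For the ``moreover'' part, set $H':=G[V_w]$. The inclusion $K\subseteq I_{H'}$ is immediate from the simplex characterisation. For the reverse inclusion I would argue by contradiction: suppose some $G$-independent $\sigma\subseteq V_w$ satisfies $N_G(w)\subseteq N_G(\sigma)$ and pick such a $\sigma$ of minimal cardinality. The very definition of $V_w$ rules out $|\sigma|=1$, so $|\sigma|\ge 2$, and minimality delivers for every $x\in\sigma$ a witness $u_x\in N_G(w)\cap N_G(x)$ non-adjacent to any other vertex of $\sigma$. Choosing two distinct $x_1,x_2\in\sigma$ then exhibits the path $x_1-u_{x_1}-w-u_{x_2}-x_2$; the triangle-freeness of $w$, the witness property and the independence of $\sigma$ rule out every potential chord, producing an induced path of length $4$ with $w$ at the centre and contradicting the hypothesis.

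The main obstacle I foresee is purely bookkeeping: ensuring distinctness of the seven candidate vertices in the first part, of the five vertices in the second, and the absence of any unintended chord in either induced subgraph. Both issues are resolved uniformly by the hypothesis that $w$ is in no triangle, which guarantees simultaneously that $N_G(w)$ is independent and that any two witnesses chosen within $N_G(w)$ are non-adjacent.
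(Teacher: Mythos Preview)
Your proof is correct. The paper itself states this Remark without proof, leaving the verification to the reader, so there is no argument to compare against. Your approach---characterising the simplices of $K=st(w)\cap SC(N_G(w))$ directly and then running a minimal-counterexample argument in both parts---is exactly the natural way to fill this gap, and all the bookkeeping (distinctness of the seven vertices in the first part and the five in the second, and the absence of chords) is handled correctly by the triangle-freeness at $w$ together with the witness property coming from minimality.

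One small remark: in your vertex-set description you write $V_w=\{u\notin N_G(w)\cup\{w\}:N_G(w)\not\subseteq N_G(u)\}$, while the paper's statement only requires $u\notin N_G(w)$. These agree because $N_G(w)\subseteq N_G(w)$ trivially, so $w$ is already excluded by the second condition; it does no harm to make the exclusion explicit as you do.
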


This remark can be used for instance to prove the following result. The idea of the proof is more interesting than the result itself.

\begin{prop}
If $G$ is a triangle-free graph with no induced paths of length $4$, then $I_G$ is homotopy equivalent to a sphere or it is contractible. 
\end{prop}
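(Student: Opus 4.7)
My plan is to prove this by induction on the number of vertices of $G$, using Theorem \ref{fuerte} together with Remark \ref{inducido} at each step. The base case handles the situation in which $G$ has no edges: then $I_G$ is either a simplex (contractible) or, if $G$ has no vertices at all, the empty complex (which one may regard as $S^{-1}$), so the conclusion holds trivially.

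For the inductive step, I would pick any non-isolated vertex $v$ of $G$. Triangle-freeness guarantees that $v$ lies in no triangle, so Theorem \ref{fuerte} applies and yields
\[
I_G \simeq \Sigma\bigl(st(v) \cap SC(N_G(v))\bigr).
\]
The key observation is that the hypothesis rules out induced paths of length $4$, so in particular $v$ is not the middle vertex of any such induced path. This is exactly the condition needed to invoke the second part of Remark \ref{inducido}, which identifies $st(v) \cap SC(N_G(v))$ with $I_H$, where $H$ is the subgraph of $G$ induced by those $u \notin N_G(v)$ with $N_G(v) \not\subseteq N_G(u)$.

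Now $H$ is an induced subgraph of $G$, so it inherits both properties: it is triangle-free and contains no induced path of length $4$. Moreover $H$ has strictly fewer vertices than $G$, since $v$ and all its neighbors have been removed. The inductive hypothesis therefore tells us that $I_H$ is contractible or homotopy equivalent to a sphere, and applying $\Sigma$ preserves this dichotomy (a suspension of a contractible space is contractible, and a suspension of $S^n$ is $S^{n+1}$). Hence $I_G$ is itself contractible or a sphere, completing the induction.

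The only mildly delicate point is matching the hypothesis of the proposition with the precise conditional in Remark \ref{inducido}, which requires the absence of induced $P_4$'s with $v$ as middle vertex (and implicitly ensures that the intersection $st(v) \cap SC(N_G(v))$ is clique, ruling out the obstruction posed by the graph $W$). Once that identification is made, the proof is a straightforward recursion, and I do not expect any genuine obstacle beyond careful bookkeeping of the base cases.
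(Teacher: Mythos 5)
Your proof is correct and follows exactly the route the paper has in mind: the paper presents this proposition immediately after Remark \ref{inducido} with the comment that ``this remark can be used for instance to prove the following result,'' and your inductive argument via Theorem \ref{fuerte} and the second part of Remark \ref{inducido} is precisely that intended argument. One small point of bookkeeping you handle implicitly but correctly: the induced subgraph $H$ in the remark excludes $v$ itself (since $N_G(v)\subseteq N_G(v)$ trivially) as well as all neighbors of $v$, so the vertex count does strictly drop, as needed for the induction.
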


\subsection{Edge subdivision and Alexander dual}

Recall that the Alexander dual $K^*$ of a simplicial complex $K$ ($K$ not a simplex) with vertex set $V$ is a simplicial complex whose simplices are the proper subsets $\sigma$ of $V$ such that $V\smallsetminus \sigma \notin K$.

Let $G$ be a graph. We denote by $G'$ the subdivision of $G$ obtained when we subdivide each edge of $G$ inserting a new vertex on it.

The relationship between the independence complex of a graph and the one of its subdivision is given by the following Theorem of Csorba (\cite[Theorem 6]{Cso}).

\begin{teo} [Csorba]
Let $G$ be a non-discrete graph. Then $I_{G'}$ is homotopy equivalent to $\Sigma ((I_G) ^*)$.
\end{teo}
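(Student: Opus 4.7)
The plan is to mimic the argument used in the proof of Theorem \ref{jonsson}. The subdivided graph $G'$ is bipartite with parts $V=V(G)$ and $E=E(G)$, its edges being exactly the incidence pairs $\{v,e\}$ with $v$ an endpoint of $e$ in $G$. In particular, both $V$ and $E$ are independent sets of $G'$ and therefore simplices of $I_{G'}$. The non-discreteness hypothesis is used precisely to ensure $E\neq \emptyset$, so that $SC_{I_{G'}}(E)$ is contractible by Lemma \ref{none}; the subcomplex $V$, being a full simplex, is trivially contractible as well.

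Next I would verify that $I_{G'} = V \cup SC(E)$. Any independent set $\sigma$ of $G'$ splits uniquely as $\sigma=A\sqcup B$ with $A\subseteq V$ and $B\subseteq E$. If $B=\emptyset$, then $\sigma\subseteq V$ lies in the simplex $V$. Otherwise there is some $e\in B$, and since $e\in \sigma$ we have $\sigma\cup\{e\}=\sigma\in I_{G'}$, so $\sigma\in st_{I_{G'}}(e)\subseteq SC(E)$. Thus the two contractible subcomplexes $V$ and $SC(E)$ cover $I_{G'}$.

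The key computation is to identify the intersection. A simplex $\sigma$ belongs to $V\cap SC(E)$ iff $\sigma\subseteq V$ and $\sigma\cup\{e\}$ is independent in $G'$ for some edge $e=\{u,w\}\in E$, which happens exactly when $u,w\notin \sigma$, i.e.\ when $V\setminus \sigma$ contains an edge of $G$, equivalently $V\setminus \sigma\notin I_G$. Because $G$ has at least one edge, $V$ itself is a vertex cover of $G$, so this condition automatically forces $\sigma\subsetneq V$. This is precisely the definition of $(I_G)^*$, and so $V\cap SC(E)=(I_G)^*$. Applying Lemma \ref{dos} then yields $I_{G'}\simeq \Sigma(V\cap SC(E))=\Sigma((I_G)^*)$, as required.

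I do not anticipate any serious obstacle. The whole argument is a direct adaptation of the proof of Theorem \ref{jonsson}, with $V$ and $E$ (the two sides of the bipartite graph $G'$) playing the roles of $V$ and $W$ there. The only point that must be watched is the hypothesis that $G$ be non-discrete: it is needed both to guarantee $E\neq \emptyset$ (so Lemma \ref{none} applies to $SC(E)$) and to ensure that $I_G$ is not a simplex, so that $(I_G)^*$ is defined at all.
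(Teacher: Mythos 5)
Your proof is correct and follows essentially the same route as the paper: view $G'$ as bipartite with parts $V$ and the set of subdivision vertices, cover $I_{G'}$ by the contractible subcomplexes $V$ and $SC$ of the other part, identify the intersection with $(I_G)^*$, and apply Lemma \ref{dos}. The only (welcome) addition is your explicit remark that non-discreteness also guarantees $(I_G)^*$ is well-defined.
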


Csorba's proof relies on the Nerve lemma \cite[Theorem 10.6]{Bjo}, but we exhibit here an alternative simpler proof using the tools developed so far.

\begin{proof}
Let $V$ be the set of vertices of $G$. Subdividing $G$ adds a new set of vertices $W$, with one element $v_{ab}$ for each edge $(a,b)$ of $G$. Thus, the graph $G'$ is bipartite with parts $V$ and $W$. By Lemma \ref{dos}, $$I_{G'}=V \cup SC(W)\simeq \Sigma (V\cap SC(W)).$$
The simplices of $V\cap SC(W)$ are the subsets $\sigma$ of $V$ which can be extended to some vertex of $W$.  However, $\sigma \subseteq V$ can be extended to $v_{ab}\in W$ if and only if $a,b\notin \sigma$. Hence, $\sigma \subseteq V$ is in $V\cap SC(W)$ if and only if there exists an edge $(a,b)$ of $G$ such that $a,b\in V\smallsetminus \sigma$, which is equivalent to saying that $V\smallsetminus \sigma$ is not independent or, in other words, $\sigma \subsetneq V$ and $V\smallsetminus \sigma \notin I_G$. Thus $V\cap SC(W)=(I_G)^*$ and the Theorem follows.     
\end{proof}

Another result dealing with subdivisions of edges is the following (\cite[Theorem 11]{Cso})

\begin{prop} [Csorba] \label{subdiv4}
Let $G$ be a graph and $e$ an edge of $G$. Let $H$ be the graph obtained from $G$ by subdividing the edge $e$ in four parts. Then $I_H\simeq \Sigma (I_G)$.
\end{prop}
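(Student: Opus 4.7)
The plan is to apply Theorem \ref{fuerte} to the ``middle'' vertex of the subdivided edge, and then identify the resulting intersection complex with $I_G$.

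Write $e=(a,b)$ and let $v_1,v_2,v_3$ be the three new vertices introduced by the subdivision, so that $e$ is replaced in $H$ by the path $a-v_1-v_2-v_3-b$. The key observation is that $v_2$ is a non-isolated vertex of $H$ whose open neighbourhood $N_H(v_2)=\{v_1,v_3\}$ is independent (there is no edge between $v_1$ and $v_3$), so $v_2$ is contained in no triangle. Theorem \ref{fuerte} therefore gives
\[ I_H \simeq \Sigma\bigl(st_{I_H}(v_2)\cap SC_{I_H}(\{v_1,v_3\})\bigr). \]

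The bulk of the argument is then to identify the complex $L := st_{I_H}(v_2)\cap SC_{I_H}(\{v_1,v_3\})=st_{I_H}(v_2)\cap\bigl(st_{I_H}(v_1)\cup st_{I_H}(v_3)\bigr)$ with $I_G$. A set $\sigma$ lies in $L$ iff $\sigma$ is independent in $H$, can be extended by $v_2$ (forcing $v_1,v_3\notin\sigma$), and can be extended by $v_1$ or by $v_3$. Extendability by $v_1$ forces $v_2,a\notin\sigma$; extendability by $v_3$ forces $v_2,b\notin\sigma$. Collecting these conditions, $\sigma\in L$ iff $\sigma\subseteq V(G)$, $\sigma$ is independent in $H$ restricted to $V(G)$ (which is just $G$ minus the edge $e$), and $\{a,b\}\not\subseteq\sigma$. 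The last two conditions together say exactly that $\sigma$ is an independent set of $G$, since $\{a,b\}\not\subseteq\sigma$ is automatic when $\sigma$ is independent in $G$ and, conversely, prevents the removed edge $e$ from being violated. Hence $L\cong I_G$, and combining with the display above yields $I_H\simeq\Sigma(I_G)$.

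The only step requiring care is the identification $L\cong I_G$: one must be careful to treat correctly the membership of the ``old'' endpoints $a,b$ and the ``new'' vertices $v_1,v_2,v_3$ in $\sigma$, and to verify that no edge of $H$ other than those lying inside $G\setminus e$ can be violated by a set $\sigma\subseteq V(G)$. This is a straightforward bookkeeping exercise rather than a real obstacle, so the proof is quite short once Theorem \ref{fuerte} is available.
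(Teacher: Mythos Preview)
Your proof is correct and follows exactly the same approach as the paper: apply Theorem \ref{fuerte} to the middle vertex $v_2$ of the new path and identify $st(v_2)\cap SC(N_H(v_2))$ with $I_G$. The paper's proof is a one-line sketch referring back to the cycle computation (Theorem \ref{ciclos}), whereas you spell out the bookkeeping for the identification $L\cong I_G$; the content is the same.
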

\begin{proof}
The idea is the same as in Theorem \ref{ciclos}. When the edge $e$ is replaced by a path of length $4$, three new vertices appear. The vertex $v$ in the middle of this path is contained in no triangle and $st(v)\cap SC(N_H(v))$ is isomorphic to $I_G$.
\end{proof}

From this result it is easy to compute inductively the homotopy types of independence complexes of paths (cf. \cite[Proposition 4.6]{Koz}). If $G$ is a path, $I_G$ is contractible or homotopy equivalent to a sphere.

\subsection{Homology groups of relations}

One result that is impossible not to mention when working with complexes associated to bipartite graphs, is Dowker's Theorem \cite{Dow}. Given finite sets $X$, $Y$ and a relation $\mathcal{R}\subseteq X\times Y$, two complexes are considered. The simplices of the complex $K_X$ are the non-empty subsets of $X$ which are related to a same element of $Y$. Symmetrically, the complex $K_Y$ is defined. A Theorem of C.H. Dowker \cite[Theorem 1]{Dow}, states that $K_X$ and $K_Y$ have isomorphic homology and cohomology groups. In fact they are homotopy equivalent, and moreover, simple homotopy equivalent (see \cite{Bar3}). We deduce Dowker's Theorem from our ideas of star clusters applied to bipartite graphs.

\begin{teo}[Dowker]
Let $\mathcal{R}$ be a relation between two finite sets $X$ and $Y$. Then $H_n(K_X)$ is isomorphic to $H_n(K_Y)$ and $H^n(K_X)$ is isomorphic to $H^n(K_Y)$ for every $n\ge 0$.  
\end{teo}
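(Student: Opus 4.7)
The plan is to produce a single bipartite graph $G$ whose independence complex decomposes, in two different ways, as a union of two contractible subcomplexes whose intersections are exactly $K_X$ and $K_Y$; Lemma \ref{dos} will then give $\Sigma(K_X) \simeq I_G \simeq \Sigma(K_Y)$, and the suspension isomorphism in (co)homology will finish the job.

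Concretely, I would take $G$ to be the bipartite graph with vertex set $X \sqcup Y$ whose edges are the pairs $(x,y) \in X\times Y$ with $(x,y) \notin \mathcal{R}$. Since there are no edges inside $X$ or inside $Y$, both $X$ and $Y$ are independent sets of $G$, hence simplices of $I_G$; in particular each of them determines a contractible subcomplex of $I_G$. The subcomplexes $SC(X)$ and $SC(Y)$ are contractible by Lemma \ref{none}.

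Next, I would check the two decompositions $I_G = X \cup SC(Y) = Y \cup SC(X)$. For the first one, any simplex $\sigma \in I_G$ meeting $Y$ in some vertex $y$ lies in $st(y) \subseteq SC(Y)$, while any simplex of $I_G$ disjoint from $Y$ is a subset of $X$, hence a face of the simplex $X$. The key computation is that of the intersection: a non-empty $\sigma \subseteq X$ lies in $X \cap SC(Y)$ iff $\sigma$ can be extended to some $y \in Y$, iff some $y \in Y$ is non-adjacent in $G$ to every element of $\sigma$, iff some $y \in Y$ satisfies $(x,y) \in \mathcal{R}$ for every $x \in \sigma$; and this is exactly the condition for $\sigma$ to be a simplex of $K_X$. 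Thus $X \cap SC(Y) = K_X$, and symmetrically $Y \cap SC(X) = K_Y$.

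Applying Lemma \ref{dos} to the two decompositions yields
\[
\Sigma(K_X) \;\simeq\; I_G \;\simeq\; \Sigma(K_Y),
\]
and then the suspension isomorphisms $\widetilde{H}_n(\Sigma L) \cong \widetilde{H}_{n-1}(L)$ and $\widetilde{H}^n(\Sigma L) \cong \widetilde{H}^{n-1}(L)$ give $\widetilde{H}_n(K_X) \cong \widetilde{H}_n(K_Y)$ and $\widetilde{H}^n(K_X) \cong \widetilde{H}^n(K_Y)$ for every $n \ge 0$; the statements for unreduced (co)homology follow because both $K_X$ and $K_Y$ are non-empty whenever $\mathcal{R}$ is (and the case $\mathcal{R} = \emptyset$ is trivial). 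The only genuine subtlety is choosing the right convention for the edges of $G$ --- edges must correspond to \emph{non}-relations so that independent sets encode $\mathcal{R}$-compatibility --- and carefully verifying the intersection formula $X \cap SC(Y) = K_X$; beyond that, the proof is a direct application of the star cluster machinery already developed.
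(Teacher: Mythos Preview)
Your proposal is correct and follows essentially the same route as the paper's proof: the same bipartite graph $G$ on $X\sqcup Y$ with edges the non-related pairs, the same two decompositions $I_G = X \cup SC(Y) = SC(X)\cup Y$, and the same identification $X\cap SC(Y)=K_X$, $SC(X)\cap Y=K_Y$ followed by Lemma~\ref{dos} and the suspension isomorphism. You simply spell out more of the verifications (and the degenerate cases) than the paper does.
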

\begin{proof}
We may assume that $X$ and $Y$ are non-empty. Let $G$ be the bipartite graph with parts $X$ and $Y$ and where $x\in X$ is adjacent to $y\in Y$ if $x$ is not related to $y$. By Lemma \ref{dos}, $\Sigma (X\cap SC(Y))\simeq I_G \simeq \Sigma (SC(X)\cap Y)$. On the other hand it is clear that $X\cap SC(Y)=K_X$ and $SC(X)\cap Y=K_Y$. Therefore $\Sigma (K_X)$ and $\Sigma (K_Y)$ are homotopy equivalent and in particular have isomorphic homology and cohomology groups. Then, the latter is true also for $K_X$ and $K_Y$.
\end{proof}

\subsection{Kneser graph}

Let $n\ge 1$ and $k\ge 0$ be two integer numbers. The vertices of the \textit{Kneser graph} $KG_{n,k}$ are the $n$-subsets of the integer interval $\{1, \ldots , 2n+k\}$ and the edges are given by pairs of disjoint subsets. The famous Kneser conjecture formulated in 1955 by Martin Kneser states that the chromatic number of the graph $KG_{n,k}$ is $k+2$. For twenty three years this problem remained open, until L\'aszl\'o Lov\'asz managed to give finally a proof. His argument is based on a topological result known as the Lusternik-Schnirelmann Theorem. This result which involves coverings of the sphere is equivalent to the Borsuk-Ulam Theorem. As mentioned in the introduction, Lov\'asz used the neighborhood complex to turn the combinatorial data of the graph into the topological setting. A key step in his proof is Theorem \ref{lov} in Section \ref{sectionintro}, which establishes a relationship between connectivity properties of the neighborhood complex and chromatic properties of the graph. We will now study not neighborhood complexes but the topology of independence complexes of some Kneser graphs. We will explicitly compute the homotopy type of these complexes in the particular case $n=2$. In this section we will not derive any results in connection with chromatic numbers of graphs. However, the relationship between colorability and independence complexes will be analyzed in Section \ref{sectionls} of the paper, where the names of L. Lusternik and L. Schnirelmann will reappear in connection to the \textit{LS-category}, which is also related to coverings of spaces. 

The so called \textit{stable Kneser graph} $SG_{n,k}$ is the subgraph of $KG_{n,k}$ induced by the \textit{stable subsets}, i.e. subsets containing no consecutive elements (nor $1$ and $2n+k$). In \cite{Bra}, Braun studies the homotopy type of the independence complex of the stable Kneser graph for $n=2$ and proves that for $k\ge 4$, $I_{SG_{2,k}}$ is homotopy equivalent to a wedge of $2$-dimensional spheres (see \cite[Theorem 1.4]{Bra}). His proof uses discrete Morse theory. Here we show a similar result for the non-stable case.

\begin{teo} \label{kneser}
Let $k\ge 0$. Then the independence complex of $KG_{2,k}$ is homotopy equivalent to a wedge of $\binom{k+3}{3}$ spheres of dimension two.
\end{teo}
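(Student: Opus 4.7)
Plan: Set $N = k+4$ and identify the vertices of $KG_{2,k}$ with the edges $e_{ij}$ of the complete graph $K_N$ on $\{1,\ldots,N\}$. A family of $2$-subsets is independent in $KG_{2,k}$ iff its members pairwise intersect, and every pairwise-intersecting family of $2$-subsets is classically either a \emph{star} consisting of all $2$-subsets containing a fixed vertex $i$, or a \emph{triangle} $\{\{a,b\},\{b,c\},\{a,c\}\}$. Hence the maximal simplices of $K := I_{KG_{2,k}}$ are the $N$ stars $S_i := \{e_{ij} : j \neq i\}$ (each of dimension $k+2$) together with the $\binom{N}{3}$ triangle $2$-simplices.

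I would apply the star cluster construction to $\sigma := S_N$. By Lemma \ref{none}, $SC(\sigma) = \bigcup_{j<N} st(e_{jN})$ is contractible, and an independent set $\tau$ belongs to $st(e_{jN})$ iff every edge of $\tau$ meets $\{j,N\}$. A short case analysis then classifies what lies in $SC(\sigma)$: every $S_i$ lies in $SC(\sigma)$ (take $j = i$ if $i < N$, any $j < N$ if $i = N$), and every triangle involving $N$, say $\{e_{ab},e_{bN},e_{aN}\}$, lies in $st(e_{aN})$. What escapes $SC(\sigma)$ is precisely the family of triangle $2$-simplices $\{e_{ab},e_{bc},e_{ac}\}$ with $\{a,b,c\} \subseteq \{1,\ldots,N-1\}$, since for such a triangle no single $j$ can satisfy $j \in \{a,b\}\cap\{b,c\}\cap\{a,c\} = \emptyset$. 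There are exactly $\binom{N-1}{3} = \binom{k+3}{3}$ of these.

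For each such exceptional triangle, all three $1$-faces still lie in $SC(\sigma)$: for example $\{e_{ab},e_{bc}\} \in st(e_{bN})$ via the common endpoint $b$. Therefore $K$ is obtained from the contractible subcomplex $SC(\sigma)$ by attaching $\binom{k+3}{3}$ $2$-cells along their boundary circles. Since $SC(\sigma) \hookrightarrow K$ is a cofibration into a CW-complex and $SC(\sigma)$ is contractible, the quotient map is a homotopy equivalence $K \simeq K/SC(\sigma)$ (as noted in the Preliminaries). The quotient collapses each boundary $\partial\Delta^2$ to the basepoint, producing the wedge $\bigvee_{\binom{k+3}{3}} S^2$, which is the desired conclusion.

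The main obstacle is purely bookkeeping: one must correctly classify pairwise-intersecting families of $2$-subsets, then carefully track which simplices do and do not lie in $SC(\sigma)$, and verify that the boundaries of the exceptional triangles remain inside. Beyond Lemma \ref{none} and the standard contractible-subcomplex collapse from the Preliminaries, no deeper input seems to be needed.
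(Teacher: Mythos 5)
Your proof is correct and takes essentially the same route as the paper: pick the maximal ``star'' simplex through a fixed ground element, use Lemma \ref{none} to get a contractible star cluster, observe that the only simplices outside it are the $\binom{k+3}{3}$ triangle $2$-cells avoiding that element (with their boundaries inside), and collapse. The paper centers at the element $1$ while you center at $N=k+4$, an immaterial relabeling.
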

\begin{proof}
The simplices of the independence complex $I$ are given by sets of pairwise intersecting $2$-subsets of $[k+4]$. Thus, the maximal simplices of $I$ are of the form $\sigma _a=\{ \{a,b\} \}_{b\neq a}$ for some $1\le a \le k+4$ or of the form $\tau _{a,b,c}= \{ \{a,b\}, \{a,c\}, \{b,c\} \}$ for some distinct $a,b,c\in [k+4]$.
The star cluster $SC(\sigma _1)$ contains all the simplices $\sigma _a$ because $\sigma _a \in st(\{1,a\}) \subseteq SC(\sigma _1)$ for every $a\neq 1$. Moreover $\tau _{1,a,b} \in SC(\sigma _1)$ for any $a,b$. However if $a,b,c$ are different from $1$, the simplex $\tau _{a,b,c}$ is not in $SC(\sigma _1)$, although its boundary is. Therefore, $I$ is obtained from $SC(\sigma _1)$ attaching $2$-cells, one for each triple $\{a,b,c\}$ of elements different from $1$. The quotient $I / SC(\sigma _1)$ is a wedge of $\binom{k+3}{3}$ spheres of dimension two, and since $SC(\sigma _1)$ is contractible, it is homotopy equivalent to $I$.
\end{proof}

\subsection{Square grids}

Let $n,m$ be two non-negative integers. The graph $G(n,m)$ is defined as follows. The vertices are the points $(x,y)$ of the plane with integer coordinates such that $-x\le y\le x$ and $x-m\le y\le -x+n$. Two vertices are adjacent if their distance is $1$. Similarly, the vertices of the graph $H(n,m)$ are the points of $\mathbb{Z}^2$ such that $-x-1\le y\le x$ and $x-m\le y \le -x+n-1$, and again adjacent vertices correspond to points at distance $1$. It is proved in \cite[Theorem 6]{Bou2} that the homotopy type of the complexes $I_{G(n,m)}$ and $I_{H(n,m)}$ is the one of a sphere or a point. The original proof uses discrete Morse theory although there is a very simple argument based on Lemma \ref{engstrom}. This nice idea by Cukic and Engstr\"om is explained in the final remark of \cite{Bou2}. Just as another example we give an alternative proof of this result which is an application of star clusters.

Given non-negative integers $n,m$ and $k$, consider the subgraph $\widetilde{G}(n,m,k)$ of $G(n,m)$ induced by the points which satisfy $y\ge -x+k$ or $y\le x-3$. It is easy to see that $\widetilde{G}(n,m,0)=G(n,m)$ and that $\widetilde{G}(n,m+3,k)$ is isomorphic to $H(n,m)$ if $k>n$. Analogously, $\widetilde{H}(n,m,k)$ is the subgraph of $H(n,m)$ induced by the points satisfying $y \ge -x+k-1$ or $y\le x-3$. Therefore $\widetilde{H}(n,m,0)=H(n,m)$ and $\widetilde{H}(n,m+3,k)$ is isomorphic to $G(n,m)$ if $k>n$.

\begin{lema} \label{indu}
If $k<n$ and $m\neq 0$, then $$I_{\widetilde{G}(n,m,k)}\simeq \Sigma (I_{\widetilde{G}(n,m,k+3)})$$ and $$I_{\widetilde{H}(n,m,k)}\simeq \Sigma (I_{\widetilde{H}(n,m,k+3)}).$$
\end{lema}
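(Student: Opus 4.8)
The plan is to run the very same inductive step that was used for cycles (Theorem~\ref{ciclos}) and for the four-fold subdivision of an edge (Proposition~\ref{subdiv4}): inside $\widetilde{G}(n,m,k)$ I will produce a vertex $v$ lying in no triangle for which $st(v)\cap SC(N(v))$ is isomorphic to $I_{\widetilde{G}(n,m,k+3)}$, and then invoke Theorem~\ref{fuerte}. To set things up, note that $G(n,m)$, $H(n,m)$ and all the graphs $\widetilde{G}(n,m,k)$, $\widetilde{H}(n,m,k)$ are induced subgraphs of the unit-distance graph on $\mathbb{Z}^2$, which is bipartite (colour a vertex by the parity of $x+y$) and hence triangle-free; so every non-isolated vertex is a legitimate choice for $v$, and $N(v)$ is automatically an independent set, i.e. a simplex of the independence complex. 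It is convenient to change coordinates to $p=x+y$, $q=x-y$: then $G(n,m)$ consists of the lattice points with $p\equiv q\pmod 2$ inside the rectangle $\{0\le p\le n,\ 0\le q\le m\}$, adjacency becomes the four diagonal moves $(p,q)\mapsto(p\pm1,q\pm1)$, and the two inequalities defining $\widetilde{G}(n,m,k)$ read ``$p\ge k$'' and ``$q\ge 3$''. The combinatorial point is then that $\widetilde{G}(n,m,k+3)$ is exactly the subgraph of $\widetilde{G}(n,m,k)$ induced on the complement of the ``strip'' $D:=\{(p,q)\in G(n,m):k\le p\le k+2,\ 0\le q\le 2\}$.

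I then choose $v$ on the central anti-diagonal $p=k+1$ of the strip $D$; the parity condition $p\equiv q\pmod 2$ forces $q=1$ if $k$ is even and $q=0$ if $k$ is odd. That this $v$ is a non-isolated vertex of $\widetilde{G}(n,m,k)$ is precisely where the hypotheses are used: $k<n$ guarantees $p=k+1\le n$, and $m\neq 0$ leaves room in the $q$-direction for $v$ (or at least for a neighbour of $v$). With $v$ fixed, Theorem~\ref{fuerte} gives $I_{\widetilde{G}(n,m,k)}\simeq\Sigma\big(st(v)\cap SC(N(v))\big)$, so everything reduces to identifying $st(v)\cap SC(N(v))$ with $I_{\widetilde{G}(n,m,k+3)}$.

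For that identification, recall that a simplex $\sigma$ belongs to $st(v)\cap SC(N(v))$ iff $\sigma$ is independent, can be extended to $v$, and can be extended to some neighbour $w$ of $v$; the last clause forces $v\notin\sigma$, so in effect $\sigma$ must avoid the closed neighbourhood $N[v]=\{v\}\cup N(v)$ and must avoid $N(w)$ for some $w\sim v$. One checks that the union of these forbidden sets, intersected with the actual vertex set of $\widetilde{G}(n,m,k)$, is exactly the strip $D$. When $k$ is even the closed neighbourhood of $v$ already equals $D$, and $v$ has a neighbour $w_0$ whose relevant forbidden set lies inside $D$, so the ``extend to a neighbour'' clause is automatic. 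When $k$ is odd the closed neighbourhood of $v$ covers all of $D$ but one vertex $u$, while $u$ lies in $N(w)$ for every neighbour $w$ of $v$, so the clause additionally forces $\sigma$ to avoid $u$; conversely any $\sigma$ avoiding $D$ can be extended to $v$ and to a suitable neighbour of $v$. In either case the simplices of $st(v)\cap SC(N(v))$ are precisely the independent sets contained in $V\big(\widetilde{G}(n,m,k)\big)\setminus D=V\big(\widetilde{G}(n,m,k+3)\big)$, so $st(v)\cap SC(N(v))=I_{\widetilde{G}(n,m,k+3)}$ and hence $I_{\widetilde{G}(n,m,k)}\simeq\Sigma\big(I_{\widetilde{G}(n,m,k+3)}\big)$. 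The statement for $\widetilde{H}$ follows in exactly the same way, the only changes being that the ambient rectangle is $\{-1\le p\le n-1,\ 0\le q\le m\}$ and that the central anti-diagonal of the strip is $p=k$ instead of $p=k+1$.

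I expect the genuine work to be not the topology — Theorem~\ref{fuerte} supplies that directly — but the bookkeeping in the previous paragraph: confirming that $v$ exists and is non-isolated, and that the equality $st(v)\cap SC(N(v))=I_{\widetilde{G}(n,m,k+3)}$ survives all degenerate shapes of $D$, especially when $k$ is near the ends of its range ($k=0$ or $k=n-1$) and when $m=1$, in which cases $D$ collapses from a ``cross'' or a $4$-cycle down to a path. In each such case one must re-examine which purported neighbours of $v$ are actually vertices of $\widetilde{G}(n,m,k)$ and verify that the forbidden sets still carve out precisely $D$.
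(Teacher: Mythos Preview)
Your approach is correct and follows the paper's strategy: apply Theorem~\ref{fuerte} at a vertex $v$ inside the strip $D=\{k\le p\le k+2,\ 0\le q\le 2\}$ and identify $st(v)\cap SC(N(v))$ with $I_{\widetilde{G}(n,m,k+3)}$. The only real difference is in the choice of $v$ and in how the identification is verified. The paper takes $v=([\tfrac{k+1}{2}],[\tfrac{k+1}{2}])$, which in your $(p,q)$ coordinates sits on the boundary edge $q=0$; there $v$ has degree at most $2$ in $\widetilde{G}(n,m,k)$, so it is never the middle vertex of an induced path of length $4$, and Remark~\ref{inducido} applies directly to give $st(v)\cap SC(N(v))$ as the independence complex of $\{u\notin N(v): N(v)\nsubseteq N(u)\}$, which one checks equals $\widetilde{G}(n,m,k+3)$. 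Your centre vertex for even $k$ \emph{can} be the middle of an induced $P_4$ (e.g.\ $(k-1,3)-(k,2)-v-(k+2,2)-(k+3,1)$ when $k\ge 1$ and $m,n$ are large enough), so Remark~\ref{inducido} is unavailable and you are forced into the parity-split computation by hand. The paper's low-degree boundary vertex buys a uniform one-line argument that also absorbs the degenerate shapes of $D$ you flag at the end; your choice makes the equality $N[v]=D$ transparent in the even case at the price of that extra bookkeeping.
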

\begin{proof}
The condition $k<n$ and $m\neq 0$ ensures that the vertex $v=([\frac{k+1}{2}],[\frac{k+1}{2}])$ is in $\widetilde{G}(n,m,k)$ and it is not isolated. Since it is contained in no triangle, $I_{\widetilde{G}(n,m,k)}\simeq \Sigma(st(v)\cap SC(N_{\widetilde{G}(n,m,k)}(v)))$.

\begin{figure}[h] 
\begin{center}
\includegraphics[scale=0.4]{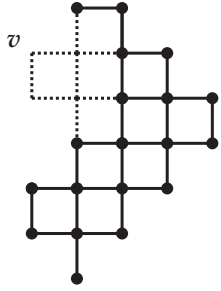}

\caption{The graph $\widetilde{G}(7,6,3)$. The complex $st(v)\cap SC(N(v))$ is the independence complex of the subgraph $\widetilde{G}(7,6,6)$ induced by the round vertices.}
\end{center}
\end{figure}

The vertex $v$ is not the middle vertex of an induced path of length $4$, therefore by Remark \ref{inducido}, $st(v)\cap SC(N_{\widetilde{G}(n,m,k)}(v))$ is the independence complex of the subgraph induced by the vertices $w$ which are not adjacent to $v$ and such that there is some neighbor of $v$ which is not adjacent to $w$. This graph is exactly $\widetilde{G}(n,m,k+3)$. The assertion for $\widetilde{H}$ follows from a similar argument.
\end{proof}  

\begin{prop}
Let $n,m,k \ge 0$. Then $I_{\widetilde{G}(n,m,k)}$ is contractible or homotopy equivalent to a sphere. The same is true for $I_{\widetilde{H}(n,m,k)}$.
\end{prop}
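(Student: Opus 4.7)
The plan is to establish both statements simultaneously by strong induction, with Lemma \ref{indu} as the principal reduction and the isomorphisms $\widetilde{G}(n,m+3,k)\cong H(n,m)$ and $\widetilde{H}(n,m+3,k)\cong G(n,m)$ (valid for $k>n$) to decrease $m$ whenever $k$ exceeds $n$. A convenient measure is the lexicographic pair $(m,|V|)$, where $|V|$ is the number of vertices of the graph in question: Lemma \ref{indu} keeps $m$ fixed while strictly decreasing $|V|$ (the proof of the lemma already exhibits a vertex $v$ in $\widetilde{G}(n,m,k)$ that no longer satisfies the $\widetilde{G}(n,m,k+3)$-condition), and the isomorphisms strictly decrease $m$ at the cost of switching between $\widetilde{G}$- and $\widetilde{H}$-types.

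For the inductive step on $\widetilde{G}(n,m,k)$ with $m\geq 1$, I would split on $k$. If $k<n$, Lemma \ref{indu} yields $I_{\widetilde{G}(n,m,k)}\simeq \Sigma I_{\widetilde{G}(n,m,k+3)}$; the target has strictly smaller $|V|$ at the same $m$, so the inductive hypothesis applies, and the class of spheres-and-contractible-complexes is closed under suspension. If $k>n$ and $m\geq 3$, the isomorphism identifies $\widetilde{G}(n,m,k)$ with $\widetilde{H}(n,m-3,0)$, and the inductive hypothesis applies since $m-3<m$. The symmetric argument handles $\widetilde{H}(n,m,k)$.

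The degenerate base cases are: $m=0$, where the defining inequalities place all vertices of $\widetilde{G}(n,0,k)$ (resp.\ $\widetilde{H}(n,0,k)$) on a single diagonal with pairwise Euclidean distances at least $\sqrt{2}$, so the graph is discrete and its independence complex is a simplex; and $k>n$ with $m\in\{1,2\}$, where the joint constraints $x-m\leq y$ and $y\leq x-3$ are inconsistent, so the graph is empty.

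The main obstacle is the residual case $k=n$ (with $m\geq 1$), to which neither Lemma \ref{indu} nor the isomorphism applies. My proposal is to exhibit an isolated vertex: the upper-leftmost vertex of the top edge $y=-x+n$, namely $v=(\lceil n/2\rceil,\, n-\lceil n/2\rceil)$, lies in $\widetilde{G}(n,m,n)$ for every $m\geq 1$, and a case-by-case inspection of its four lattice neighbours $v\pm(1,0)$ and $v\pm(0,1)$ shows that each one either violates a $G(n,m)$-inequality (typically $y\leq -x+n$ or $y\leq x$) or, when it does belong to $G(n,m)$, fails both clauses of the $\widetilde{G}$-condition $y\geq -x+n$ or $y\leq x-3$. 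Hence $v$ is isolated, $I_{\widetilde{G}(n,m,n)}$ is a simplicial cone over the rest, and therefore contractible. The analogous upper-leftmost vertex of the edge $y=-x+n-1$ proves the same for $I_{\widetilde{H}(n,m,n)}$, completing the induction.
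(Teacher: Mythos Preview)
Your argument is correct and follows essentially the same route as the paper: a simultaneous induction that first decreases by $m$ (via the isomorphisms $\widetilde{G}(n,m,k)\cong \widetilde{H}(n,m-3,0)$ for $k>n$) and then uses Lemma \ref{indu} for $k<n$, with the boundary case $k=n$ handled by the isolated vertex $(\lceil n/2\rceil,\lfloor n/2\rfloor)=([\frac{n+1}{2}],[\frac{n}{2}])$---exactly the vertex the paper uses. The only cosmetic difference is that you order the induction by $(m,|V|)$ rather than $(m,n-k)$; since passing from $k$ to $k+3$ strictly shrinks the vertex set, this is equivalent. One small omission: in the discrete/empty base cases you should note that when the graph is empty the independence complex is the empty set, i.e.\ $S^{-1}$, so it still falls under ``sphere''.
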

\begin{proof}
We prove both statements simultaneously by induction, first in $m$ and then in $n-k$. If $m=0$, $\widetilde{G}(n,m,k)$ is discrete. If it is non-empty, its independence complex is a simplex and otherwise it is a $-1$ dimensional sphere. Assume then that $m>0$. If $n-k<0$, then $\widetilde{G}(n,m,k)$ is empty when $m=1,2$ and it is isomorphic to $H(n,m-3)=\widetilde{H}(n,m-3,0)$ when $m\ge 3$. Thus the case $n-k<0$ follows by induction. Suppose then that $k\le n$.

If $k=n$, the vertex $v=([\frac{n+1}{2}],[\frac{n}{2}])\in \widetilde{G}(n,m,k)$ is isolated, and then the independence complex is contractible. We can assume that $m,n$ and $k$ satisfy the hypothesis of Lemma \ref{indu}. Therefore $I_{\widetilde{G}(n,m,k)}\simeq \Sigma (I_{\widetilde{G}(n,m,k+3)})$ and by induction, it has the homotopy type of a sphere or a point.

Similarly the same is true for $I_{\widetilde{H}(n,m,k)}$.
\end{proof}

In particular when $k=0$ we obtain the result of \cite{Bou2}.

\begin{coro} [Bousquet-M\'elou$-$Linusson$-$Nevo]
The independence complexes of the graphs $G(n,m)$ and $H(n,m)$ are contractible or homotopy equivalent to a sphere.
\end{coro}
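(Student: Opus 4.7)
The plan is essentially to observe that this corollary is an immediate specialization of the preceding proposition. Recall that just before Lemma \ref{indu}, the graphs $\widetilde{G}(n,m,k)$ and $\widetilde{H}(n,m,k)$ were defined so that in particular $\widetilde{G}(n,m,0) = G(n,m)$ and $\widetilde{H}(n,m,0) = H(n,m)$. Therefore, applying the preceding proposition with $k=0$ directly yields that $I_{G(n,m)}$ and $I_{H(n,m)}$ are each either contractible or homotopy equivalent to a sphere.

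In short, the proof is a one-line invocation of the previous proposition, so there is no genuine obstacle here: all the work has been done in establishing the proposition via Lemma \ref{indu} and the double induction. The only thing to check is the bookkeeping that $k=0$ indeed falls within the scope of the proposition (which allows any $n,m,k \ge 0$), and the definitional identification $\widetilde{G}(n,m,0) = G(n,m)$, $\widetilde{H}(n,m,0) = H(n,m)$. I would therefore write the proof in a single sentence, stating that taking $k=0$ in the previous proposition yields the claim.
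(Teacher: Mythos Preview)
Your proposal is correct and matches the paper's approach exactly: the paper also derives the corollary simply by taking $k=0$ in the preceding proposition, noting the identifications $\widetilde{G}(n,m,0)=G(n,m)$ and $\widetilde{H}(n,m,0)=H(n,m)$.
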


\subsection{Order complexes}

The order complex of a finite poset $P$ is the simplicial complex whose simplices are the non-empty chains of $P$. Order complexes are clique and therefore it is possible to use our results to study them.

\begin{ej}
The order complex of the poset whose Hasse diagram is in Figure \ref{hasse1} is contractible.
\begin{figure}[h] 
\begin{center}
\includegraphics[scale=0.4]{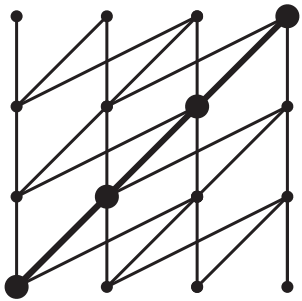}

\caption{A poset with a chain that intersects all the maximal chains.}\label{hasse1}
\end{center}
\end{figure}
The points of the diagonal constitute a chain $\sigma$ which intersects any maximal chain of the poset. In other words, the order complex of $P$ is the star cluster of $\sigma$.
\end{ej}

The following result summarizes the idea of the example.

\begin{prop}
Let $P$ be a finite poset. Suppose that there exists a chain of $P$ which intersects any maximal chain. Then the order complex of $P$ is contractible.
\end{prop}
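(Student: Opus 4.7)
The plan is to show that the order complex $\Delta(P)$ coincides with the star cluster $SC(\sigma)$ of the given chain $\sigma$, and then invoke Lemma \ref{none} to conclude contractibility. Since a subset of $P$ is a chain if and only if its elements are pairwise comparable, the order complex $\Delta(P)$ is a clique complex (its $1$-skeleton is the comparability graph of $P$, and chains are exactly the cliques). This puts us in the setting where Lemma \ref{none} applies, so once we identify $\Delta(P)$ with a star cluster we are done.

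First I would verify that $\sigma$ is indeed a simplex of $\Delta(P)$, which is immediate since $\sigma$ is a chain by assumption. Then, given any simplex $\tau \in \Delta(P)$, i.e., any chain $\tau \subseteq P$, I would extend $\tau$ to a maximal chain $\tau'$ of $P$ (this is possible because $P$ is finite). By hypothesis $\sigma \cap \tau' \neq \emptyset$, so there is some $v \in \sigma \cap \tau'$. Then $\tau \cup \{v\} \subseteq \tau'$ is a chain, i.e., a simplex of $\Delta(P)$, which shows $\tau \in st_{\Delta(P)}(v) \subseteq SC(\sigma)$. Since this holds for every simplex of $\Delta(P)$, we get $\Delta(P) \subseteq SC(\sigma)$, and the reverse inclusion is trivial.

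There is essentially no obstacle here; the only subtlety is the extension of an arbitrary chain to a maximal one, which is automatic in the finite setting. With $\Delta(P) = SC(\sigma)$ established, Lemma \ref{none} applied to the clique complex $\Delta(P)$ and the simplex $\sigma$ yields that $\Delta(P)$ is contractible, completing the proof.
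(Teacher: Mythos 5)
Your proof is correct and is essentially the same argument the paper has in mind: the paper states the proposition immediately after an example whose point is precisely that the order complex equals $SC(\sigma)$ for a chain $\sigma$ meeting every maximal chain, and contractibility then follows from Lemma \ref{none} since order complexes are clique. You have simply spelled out the details (extending $\tau$ to a maximal chain and picking $v \in \sigma \cap \tau'$) that the paper leaves implicit.
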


In fact it can be proved that in the hypothesis of the proposition, the poset $P$ is dismantlable, or equivalently that the order complex is strong collapsible, which is something stronger (see \cite{Bar}). Order complexes appear in problems of different areas of mathematics, like algebraic topology, combinatorics, group theory and discrete geometry. They allow to establish the relationship between the homotopy theory of simplicial complexes and of finite topological spaces \cite{Bar2}. 
  
\bigskip

\section{Matching complexes and claw-free graphs} \label{sectionmatchings}

The paper \cite{Ehr}, that we have already discussed, concludes with a question about the homotopy type of a polyhedron called \textit{Stirling complex}.  

\begin{defi}
Let $n\ge 2$. The vertices of the \textit{Stirling complex} $Stir _n$ are the pairs $(i,j)$ with $1\le i<j \le n$. The simplices are given by sets of vertices which pairwise differ in the first and in the second coordinate. 
\end{defi}

The number of $k$-dimensional simplices of $Stir _n$ is the Stirling number of second kind $S(n,n-k-1)$ (see \cite[Proposition 2.4.2]{Sta}). In our attempt to attack this problem, we will prove a general result on the connectivity of a certain class of well-known complexes.

Given a graph $G$, its \textit{matching complex} $M(G)$ is defined as the simplicial complex whose simplices are the non-trivial matchings of $G$, that is, the non-empty collections of edges which are pairwise non-adjacent. It is easy to see that matching complexes are independence complexes of graphs. Precisely, $M(G)=I_{\mathfrak{E}(G)}$, where $\mathfrak{E}(G)$ denotes the \textit{edge graph} (or \textit{line graph}) of $G$. The vertices of $\mathfrak{E}(G)$ are the edges of $G$ and its edges are given by adjacent edges of $G$. In the last twenty years, two (classes of) matching complexes were particularly studied. One is the matching complex $M_n$ of the complete graph $K_n$. The other, known as the \textit{chessboard complex} $M_{n,m}$, is the independence complex of the complete bipartite graph $K_{n,m}$. The vertices of the chessboard complex $M_{n,m}$ can be considered as the squares of an $n\times m$ chessboard and its simplices as the sets of squares which can be occupied by rooks in such a way that no rook attacks another. Note that the Stirling complex $Stir _n$ also is a matching complex and it is closely related to $M_{n,m}$. The difference is that in $Stir _n$ the rooks are only allowed to be above the diagonal.

Some of the most important results obtained in relation to the homotopy of the spaces $M_n$ and $M_{n,m}$ are about connectivity and existence of torsion in homology groups. Bounds for the connectivity were proved by Bj\"orner, Lov\'asz, Vre\'cica, $\check{\textrm{Z}}$ivaljevi\'c \cite{Bjo2} and Bouc \cite{Bou3}.

\begin{teo} [Bj\"orner$-$Lov\'asz$-$Vre\'cica$- \check{\textrm{Z}}$ivaljevi\'c, Bouc] \label{sharp}
Let $n,m$ be positive integers. Then $M_n$ is $[\frac{n-5}{3}]$-connected and $M_{n,m}$ is $min\{n-2,m-2, \frac{n+m-5}{3}\}$-connected. 
\end{teo}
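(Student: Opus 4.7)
The plan is to prove both bounds by a strong induction using the star cluster machinery. First, observe that the line graph $\mathfrak{E}(G)$ of any simple graph is claw-free (three edges of $G$ incident to a common edge $e$ must, by pigeonhole on the two endpoints of $e$, contain two that share a second endpoint, hence are adjacent in $\mathfrak{E}(G)$). Thus both $M_n = I_{\mathfrak{E}(K_n)}$ and $M_{n,m} = I_{\mathfrak{E}(K_{n,m})}$ are independence complexes of claw-free graphs, so Theorem \ref{conn} already yields some connectivity, namely $\lfloor(\dim-2)/2\rfloor$. This is tight only for small parameters and is weaker than Bouc's bound for large $n$, so the induction has to be doing more work than a single application of Theorem \ref{conn} can provide.

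For $M_n$, the shape of the bound $\lfloor(n-5)/3\rfloor$ suggests an induction with step $n\mapsto n+3$ that raises connectivity by exactly one. The natural implementation is to realise $M_n$, up to a sufficiently high-order equivalence, as a suspension of $M_{n-3}$. Concretely, I would fix three vertices $a,b,c$ of $K_n$, decompose $M_n$ according to which of $a,b,c$ each matching uses, and cover the pieces by star clusters of carefully chosen matchings in $M_n$ (for instance, $\{\{a,b\}\}$ and $\{\{a,c\}\}$). By Lemma \ref{none} each of these subcomplexes is contractible, so Lemma \ref{dos} (applied iteratively through a sequence of Mayer--Vietoris-type reductions) would reduce the connectivity question for $M_n$ to one for the intersection, which should be identifiable with $M_{n-3}$ (matchings avoiding $a,b,c$) after the suspension is accounted for.

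For $M_{n,m}$, the bound has two regimes. The linear bounds $n-2$ and $m-2$ reflect a column/row argument: fix a row of the chessboard, use that the stars of the squares in that row have a contractible union (since they form the star cluster of a matching consisting of pairwise disjoint edges running through that row and distinct columns, so Lemma \ref{none} applies), and check directly that its complement contains only cells of sufficiently high dimension. The bound $(n+m-5)/3$ follows by the same three-step suspension recursion as for $M_n$, applied simultaneously to $n$ and $m$ and reducing $M_{n,m}$ to $M_{n-1,m-2}$ (or $M_{n-2,m-1}$) via star clusters of edges chosen across both parts.

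The main obstacle is that the subcomplex of matchings touching a distinguished vertex is a union $\bigcup_i\mathrm{st}(\{i,n\})$ over the edges incident to that vertex, and these edges pairwise share $n$, so they are pairwise \emph{non-adjacent} in $M_n$ and do not form a simplex. Lemma \ref{none} therefore does not apply to that union directly, and one must either work with star clusters of genuine matchings (which cover several vertex-stars at once) or combine several Mayer--Vietoris sequences to extract the connectivity bound. Tracking through these reductions that the relevant intersection has the connectivity predicted by the inductive hypothesis is where the bulk of the bookkeeping lies, and it is the reason the original arguments of Bouc and Björner--Lovász--Vrećica--Živaljević are substantially more involved than the direct applications of star clusters elsewhere in this paper.
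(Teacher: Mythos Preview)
This theorem is not proved in the paper at all: it is quoted as a known result, with the proofs attributed to Bj\"orner--Lov\'asz--Vre\'cica--\v{Z}ivaljevi\'c and to Bouc via the references. Immediately after Corollary~\ref{conn2} the paper computes explicitly that the star cluster method yields only $[\frac{n-6}{4}]$-connectivity for $M_n$ and $[\frac{n-3}{2}]$-connectivity for $M_{n,m}$ (with $n\le m$), and states that these are \emph{not optimal} in contrast with Theorem~\ref{sharp}. So there is no ``paper's own proof'' to compare against, and the paper in fact asserts that its machinery falls short of this statement.

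As for your proposal itself, it is a sketch of a strategy rather than a proof, and it does not close. The heart of the matter is exactly what you flag in your last paragraph: to run an induction of the shape $M_n \rightsquigarrow \Sigma(M_{n-3})$ you would need the subcomplex of matchings touching a fixed vertex to be a star cluster of a simplex, but the edges through that vertex are pairwise adjacent in $\mathfrak{E}(K_n)$ and hence do \emph{not} form an independent set, so Lemma~\ref{none} does not apply. You acknowledge this obstruction but do not overcome it; the phrases ``should be identifiable with $M_{n-3}$'' and ``combine several Mayer--Vietoris sequences'' are precisely the missing argument. The original proofs go via quite different tools (a nerve/shellability argument in \cite{Bjo2}, and a long exact sequence coming from the link/deletion decomposition in \cite{Bou3}), and there is no indication that star clusters alone can recover the sharp $\lfloor\frac{n-5}{3}\rfloor$ bound.
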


It was conjectured in \cite{Bjo2} that the bounds given by this result are in fact optimal. Some cases were first established by Bouc \cite{Bou3} but the complete result was obtained by Shareshian and Wachs \cite{Sha}.    

In this section we will prove a general bound for the connectivity of a matching complex. In fact we will show a stronger result, regarding independence complexes of \textit{claw-free graphs}. In particular we will apply this to study the Stirling complex. We will also use these ideas to prove two results on the connectivity of independence complexes of general graphs.

\begin{defi}
A graph is called \textit{claw-free} if it has no induced subgraph isomorphic to the complete bipartite graph $K_{1,3}$. 
\end{defi}

The following result is due to Engstr\"om \cite[Theorem 3.2]{Eng2}.

\begin{teo} [Engstr\"om] \label{engclaw}
Let $G$ be a claw-free graph with $n$ vertices and maximum degree $m$. Then $I_G$ is $[\frac{2n-1}{3m+2}-1]$-connected.
\end{teo}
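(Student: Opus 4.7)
The plan is to proceed by strong induction on $n$. For the base cases, if $G$ is discrete then $I_G$ is a simplex (hence contractible, and the bound is vacuous) or empty; small values of $n$ relative to $m$ give a vacuous bound since $\lfloor \tfrac{2n-1}{3m+2}-1 \rfloor \le -1$ means ``nonempty'' which is automatic.

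For the inductive step, I would pick a vertex $v \in G$ and decompose $I_G = st_{I_G}(v) \cup del_{I_G}(v)$, where $del(v)$ is the subcomplex of simplices not containing $v$. Here $st(v) = v * lk(v) = v * I_{G-N_G[v]}$ is a cone, hence contractible, while $del(v) = I_{G-v}$ and $st(v)\cap del(v) = lk(v) = I_{G-N_G[v]}$. The gluing theorem (as already exploited in Lemmas \ref{none} and \ref{dos}) expresses $I_G$ as the mapping cone of the inclusion $I_{G-N_G[v]} \hookrightarrow I_{G-v}$, and the associated long exact sequence gives
$$\operatorname{conn}(I_G) \;\ge\; \min\bigl\{\operatorname{conn}(I_{G-N_G[v]}) + 1,\; \operatorname{conn}(I_{G-v})\bigr\}.$$
Since $G-v$ and $G-N_G[v]$ are induced subgraphs of $G$, they are claw-free with maximum degree at most $m$, and the induction hypothesis applies with vertex counts $n-1$ and $n-1-\deg(v)$.

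To make the arithmetic match $\lfloor \tfrac{2n-1}{3m+2}-1\rfloor$, the naive vertex-by-vertex deletion is too wasteful: deleting a single $v$ reduces the numerator by $2$ but loses a unit of connectivity, while the bound only allows losing a unit of connectivity for every $(3m+2)/2$ vertices removed. The natural fix, and the step I expect to be the main obstacle, is to use a richer star-cluster decomposition: pick an edge $\{u,v\}$ and write $I_G = SC(\{u,v\}) \cup R$, where $SC(\{u,v\}) = st(u)\cup st(v)$ is contractible by Lemma \ref{none}. The claw-free hypothesis enters here through the structural fact that $\alpha(G[N_G(v)])\le 2$, which controls the independent sets meeting $N_G[u]\cup N_G[v]$ and, after suitable combinatorial bookkeeping, allows one to identify the relevant ``residual'' subcomplex with the independence complex of an induced subgraph on roughly $n-(3m+2)$ vertices.

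The real difficulty is organizing this residual piece so that each inductive step truly eliminates $\Theta(m)$ vertices while losing only $O(1)$ connectivity. I expect to proceed by choosing $u$, $v$ and a careful further collection of ``witness'' vertices (neighbors realizing the maximum independent sets in $N_G(u)$ and $N_G(v)$, which exist by claw-freeness) so that the removed set has size at most $3m+2$. The connectivity estimate then follows by combining the cofiber sequence above with a second application of the gluing theorem to the decomposition $I_G = SC(\{u,v\}) \cup R$, yielding $\operatorname{conn}(I_G) \ge \operatorname{conn}(SC(\{u,v\}) \cap R) + 1$ and reducing to the induction hypothesis on a smaller claw-free graph.
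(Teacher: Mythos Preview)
The paper does not prove Theorem~\ref{engclaw}; it is quoted verbatim from Engstr\"om~\cite[Theorem~3.2]{Eng2} and used only as a point of comparison with the paper's own Theorem~\ref{conn}. So there is no ``paper's proof'' to match here.

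As for the proposal itself: it is an outline with the decisive step left undone. You correctly identify that the naive one-vertex deletion fails the arithmetic and that the claw-free hypothesis enters via $\alpha(G[N(v)])\le 2$. But the crucial move---producing an induced claw-free subgraph on at most $n-\lceil(3m+2)/2\rceil$ vertices whose independence complex governs the connectivity of $I_G$ up to one suspension---is only asserted, not carried out. Writing $I_G = SC(\{u,v\})\cup R$ does not help unless you specify $R$ and show that $SC(\{u,v\})\cap R$ (or $R$ itself) is the independence complex of a suitable induced subgraph; in general the complement of a star cluster is not even a subcomplex, and the intersection $SC(\{u,v\})\cap R$ need not be an independence complex. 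The sentence ``after suitable combinatorial bookkeeping, allows one to identify the relevant residual subcomplex with the independence complex of an induced subgraph on roughly $n-(3m+2)$ vertices'' is precisely where the content of Engstr\"om's argument lives, and you have not supplied it.

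For reference, Engstr\"om's own proof does not use star clusters at all (his paper predates this one). It proceeds by a careful induction exploiting the cofibre sequence for $I_{G\setminus N[v]}\hookrightarrow I_{G\setminus v}$ together with a second deletion using the claw-free structure of $N(v)$; the constant $3m+2$ emerges from bounding $|N[v]\cup N[u]|$ for adjacent $u,v$ and iterating. If you want to recover his bound via the machinery of this paper, you would need to replace the vague ``witness vertices'' step with an explicit lemma of that type.
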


The maximum degree of a graph is the maximum degree of a vertex of the graph. This improves a similar result for general graphs which says that $I_G$ is $[\frac{n-1}{2m}-1]$-connected (see \cite{Eng}). More results about connectivity and homotopy of independence complexes of graphs with bounded maximum degree will be given in Section \ref{sectionsuspension}. 
The main result of this section is the following bound for the connectivity of independence complexes of claw-free graphs which does not depend on the maximum degree $m$.

\begin{teo} \label{conn}
Let $G$ be a claw-free graph. Then $I_G$ is $[\frac{dim(I_G)-2}{2}]$-connected.
\end{teo}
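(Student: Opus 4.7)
The plan is to choose a facet $\sigma$ of $I_G$ of maximal dimension, apply Lemma \ref{none} to collapse the contractible subcomplex $SC(\sigma)$, and then use the claw-free hypothesis to force every remaining simplex to have large dimension. Set $d=\dim(I_G)$ and pick an independent set $\sigma\subseteq V(G)$ with $|\sigma|=d+1$. Since $SC(\sigma)$ is contractible and is a subcomplex of $I_G$, the quotient map $I_G \to I_G/SC(\sigma)$ is a homotopy equivalence. The quotient carries a natural CW structure with a single $0$-cell (the image of $SC(\sigma)$) and one $k$-cell for every $k$-simplex $\tau$ of $I_G$ not contained in $SC(\sigma)$.

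The decisive step is to prove that every such $\tau$ satisfies $\dim(\tau)\geq \lceil (d+1)/2\rceil - 1$. Recall that $\tau\in SC(\sigma)$ iff $\tau\cup\{v\}$ is independent for some $v\in\sigma$; equivalently, $\tau\notin SC(\sigma)$ iff $\tau\cap\sigma=\emptyset$ and every vertex $v\in\sigma$ has at least one neighbor in $\tau$. This is the only place the claw-free condition enters: each $w\in\tau$ can be adjacent to at most two vertices of $\sigma$, because three such neighbors would lie in the independent set $\sigma$ and would produce, together with $w$, an induced $K_{1,3}$ at $w$. Double counting the edges between $\sigma$ and $\tau$ from both sides then yields
\[
 d+1 = |\sigma| \leq \sum_{w\in\tau} |N_G(w)\cap \sigma| \leq 2|\tau|,
\]
hence $|\tau|\geq \lceil (d+1)/2\rceil$ as required.

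Finally, $I_G \simeq I_G/SC(\sigma)$ is a CW complex whose only cell in dimensions strictly below $\lceil(d+1)/2\rceil -1$ is the basepoint, so by cellular approximation it is $\bigl(\lceil (d+1)/2\rceil -2\bigr)$-connected. A routine parity check shows that $\lceil(d+1)/2\rceil - 2 = \lfloor (d-2)/2\rfloor$ in both parities of $d$, which gives the stated bound. I do not expect any serious obstacle: the star cluster machinery from Section 3 reduces the topological problem to the above combinatorial estimate, and that estimate is essentially the only place where the claw-free hypothesis is needed. The one point requiring care is the degenerate case where $\sigma$ is actually a vertex cover of the neighborhood structure, so that no simplex outside $SC(\sigma)$ exists; in that case $I_G=SC(\sigma)$ is contractible and the bound holds trivially.
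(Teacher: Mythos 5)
Your proof is correct and follows essentially the same strategy as the paper: take a maximum independent set $\sigma$, show via the claw-free condition that low-dimensional simplices of $I_G$ lie in the contractible subcomplex $SC(\sigma)$, and conclude connectivity. The paper phrases the final step via the long exact sequence of the pair $(I_G,SC(\sigma))$ together with the observation that $SC(\sigma)$ contains the $\lfloor(d-2)/2\rfloor$-skeleton, whereas you collapse $SC(\sigma)$ and invoke cellular approximation on the quotient; the paper itself notes (just after Proposition~\ref{mascon}) that these two packagings are interchangeable, so this is not a genuinely different route. Your double-counting bound $d+1\le 2|\tau|$ is the exact contrapositive of the paper's inequality $2(r+1)\le d$.
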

\begin{proof}
Let $\sigma$ be an independent set of $G$ of maximum cardinality $d+1=dim(I_G)+1$. Suppose that $\tau\in I_G$ and $r=dim(\tau)\le [\frac{d-2}{2}]$. Since $G$ is claw-free and $\sigma$ is independent, every vertex of $\tau$ is adjacent to at most two vertices of $\sigma$. Since $2(r+1)\le d<d+1$, there is a vertex of $\sigma$ which is not adjacent to any vertex of $\tau$. Therefore the independent set $\tau$ can be extended to some vertex of $\sigma$, which means that $\tau \in SC(\sigma)$.
Since $SC(\sigma)$ contains the $[\frac{d-2}{2}]$-skeleton of $I_G$, the relative homotopy groups $\pi _k (I_G, SC(\sigma))$ are trivial for $k\le [\frac{d-2}{2}]$. The result now follows from the contractibility of $SC(\sigma)$ and the long exact sequence of homotopy groups of the pair $(I_G, SC(\sigma))$, $$ \ldots \rightarrow \pi _k (SC(\sigma)) \rightarrow \pi _k (I_G) \rightarrow \pi _k (I_G, SC(\sigma)) \rightarrow \ldots $$       
\end{proof}

This bound and Theorem \ref{engclaw} give different information. In same cases the number $[\frac{2n-1}{3m+2}-1]$ is smaller than $[\frac{dim(I_G)-2}{2}]$ and in others it is bigger. 

Since the homology of a complex is trivial for degrees greater than its dimension and since the homology groups of degree less than or equal to $k$ are trivial for $k$-connected complexes by the Hurewicz theorem, from Theorem \ref{conn} we deduce the following

\begin{coro}
Let $G$ be a claw-free graph. Then there exists an integer $n$ such that the support of the reduced homology of $I_G$ lies in the interval $\{[\frac{n}{2}], \ldots ,n\}$.
\end{coro}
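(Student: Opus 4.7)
The plan is to take $n = \dim(I_G)$ and show that $\tilde{H}_i(I_G)$ vanishes for every $i$ outside $\{[n/2], \ldots, n\}$. The argument combines two standard vanishing principles: one coming from the dimension of the complex, and the other from the connectivity bound already furnished by Theorem \ref{conn}.

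First I would handle the upper end, which is immediate: since $I_G$ is an $n$-dimensional simplicial complex, its simplicial chain complex sits in degrees $\le n$, so $\tilde{H}_i(I_G) = 0$ for every $i > n$. For the lower end, Theorem \ref{conn} asserts that $I_G$ is $[\frac{n-2}{2}]$-connected, and the Hurewicz theorem then forces $\tilde{H}_i(I_G) = 0$ for all $i \le [\frac{n-2}{2}]$. So the support of the reduced homology is contained in $\{[\frac{n-2}{2}]+1, \ldots, n\}$.

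What remains is a trivial parity check that $[\frac{n-2}{2}] + 1 = [\frac{n}{2}]$, verified by splitting into the cases $n = 2m$ and $n = 2m+1$; both give $m$ on each side. The two vanishing ranges therefore meet exactly at the claimed interval $\{[n/2], \ldots, n\}$. There is no real obstacle; the corollary is essentially a repackaging of Theorem \ref{conn} through the Hurewicz theorem plus the obvious dimensional bound, and the proof is purely formal.
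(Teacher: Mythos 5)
Your argument is correct and matches the paper's reasoning exactly: take $n=\dim(I_G)$, kill high-degree homology by the dimension bound, kill low-degree homology via Theorem \ref{conn} and Hurewicz, and check the identity $[\frac{n-2}{2}]+1=[\frac{n}{2}]$. The paper leaves the final parity computation implicit, but otherwise the two proofs are the same.
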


\begin{ej}
The Independence complex of the claw-free graph in Figure \ref{s1s2} is homotopy equivalent to $S^1\vee S^2$. Therefore, the support of its reduced homology is exactly the interval $\{[\frac{2}{2}],\ldots ,2\}$.

\begin{figure}[h] 
\begin{center}
\includegraphics[scale=0.4]{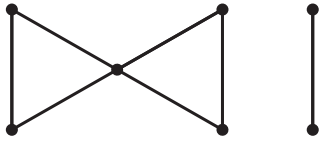}
%\begin{displaymath}
%\xymatrix@R=10pt@C=20pt{ *=0{\bullet} \ar@{-}[rd] \ar@{-}[dd] & & *=0{\bullet} \ar@{-}[dd] & *=0{\bullet} \ar@{-}[dd] \\
%	 & *=0{\bullet} \ar@{-}[ru] \ar@{-}[rd] \ar@{-}[ld] & & & \\
%	 *=0{\bullet} & & *=0{\bullet} & *=0{\bullet} }
%\end{displaymath}
\caption{A claw-free graph $G$ such that $\widetilde{H}_k(I_G)\neq 0$ only for $k=1$ and $k=2$.}\label{s1s2}
\end{center}
\end{figure}

\end{ej}

The following observation shows that there is a direct relationship between matching complexes and claw-free graphs.

\begin{obs}
If three edges in a graph $G$ are adjacent to a same other edge, then two of the first are adjacent. Therefore for every graph $G$, the edge graph $\mathfrak{E}(G)$ is claw-free. In particular the matching complex $M(G)$ is the independence complex of a claw-free graph.
\end{obs}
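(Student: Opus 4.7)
The plan is to derive all three assertions in sequence by a pigeonhole argument on the two endpoints of the ``central'' edge. First I would unpack the relevant definitions: the vertices of $\mathfrak{E}(G)$ are the edges of $G$, and two of them are adjacent in $\mathfrak{E}(G)$ precisely when they share an endpoint in $G$; and claw-freeness means there is no \emph{induced} subgraph isomorphic to $K_{1,3}$.

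For the first sentence of the remark, suppose $e_0,e_1,e_2,e_3$ are edges of $G$ such that each $e_i$ ($i=1,2,3$) is adjacent to $e_0$ in $\mathfrak{E}(G)$, i.e.\ shares an endpoint with $e_0$ in $G$. Writing $e_0=\{u,v\}$, each $e_i$ contains $u$ or $v$, and since there are only two pigeonholes and three pigeons, two of the indices, say $i$ and $j$, correspond to edges $e_i,e_j$ both containing the same endpoint of $e_0$. Thus $e_i$ and $e_j$ share a vertex in $G$ and so are adjacent in $\mathfrak{E}(G)$, proving the first claim.

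For the second claim I would argue by contradiction: if $\mathfrak{E}(G)$ contained an induced copy of $K_{1,3}$ with central vertex $e_0$ and leaves $e_1,e_2,e_3$, then the leaves would be pairwise non-adjacent in $\mathfrak{E}(G)$, contradicting the first claim. Hence no such induced subgraph exists and $\mathfrak{E}(G)$ is claw-free. The final statement is then immediate from the identity $M(G)=I_{\mathfrak{E}(G)}$ recorded at the start of the section.

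There is essentially no obstacle here; the only point that requires attention is remembering that claw-freeness forbids only \emph{induced} $K_{1,3}$'s, because $K_{1,3}$ itself does appear as a (non-induced) subgraph of $\mathfrak{E}(G)$ as soon as some vertex of $G$ has degree at least three. The pigeonhole step is precisely what upgrades this to the induced statement.
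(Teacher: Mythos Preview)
Your argument is correct and is exactly the reasoning the paper has in mind; the remark is stated without a separate proof because the pigeonhole step on the two endpoints of the central edge is precisely the intended justification.
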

%\begin{proof}
%Suppose there is a subgraph $H$ of $G$ isomorphic to $K_{1,3}$. That is, there exists three different edges $e_1, e_2, e_3$ of $G$ which are adjacent to a same edge $e=\{v,w\}$. Then two of them contain the vertex $v$ or the vertex $w$ and therefore $\{e_1, e_2, e_3\}\subseteq \mathfrak{E}(G)$ is not independent. Therefore the subgraph $H$ is not an induced subgraph of $\mathfrak{E}(G)$.  
%\end{proof}

\begin{ej}
The graph $G$ of Figure \ref{notmatch} is claw-free but it is not isomorphic to $\mathfrak{E}(H)$ for any graph $H$. Therefore $I_G$ is the independence complex of a claw-free graph which is not isomorphic to any matching complex, for if $I_G$ were isomorphic to $M(H)=I_{\mathfrak{E}(H)}$, then the complementary graphs of their $1$-skeletons, $G$ and $\mathfrak{E}(H)$ should be isomorphic graphs.

\begin{figure}[h]
\begin{center}
\includegraphics[scale=0.4]{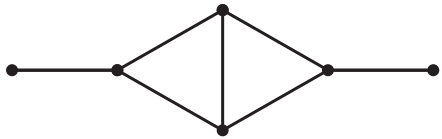}
%\begin{displaymath}
%\xymatrix@R=10pt@C=20pt{ & & *=0{\bullet} \ar@{-}[rd] \ar@{-}[ld] \ar@{-}[dd] & & \\
%	 *=0{\bullet} \ar@{-}[r] & *={\bullet} \ar@{-}[rd] & & *={\bullet} \ar@{-}[ld] \ar@{-}[r] & *={\bullet} \\
%	 & & *=0{\bullet} & & }
%\end{displaymath}
\caption{A claw-free graph whose independence complex is not a matching complex.}\label{notmatch}
\end{center}
\end{figure}

\end{ej}

\begin{coro} \label{conn2}
If $K$ is a matching complex, then it is $[\frac{dim(K)-2}{2}]$-connected. In particular there is an integer $n$ such that the support of $\widetilde{H}(K)$ lies in $\{[\frac{n}{2}], \ldots ,n\}$.
\end{coro}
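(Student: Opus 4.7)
The plan is to deduce this corollary immediately from the preceding material by chaining three observations. First I would recall that for any graph $G$ the matching complex decomposes as $M(G) = I_{\mathfrak{E}(G)}$, where $\mathfrak{E}(G)$ is the edge graph. Second, by the remark just above the statement, every edge graph $\mathfrak{E}(G)$ is claw-free: if three edges of $G$ share a common adjacent edge $e$, then those three edges must already pairwise meet at the two endpoints of $e$, so two of them share an endpoint and are thus adjacent. Hence $K = M(G)$ is the independence complex of the claw-free graph $\mathfrak{E}(G)$.

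Now I would simply invoke Theorem \ref{conn} applied to $G' = \mathfrak{E}(G)$: the independence complex $I_{G'}$ is $\left[\frac{\dim(I_{G'})-2}{2}\right]$-connected. Since $I_{G'} = K$, this yields the desired connectivity bound $\left[\frac{\dim(K)-2}{2}\right]$.

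For the second assertion, I would combine this bound with the dimension bound. By the Hurewicz theorem, a $k$-connected complex has vanishing reduced homology in degrees $\le k$, so $\widetilde{H}_i(K) = 0$ for $i \le \left[\frac{\dim(K)-2}{2}\right]$. On the other hand $\widetilde{H}_i(K) = 0$ for $i > \dim(K)$ for dimensional reasons. Setting $n = \dim(K)$, the support of $\widetilde{H}(K)$ is contained in $\{\left[\frac{n}{2}\right], \ldots, n\}$, as claimed (this is the content of the corollary immediately preceding the statement, applied to the present setting).

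There is essentially no obstacle here: the statement is a formal specialization of Theorem \ref{conn} via the observation that matching complexes form a subclass of independence complexes of claw-free graphs. The only point requiring any care is verifying that the indexing in $\left[\frac{n}{2}\right]$ is consistent between the connectivity bound $\left[\frac{n-2}{2}\right]$ and the homology support statement, which follows since the first nonzero reduced homology group sits one degree above the connectivity.
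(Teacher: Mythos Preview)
Your proposal is correct and follows precisely the intended route: the corollary is an immediate specialization of Theorem~\ref{conn} via the preceding remark that edge graphs are claw-free, together with the Hurewicz/dimension argument already used in the corollary right before. There is nothing to add.
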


Since the dimension of $Stir _n$ is $n-2$, we deduce the following

\begin{coro}
The Stirling complex $Stir _n$ is $[\frac{n-4}{2}]$-connected.
\end{coro}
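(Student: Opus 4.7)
The plan is essentially a direct application of Corollary \ref{conn2} once we identify $Stir_n$ as a matching complex and pin down its dimension. First I would exhibit $Stir_n$ as $M(H_n)$ for a specific bipartite graph $H_n$: take vertex set $\{L_1,\ldots,L_n\}\sqcup \{R_1,\ldots,R_n\}$ with an edge $L_iR_j$ whenever $i<j$. The edges of $H_n$ are in bijection with the vertices of $Stir_n$ (both are pairs $(i,j)$ with $i<j$), and two such edges form a matching in $H_n$ precisely when they differ in both their $L$-endpoint and their $R$-endpoint, which is exactly the simplex condition defining $Stir_n$. Hence $Stir_n = M(H_n)$, so Corollary \ref{conn2} applies.

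Next I would compute $\dim(Stir_n) = n-2$. For the lower bound, the edges $L_1R_2,\, L_2R_3,\, \ldots,\, L_{n-1}R_n$ form a matching of size $n-1$ in $H_n$, giving a simplex of dimension $n-2$ in $Stir_n$. For the upper bound, any matching uses distinct first coordinates from $\{1,\ldots,n-1\}$ (since $i<j\le n$ forces $i\le n-1$), so no matching can have more than $n-1$ edges.

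Finally, plugging $\dim(Stir_n)=n-2$ into Corollary \ref{conn2} yields that $Stir_n$ is $\bigl[\tfrac{(n-2)-2}{2}\bigr] = \bigl[\tfrac{n-4}{2}\bigr]$-connected, which is the desired bound.

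There is no real obstacle here; the only point requiring a sentence of justification is the dimension computation, and even that is immediate from exhibiting an explicit maximum matching. Alternatively, one could bypass the matching-complex framework and invoke Theorem \ref{conn} directly via the line graph of $H_n$ (which is claw-free by the remark preceding Corollary \ref{conn2}), but the matching-complex route via Corollary \ref{conn2} is the most economical.
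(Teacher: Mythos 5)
Your proof is correct and follows the same route as the paper: identify $Stir_n$ as a matching complex, compute $\dim(Stir_n)=n-2$, and apply Corollary \ref{conn2}. The paper states these two facts without justification; you supply the explicit bipartite graph $H_n$ and the maximum matching witnessing the dimension, which is exactly the verification the paper leaves to the reader.
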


Analogous results could be deduced for the complexes $M_n$ and chessboard complexes. Corollary \ref{conn2} says that $M_n$ is $$[\frac{dim(M_n)-2}{2}]=[\frac{1}{2}([\frac{n}{2}]-1-2)]=[\frac{1}{2}[\frac{n-6}{2}]]=[\frac{n-6}{4}]\textrm{-connected,}$$ 
and that $M_{n,m}$ is $$[\frac{dim(M_{n,m})-2}{2}]=[\frac{n-3}{2}]\textrm{-connected}$$ if $n\le m$. Therefore the bounds given by Theorems \ref{conn} and \ref{conn2} are not optimal in these particular examples in contrast with the bounds of Theorem \ref{sharp}. However, we prove that in the generality of Theorems \ref{conn} and \ref{conn2}, the bounds we exhibit are sharp in the following sense:

\begin{prop}
For every non-negative integer $n$ there exists a matching complex $K$ (and therefore an independence complex of a claw-free graph) such that $K$ is $n$-dimensional and it is not $[\frac{n}{2}]$-connected. 
\end{prop}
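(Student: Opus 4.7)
The plan is to produce, for every $n\ge 0$, an explicit graph $G_n$ whose matching complex $M(G_n)=I_{\mathfrak{E}(G_n)}$ has dimension exactly $n$ and is homotopy equivalent to the sphere $S^{[n/2]}$, hence fails to be $[n/2]$-connected (since $\pi_{[n/2]}(S^{[n/2]})\neq 0$). The construction relies only on the fact that $M(G\sqcup H)=M(G)*M(H)$ (the analogue, for matching complexes, of the remark on disjoint unions from Section~2) and on the join identity $S^a*S^b\simeq S^{a+b+1}$.

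First I would set up two building blocks. Since $\mathfrak{E}(K_3)=K_3$ and the only non-empty independent subsets of $K_3$ are singletons, $M(K_3)=I_{K_3}$ is three isolated points, so $M(K_3)\simeq S^0$ and $\dim M(K_3)=0$. For the path $P_4$, the edge graph $\mathfrak{E}(P_4)$ is the path $P_3$; the unique independent set of $P_3$ of size $2$ is the pair of endpoints, so $M(P_4)=I_{P_3}$ consists of an edge together with an isolated vertex. Hence $M(P_4)\simeq S^0$ but $\dim M(P_4)=1$. This gap between simplicial dimension and homotopical dimension is precisely what the argument needs.

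With these blocks in hand, define
\[
G_n=\begin{cases} P_4^{\sqcup a} & \text{if } n=2a-1,\\ K_3\sqcup P_4^{\sqcup a} & \text{if } n=2a. \end{cases}
\]
Iterating the join identity gives $M(G_n)\simeq (S^0)^{*a}\simeq S^{a-1}=S^{[n/2]}$ in the odd case and $M(G_n)\simeq S^0*(S^0)^{*a}\simeq S^{a}=S^{n/2}$ in the even case. Using $\dim(K*L)=\dim K+\dim L+1$ one checks directly that $\dim M(G_n)=a\cdot 1+(a-1)=2a-1=n$ in the odd case and $\dim M(G_n)=0+a\cdot 1+a=2a=n$ in the even case (the case $n=0$ reduces to the single block $G_0=K_3$). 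Therefore $M(G_n)$ is an $n$-dimensional matching complex, equivalently the independence complex of the claw-free graph $\mathfrak{E}(G_n)$, and it is not $[n/2]$-connected.

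There is no genuine obstacle in this argument; the only substantive observation is that one must find a matching-complex block whose difference $\dim-\mathrm{conn}$ exceeds $1$. Indeed, the $K_3$-block has gap $1$, and joining copies of it merely yields spheres $S^{k-1}$ of dimension $k-1$, which are too highly connected to give sharpness. The path $P_4$ furnishes the minimal block with gap $2$ within the matching-complex framework, and toggling with a single $K_3$ takes care of the parity of $n$.
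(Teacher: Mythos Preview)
Your approach is essentially the paper's: assemble the desired matching complex as a join of small blocks by taking disjoint unions of graphs. The paper uses $k$ copies of the $4$-cycle $C_4$ for odd $n=2k-1$ (since $\mathfrak{E}(C_4)=C_4$ and $I_{C_4}\simeq S^0$ has dimension $1$, exactly like your $M(P_4)$), and for even $n=2k-2$ it replaces one $C_4$ by a path $P_3$ of two adjacent edges, whose matching complex $M(P_3)=I_{K_2}$ is two points, i.e.\ genuinely $S^0$ in dimension $0$.

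There is one slip in your even-case block: $M(K_3)$ is three isolated points, not two, so $M(K_3)\simeq S^0\vee S^0$ rather than $S^0$. Consequently for $n=2a$ your $M(G_n)$ is homotopy equivalent to $S^{a}\vee S^{a}$, not $S^{a}$. This does not damage the conclusion---the dimension count is unaffected and $S^{a}\vee S^{a}$ still fails to be $a$-connected---but the clean fix is to replace $K_3$ by $P_3$, which recovers exactly the paper's construction up to the interchangeable choice of $C_4$ versus $P_4$ for the dimension-$1$ block.
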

\begin{proof}
Given $k\ge 1$ consider the graph $A_k$ which is a disjoint union of $k$ squares and the graph $B_k$ which is the disjoint union of two adjacent edges and $k-1$ squares (see Figure \ref{cuadrados}).

\begin{figure}[h] 
\begin{center}
\includegraphics[scale=0.35]{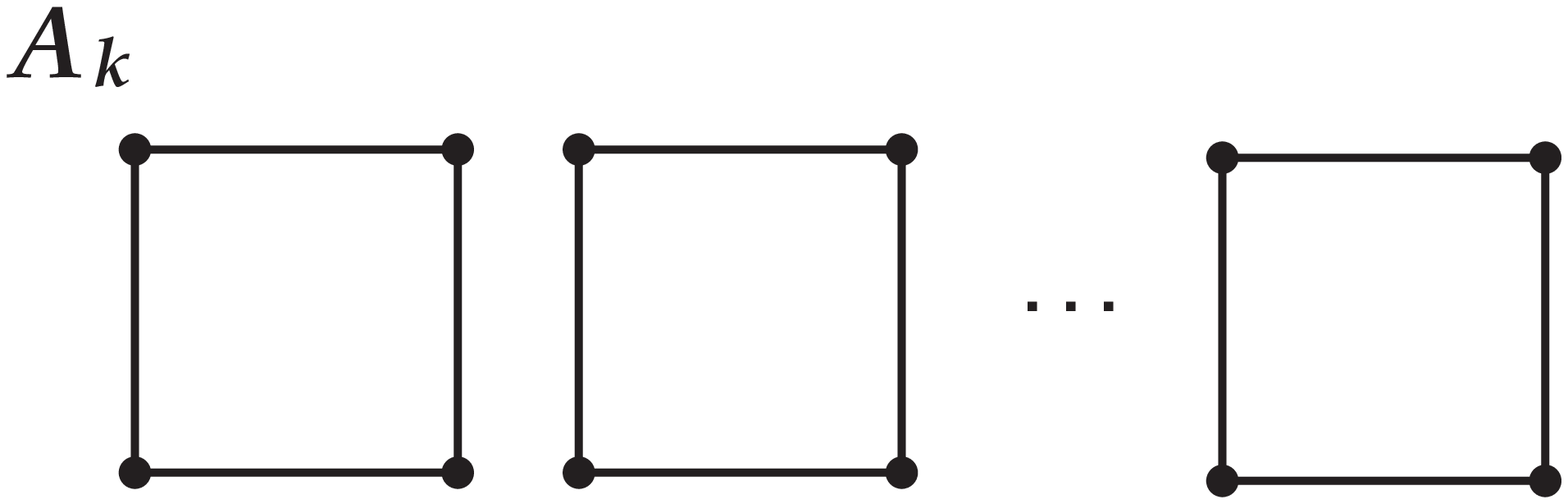}
%\begin{displaymath}
%A_k \ \
%\xymatrix@R=20pt@C=20pt{ *=0{\bullet} \ar@{-}[r] \ar@{-}[d] & *=0{\bullet} \ar@{-}[d] \\
%	 *=0{\bullet} \ar@{-}[r] & *={\bullet} }
%\ \ \ \
%\xymatrix@R=20pt@C=20pt{ *=0{\bullet} \ar@{-}[r] \ar@{-}[d] & *=0{\bullet} \ar@{-}[d] \\
%	 *=0{\bullet} \ar@{-}[r] & *={\bullet} }
%\ \ \ldots \ \
%\xymatrix@R=20pt@C=20pt{ *=0{\bullet} \ar@{-}[r] \ar@{-}[d] & *=0{\bullet} \ar@{-}[d] \\
%	 *=0{\bullet} \ar@{-}[r] & *={\bullet} }
%\end{displaymath}

\includegraphics[scale=0.35]{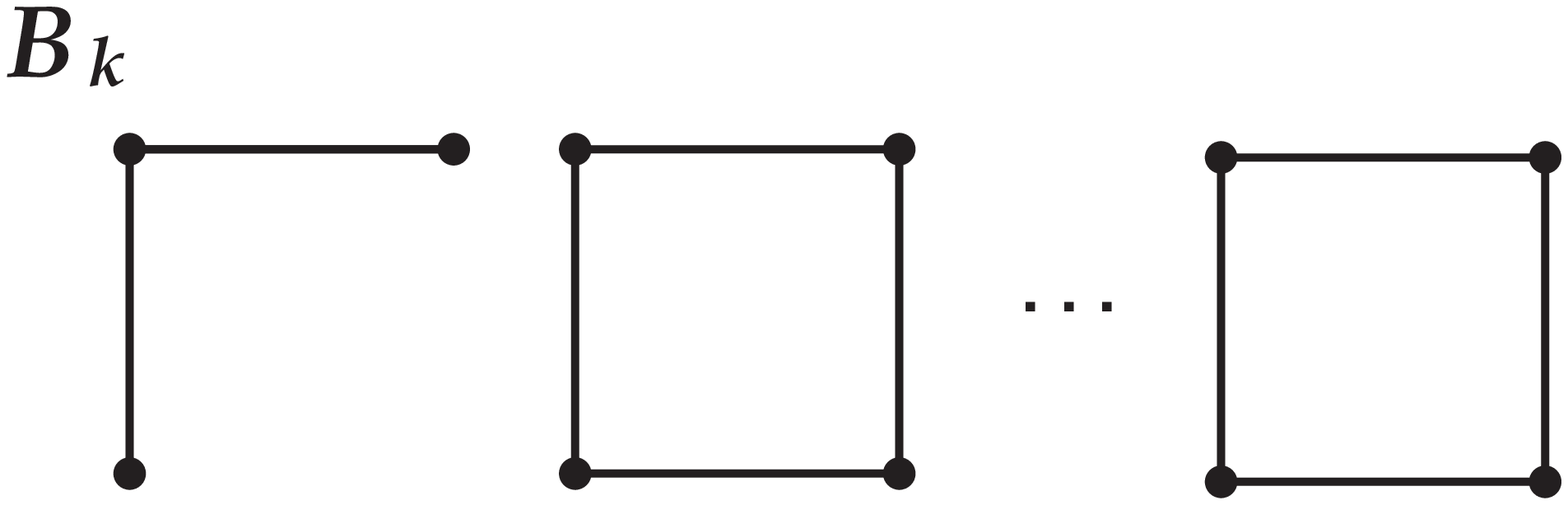}
%\begin{displaymath}
%B_k \ \
%\xymatrix@R=20pt@C=20pt{ *=0{\bullet} \ar@{-}[r] \ar@{-}[d] & *=0{\bullet} \\
%	 *=0{\bullet} }
%\ \ \ \
%\xymatrix@R=20pt@C=20pt{ *=0{\bullet} \ar@{-}[r] \ar@{-}[d] & *=0{\bullet} \ar@{-}[d] \\
%	 *=0{\bullet} \ar@{-}[r] & *={\bullet} }
%\ \ \ldots \ \
%\xymatrix@R=20pt@C=20pt{ *=0{\bullet} \ar@{-}[r] \ar@{-}[d] & *=0{\bullet} \ar@{-}[d] \\
%	 *=0{\bullet} \ar@{-}[r] & *={\bullet} }
%\end{displaymath}
\caption{Graphs $A_k$ and $B_k$.}\label{cuadrados}
\end{center}
\end{figure}

The complex $M(A_k)$ is the independence complex of $\mathfrak{E}(A_k)= A_k$, which is the join of the independence complexes of the connected components of $A_k$. Since the independence complex of a square is homotopy equivalent to $S^0$, $M(A_k)\simeq (S^0)^{*k}=S^{k-1}$ is not $(k-1)$-connected. On the other hand $dim(M(A_k))=2k-1$. Therefore we have constructed a matching complex which is $(2k-1)$-dimensional but not $[\frac{2k-1}{2}]=(k-1)$-connected.

Similarly, $M(B_k)$ is the independence complex of $\mathfrak{E}(B_k)$, which is the disjoint union of an edge and $k-1$ squares. Therefore $M(B_k)\simeq \Sigma ((S^0)^{*(k-1)})=S^{k-1}$. Thus $M(B_k)$ is $(2k-2)$-dimensional but not $[\frac{2k-2}{2}]=(k-1)$-connected.

Clearly the numbers $2k-1$ and $2k-2$ take all the non-negative integer values for $k\ge 1$.
\end{proof}

A difficult problem seems to be the classification of all the homotopy types of matchings complexes and of independence complexes of claw-free graphs or, at least to see whether these two sets differ.

\bigskip

Theorem \ref{conn} can be deduced from the following result.

\begin{prop} \label{mascon}
If $\sigma$ is an independent set of a graph $G$ such that every independent set $\tau$ of $G$ with at most $r$ vertices can be extended to some vertex of $\sigma$, then $I_G$ is $(r-1)$-connected.
\end{prop}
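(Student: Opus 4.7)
The plan is to mimic essentially verbatim the argument used for Theorem \ref{conn}, replacing the claw-free hypothesis with the more general extension hypothesis. The role of the maximal independent set in the proof of Theorem \ref{conn} is played here by the set $\sigma$ itself.

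First I would observe that $\sigma$ is a simplex of $I_G$ (it is independent), so by Lemma \ref{none} the star cluster $SC_{I_G}(\sigma)$ is contractible. Note that $\sigma$ is automatically non-empty, since taking $\tau = \emptyset$ in the hypothesis forces some vertex of $\sigma$ to exist.

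Next I would show that $SC(\sigma)$ contains the entire $(r-1)$-skeleton of $I_G$. Indeed, if $\tau$ is a simplex of $I_G$ of dimension at most $r-1$, then $\tau$ is an independent set with at most $r$ vertices; by hypothesis it can be extended to some $v\in \sigma$, which means $\tau \in st_{I_G}(v) \subseteq SC(\sigma)$. Consequently the relative CW pair $(I_G, SC(\sigma))$ has cells only in dimensions strictly greater than $r-1$, so $\pi_k(I_G, SC(\sigma)) = 0$ for every $k \le r-1$ by the standard cellular argument.

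Finally, I would feed this into the long exact sequence of homotopy groups of the pair,
\[
\cdots \longrightarrow \pi_k(SC(\sigma)) \longrightarrow \pi_k(I_G) \longrightarrow \pi_k(I_G, SC(\sigma)) \longrightarrow \cdots,
\]
together with the contractibility of $SC(\sigma)$, to conclude that $\pi_k(I_G) = 0$ for all $k \le r-1$, i.e.\ $I_G$ is $(r-1)$-connected. There is no real obstacle here; the only mild subtlety is verifying the base case ($k=0$): the hypothesis applied to singletons $\tau = \{v\}$ shows every vertex of $I_G$ lies in the connected set $SC(\sigma)$, hence $I_G$ is path-connected, and Theorem \ref{conn} then drops out by applying this proposition to a maximal independent set $\sigma$ of cardinality $\dim(I_G)+1$ with $r = [\frac{\dim(I_G)}{2}]$, using that in a claw-free graph each vertex of $\tau$ blocks at most two vertices of $\sigma$.
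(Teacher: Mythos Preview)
Your proposal is correct and matches the paper's own approach exactly: the paper states that the proof ``can be made as for Theorem \ref{conn} using the long exact sequence of homotopy groups of the pair $(I_G, SC(\sigma))$ (or collapsing $SC(\sigma)$ and using cellular approximation),'' which is precisely what you carry out. Your added remarks on the $k=0$ case and on recovering Theorem \ref{conn} with $r=[\tfrac{\dim(I_G)}{2}]$ are also correct.
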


The proof can be made as for Theorem \ref{conn} using the long exact sequence of homotopy groups of the pair $(I_G, SC(\sigma))$ (or collapsing $SC(\sigma)$ and using cellular approximation). This result appears mentioned in \cite{Aha} where the notion of \textit{bi-independent domination number} of a graph is introduced. In \cite{Aha} other general bounds for the connectivity of independence complexes are stated in connection with different notions of domination numbers. The proof of this particular result is really simple using star clusters.  
A useful application of Proposition \ref{mascon} to general graphs is the next

\begin{coro} \label{lejos}
Let $G$ be a graph. Let $S$ be a subset of vertices of $G$ which satisfies that the distance between any two elements of $S$ is at least $3$. Then $I_G$ is $(\#S-2)$-connected.  
\end{coro}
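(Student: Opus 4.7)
The plan is to deduce Corollary \ref{lejos} directly from Proposition \ref{mascon} by taking $\sigma = S$ itself and $r = \#S - 1$.

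First I would verify that $S$ is an independent set of $G$: any two elements of $S$ are at distance at least $3$, hence non-adjacent, so $S \in I_G$. The crucial geometric observation from the distance hypothesis is that no vertex $w$ of $G$ can be adjacent to two distinct elements $v_1, v_2 \in S$, for otherwise $w$ would be a common neighbor and $d(v_1, v_2) \le 2$, contradicting the assumption. Thus each vertex of $G$ is adjacent to at most one vertex of $S$.

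Next I would show that every independent set $\tau \in I_G$ with $|\tau| \le \#S - 1$ can be extended to some vertex of $S$. For each $w \in \tau$, the previous observation tells us that $w$ is adjacent to at most one element of $S$; call such an element (if it exists) the vertex of $S$ \emph{blocked} by $w$. Then the set of elements of $S$ blocked by some vertex of $\tau$ has cardinality at most $|\tau| \le \#S - 1 < \#S$, so some $v \in S$ is unblocked. For this $v$, no vertex of $\tau$ is adjacent to $v$, and since $\tau$ is already independent, $\tau \cup \{v\}$ is independent. Hence $\tau$ can be extended to $v$, so $\tau \in st(v) \subseteq SC(S)$.

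Finally I would invoke Proposition \ref{mascon} with $\sigma = S$ and $r = \#S - 1$ to conclude that $I_G$ is $(r - 1)$-connected, that is, $(\#S - 2)$-connected. There is no serious obstacle here: the entire content of the argument is the counting step in the second paragraph, which is why the distance-$3$ hypothesis is exactly the right one (it ensures that the neighborhoods $N_G(v)$ for $v \in S$ are pairwise disjoint, so each blocker can be blamed on a unique element of $S$).
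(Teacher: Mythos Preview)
Your proof is correct and follows exactly the approach the paper intends: the corollary is stated immediately after Proposition~\ref{mascon} as a direct application, with no separate proof given. Your argument supplies precisely the details the paper leaves implicit, namely that the distance-$3$ hypothesis makes $S$ independent and forces each vertex of $G$ to have at most one neighbor in $S$, so that any $\tau$ with $|\tau|\le \#S-1$ misses some star $st(v)$ with $v\in S$.
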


The last result of this section relates the connectivity of the independence complex of a general graph $G$ with the diameter of $G$.

\begin{coro}
Let $G$ be a connected graph of diameter $n$. Then $I_G$ is $[\frac{n}{3}-1]$-connected. 
\end{coro}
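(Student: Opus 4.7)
The plan is to apply Corollary \ref{lejos} directly by producing a sufficiently large set of vertices at pairwise distance at least $3$. Since $G$ has diameter $n$, there exist vertices $u, w$ with $d_G(u,w) = n$, and I can fix a geodesic $u = v_0, v_1, \ldots, v_n = w$ between them.

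My candidate set is $S = \{v_0, v_3, v_6, \ldots, v_{3k}\}$, where $k = [n/3]$, so that $\#S = [n/3]+1$. The key observation I would verify first is that $d_G(v_i, v_j) = |i-j|$ for every pair of vertices along this geodesic; indeed, the inequality $d_G(v_i, v_j) \le |i-j|$ is immediate, and a strict inequality would let me splice in a shorter subpath and contradict the fact that the full path realizes the diameter. In particular, any two distinct elements $v_{3i}, v_{3j}$ of $S$ satisfy $d_G(v_{3i}, v_{3j}) = 3|i-j| \ge 3$.

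Applying Corollary \ref{lejos} to $S$ then gives that $I_G$ is $(\#S - 2)$-connected, that is, $([n/3]+1 - 2) = [n/3]-1$-connected, which is exactly $[n/3 - 1]$ as desired.

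There is essentially no obstacle: the only point that requires care is the preservation of distances along a geodesic, and that is a one-line observation. The real content of the corollary sits in Proposition \ref{mascon}, and here I am just unwinding the combinatorial consequence of having a long geodesic in a connected graph.
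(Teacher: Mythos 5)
Your argument is exactly the paper's: pick a geodesic realizing the diameter, take every third vertex to form a set $S$ with $\#S = [n/3]+1$ and pairwise distance at least $3$, and apply Corollary \ref{lejos}. You spell out the (standard) fact that distances are preserved along a geodesic, which the paper leaves implicit; otherwise the proofs coincide.
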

\begin{proof}
Let $v,w\in G$ such that $d(v,w)=n$ and let $v=v_0, v_1, \ldots , v_n=w$ be a path in $G$. Then the set $S=\{v_0, v_3, \ldots , v_{3[\frac{n}{3}]}\}$ satisfies the hypothesis of Corollary \ref{lejos}. 
\end{proof}

\bigskip

\section{Chromatic number and Strong Lusternik-Schnirelmann category} \label{sectionls}

In this section we present a new approach to study the relationship between the chromatic number of a graph and the topology of the associated complex. The \textit{category} of a topological space is a numerical homotopy invariant that was introduced by L. Lusternik and L. Schnirelmann in the thirties. This deeply studied notion is closely related to other well known concepts such as the \textit{cup length} (the maximum number of positive degree elements in the cohomology ring whose product is non-trivial); the minimum number of critical points of a real valued map, when the space is a manifold; the homotopical dimension of the space. The category of a space in general is not easy to determine. The reader is referred to \cite{Cor2} for results on this invariant.

\begin{defi}
Let $X$ be a topological space. The \textit{strong (Lusternik-Schnirelmann) category} $Cat(X)$ of $X$ is the minimum integer number $n$ such that there exists a CW-complex $Y$ homotopy equivalent to $X$ which can be covered by $n+1$ contractible subcomplexes. If there is not such an integer, we say that $Cat(X)$ is infinite.
\end{defi}

For instance, a space has strong category $0$ if and only if it is contractible. Lemma \ref{dos} is still true if we consider not necessarily finite CW-complexes and therefore a space has strong category less than or equal to $1$ if and only if it has the homotopy type of a suspension. The $2$-dimensional torus $S^1\times S^1$ is an example of space with strong category equal to $2$. There are spaces with arbitrarily large strong category. The following results establish a direct relationship between the chromatic number of a graph and the strong category of its independence complex.

In the next, $N_G(v)$ denotes the subgraph of $G$ induced by the neighbors of $v$.

\begin{teo} \label{catloc}
Let $v$ be a vertex in a graph $G$. Then $$Cat(I_G)\le \chi (N_G(v)).$$
\end{teo}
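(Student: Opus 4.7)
The plan is to exhibit an explicit cover of $I_G$ by $\chi(N_G(v))+1$ contractible subcomplexes, which by definition bounds $Cat(I_G)$ from above by $\chi(N_G(v))$.

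Write $k=\chi(N_G(v))$ and choose a proper $k$-coloring of the induced subgraph $N_G(v)$. This partitions its vertex set as $N_G(v)=C_1\sqcup C_2\sqcup\cdots\sqcup C_k$, where each color class $C_i$ is an independent set of $N_G(v)$, and hence also an independent set of $G$ (the inclusion $N_G(v)\hookrightarrow G$ is an induced subgraph, so edges agree on $N_G(v)$). Consequently, each $C_i$ is a simplex of $I_G$, so the star cluster $SC_{I_G}(C_i)$ is well-defined and contractible by Lemma \ref{none}. The star $st_{I_G}(v)$ is of course also contractible.

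Next I would verify the key covering identity
\[
I_G \;=\; st(v)\;\cup\;\bigcup_{i=1}^{k} SC(C_i).
\]
Given any simplex $\sigma\in I_G$, if $\sigma$ extends to $v$ (i.e.\ $\sigma\cup\{v\}\in I_G$), then $\sigma\in st(v)$. Otherwise some vertex $w\in\sigma$ must be adjacent to $v$ in $G$, so $w\in N_G(v)$ and therefore $w\in C_i$ for some index $i$; then $\sigma\in st(w)\subseteq SC(C_i)$. This exhausts all cases.

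Since $I_G$ itself is a CW-complex homotopy equivalent to $I_G$ and we have just covered it by $k+1$ contractible subcomplexes, the definition of the strong Lusternik--Schnirelmann category immediately gives $Cat(I_G)\le (k+1)-1=k=\chi(N_G(v))$. There is no serious obstacle in this argument; the only subtle point worth writing out carefully is that each color class really is an independent set of the ambient graph $G$ (not merely of $N_G(v)$), and that Lemma \ref{none} applies to the simplex $C_i$ in the clique complex $I_G$.
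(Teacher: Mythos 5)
Your proof is correct and follows essentially the same route as the paper: partition $N_G(v)$ into $\chi(N_G(v))$ color classes, observe that $st(v)$ together with the star clusters of the color classes cover $I_G$ (because any simplex not extending to $v$ meets $N_G(v)$), and invoke Lemma \ref{none} for contractibility. The extra care you take in noting that each color class is independent in $G$ and hence a simplex of $I_G$ is a reasonable expansion of the same argument, not a different method.
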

\begin{proof}
If $n=\chi (N_G(v))$, then the set of neighbors of $v$ can be partitioned into $n$ independent sets $\sigma _1, \sigma _2, \ldots , \sigma _n$. The contractible subcomplexes $st(v), SC(\sigma _1), SC(\sigma _2), \ldots ,SC(\sigma _n)$ cover $I_G$ since an independent set which cannot be extended to $v$ must contain a neighbor of $v$. Thus, $Cat(I_G)\le n$. 
\end{proof}

%\begin{teo} \label{catgral}
%Let $G$ be a graph and $v\in G$ a vertex. Suppose that we paint the vertices of $G$ which are different from $v$ with $n$ colours and then suppose that we can paint the vertex $v$ with $m\ge 1$ of these colours obtaining in this way $m$ possible colourings of the graph $G$. In this case $Cat(I_G)\le n-m$. 
%\end{teo}
%\begin{proof}
%Let $\{1, 2, \ldots , n\}$ be the set of colours and $\sigma _1, \sigma _2, \ldots , \sigma _n$ the subsets of vertices that are painted with each of those colours. We can assume that $\sigma _i$ is non-empty for every $1\le i \le n$. Suppose that $v$ can be painted with the colours $1, 2, \ldots ,m$. Then the sets $v\sigma _1, v\sigma _2, \ldots , v\sigma _m, \sigma _{m+1}, \sigma_{m+2}, \ldots , \sigma _n$ are independent. We show that the subcomplexes $st(v), SC(\sigma _{m+1}), SC(\sigma _{m+2}), \ldots , SC(\sigma _{n})$ cover $I_G$. Let $\tau$ be a simplex of $I_G$. If $\tau$ has a vertex $w\neq v$ with the color $i$ for $m+1\le i \le n$. Then $\tau \in st(w) \subseteq SC(\sigma _i)$. Otherwise, $\tau \subseteq \{v\}\cup \sigma _1 \cup \sigma _2 \cup \ldots \cup \sigma _m$ and then $\tau$ can be extended to $v$. Thus, $\tau \in st(v)$. This proves that $I_G$ is covered by $n-m+1$ contractible subcomplexes.  
%\end{proof}

Theorem \ref{catloc} says that $Cat(I_G)\le min \{\chi (N_G(v)) \ | \ v\in G \}$. On the other hand it is clear that $\chi (G) \ge max \{\chi (N_G(v)) \ | \ v\in G \}+1$. In particular we deduce the following

\begin{coro} \label{cat}
Let $G$ be a graph. Then $\chi (G) \ge Cat(I_G)+1$. 
\end{coro}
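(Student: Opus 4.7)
The plan is to derive the corollary as a direct consequence of Theorem \ref{catloc} combined with a completely elementary observation about chromatic numbers of induced neighborhoods. The heavy lifting, namely producing an explicit cover of $I_G$ by $\chi(N_G(v))+1$ contractible subcomplexes using star clusters, has already been carried out in Theorem \ref{catloc}; what remains is a small bookkeeping step relating $\chi(G)$ and $\chi(N_G(v))$.

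First, I would recall the content of Theorem \ref{catloc}, namely $Cat(I_G) \le \chi(N_G(v))$ for every vertex $v$ of $G$. Since this inequality holds pointwise, one immediately obtains $Cat(I_G) \le \min_{v \in G} \chi(N_G(v))$. Next, I would establish the standard fact $\chi(G) \ge \chi(N_G(v)) + 1$ for every vertex $v$: any proper coloring of $G$ with $\chi(G)$ colors restricts to a proper coloring of the induced subgraph $N_G(v)$, and since no neighbor of $v$ can share the color of $v$, this restriction uses at most $\chi(G) - 1$ colors, giving $\chi(N_G(v)) \le \chi(G) - 1$. (For the degenerate case in which $G$ has an isolated vertex $v$, one takes $N_G(v) = \emptyset$ and $\chi(\emptyset) = 0$; in this case $I_G$ is a cone over $I_{G \setminus v}$, so $Cat(I_G) = 0$, and the inequality $\chi(G) \ge 1 \ge Cat(I_G) + 1$ still holds provided $G$ is nonempty.)

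Stringing the two inequalities together yields
\[
Cat(I_G) \le \min_{v \in G} \chi(N_G(v)) \le \max_{v \in G} \chi(N_G(v)) \le \chi(G) - 1,
\]
which is exactly the statement of the corollary.

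There is no real obstacle here: the genuinely topological content lives entirely inside Theorem \ref{catloc}, whose proof partitions the neighbors of a chosen vertex $v$ into independent sets and then covers $I_G$ by $st(v)$ together with the star clusters of those independent sets. The only subtlety worth flagging in the writeup is that one should either assume $G$ is nonempty from the outset, or note explicitly what $\chi$ and $Cat$ evaluate to on the empty graph and empty complex, so that the inequality remains meaningful in trivial cases.
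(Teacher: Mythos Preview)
Your argument is correct and matches the paper's derivation exactly: the paper states that Theorem \ref{catloc} gives $Cat(I_G)\le \min_{v}\chi(N_G(v))$, observes that $\chi(G)\ge \max_{v}\chi(N_G(v))+1$, and deduces the corollary. The paper also records, right after the corollary, a slightly more direct alternative: partition the vertex set of $G$ into $\chi(G)$ independent sets and note that their star clusters already cover $I_G$, bypassing the detour through a fixed vertex $v$.
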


An alternative proof of Corollary \ref{cat} is as follows. Let $n=\chi (G)$. Then the set of vertices of $G$ can be partitioned into $n$ independent sets, and clearly their star clusters cover $I_G$. 

\begin{ej} \label{colores}
The chromatic number of the graph $G$ of Figure \ref{color} is $4$. However the bound of Corollary \ref{cat} is not sharp since there is a vertex $v$ such that $\chi (N_G(v))=2$ and then by Theorem \ref{catloc}, $Cat(I_G)\le 2$. In this case, the equality holds.
\begin{figure}[h] 
\begin{center}
\includegraphics[scale=0.5]{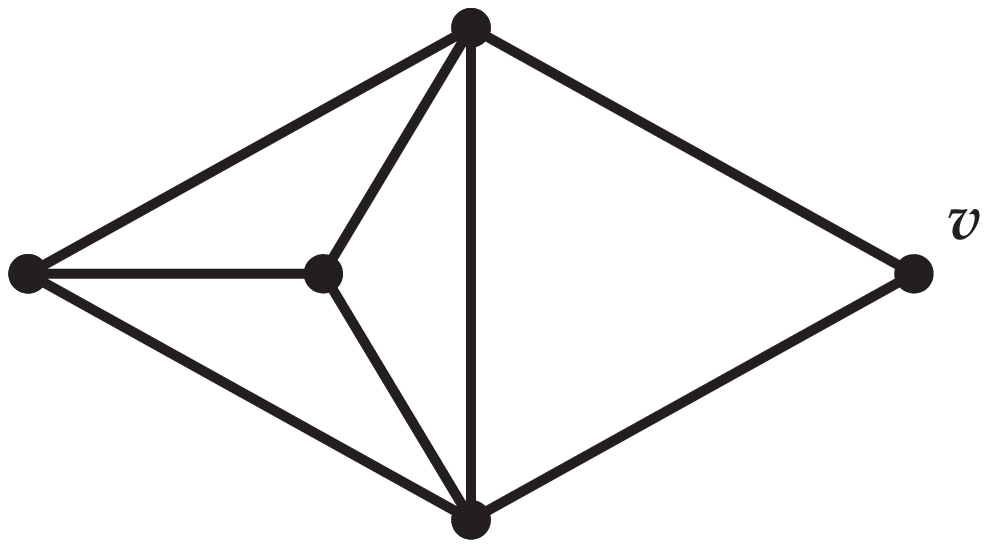}
%\begin{displaymath}
%\xymatrix@R=15pt@C=30pt{ & *=0{\bullet} \ar@{-}[lddd] \ar@{-}[dd] \ar@{-}[rddd] \ar@{-}[rr] & & *=0{\bullet} \ar@{-}[lddd] \\
%		& & & \\
%	 	& *=0{\bullet} \ar@{-}[ld] \ar@{-}[rd] & & \\
%	 	*=0{\bullet} \ar@{-}[rr] & & *=0{\bullet} &}
%\xymatrix@R=15pt@C=24pt{ ^v \\
%              	\\
% 		\\
%		}
%\end{displaymath}
\caption{The graph $G$ in Example \ref{colores}.}\label{color}
\end{center}
\end{figure}
\end{ej}

Unfortunately, the numbers $Cat(I_G)$ and $\chi (N_G(v))$ of Theorem \ref{catloc} can be very far one from the other. Consider the Kneser graph $G=KG_{2,k}$ for $k\ge 2$. Then the subgraph induced by the neighbors of any vertex $v=\{a,b\}$ is isomorphic to $KG_{2,k-2}$. By the Theorem of Lov\'asz, the Kneser conjecture, $\chi (N_G(v))=k$. On the other hand $Cat(I_G)=1$ since $I_G$ is homotopy equivalent to a wedge of $2$-dimensional spheres by Corollary \ref{kneser}.

\begin{ej}
It is well known that the cup-length of a space is a lower bound for the strong category. The cup length of the complex projective space $\mathbb{CP}^n$ equals its complex dimension $n$. In particular we deduce from Theorem \ref{cat} that if $G$ is a graph whose independence complex is homotopy equivalent to $\mathbb{CP}^n$, then $\chi (G)\ge n+1$.

The cup length of the $n$-dimensional torus $T^n$ is also $n$. Therefore if $I_G\simeq T^n$, $\chi (G)\ge n+1$.
\end{ej}

\begin{coro} \label{planarcat}
If $G$ is a planar graph, then $Cat(I_G)\le 3$.
\end{coro}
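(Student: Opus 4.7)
The plan is to apply Theorem~\ref{catloc} to any vertex of $G$, once we show that the chromatic number of the induced neighborhood $N_G(v)$ is at most $3$. So the strategy reduces to the purely graph-theoretic statement: if $G$ is planar, then $\chi(N_G(v))\le 3$ for every vertex $v$ of $G$.

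First I would observe that the subgraph of $G$ induced by $N(v)\cup\{v\}$ is planar, being a subgraph of the planar graph $G$. Since $v$ is adjacent to every vertex in $N(v)$ and the remaining edges of this induced subgraph are exactly the edges of $N_G(v)$, the induced subgraph is the join $N_G(v)\ast K_1$. I would then invoke the classical characterization of outerplanar graphs: a graph $H$ is outerplanar if and only if $H\ast K_1$ is planar. (Geometrically, starting from a planar embedding of $G$ and sending $v$ to infinity by stereographic projection turns the edges incident to $v$ into disjoint rays to infinity, which forces every element of $N(v)$ onto the outer face of the resulting embedding of $N_G(v)$.) Thus $N_G(v)$ is outerplanar.

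Next, every outerplanar graph has a vertex of degree at most two (a short Euler-formula argument), so by induction on the number of vertices one obtains $\chi(H)\le 3$ for every outerplanar $H$. Applying this to $N_G(v)$ gives $\chi(N_G(v))\le 3$.

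Finally, Theorem~\ref{catloc} yields $Cat(I_G)\le \chi(N_G(v))\le 3$, as desired. There is no real obstacle: the only nontrivial input is the classical equivalence between outerplanarity of $H$ and planarity of $H\ast K_1$, which is a standard fact one may cite rather than reprove. Note also that this argument is strictly better than the weaker bound $Cat(I_G)\le \chi(G)-1\le 3$ coming from Corollary~\ref{cat} combined with the Four Color Theorem, since it avoids that deep result and relies only on 3-colorability of outerplanar graphs.
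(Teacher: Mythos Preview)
Your proof is correct and, like the paper's, applies Theorem~\ref{catloc} after bounding $\chi(N_G(v))$ by $3$ without invoking the Four Colour Theorem. The difference lies in how that bound is obtained. The paper first chooses a vertex $v$ of degree at most five (which exists by Euler's formula), observes that $N_G(v)$ contains no $K_4$ since $G$ contains no $K_5$, and then notes that any $K_4$-free graph on at most five vertices is $3$-colourable --- a finite case check. Your argument instead works for \emph{every} vertex $v$: you identify the induced subgraph on $N(v)\cup\{v\}$ as the join $N_G(v)\ast K_1$, invoke the classical equivalence between outerplanarity of $H$ and planarity of $H\ast K_1$, and then use that outerplanar graphs are $2$-degenerate and hence $3$-colourable. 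Your route is structurally cleaner and yields the slightly stronger conclusion $\chi(N_G(v))\le 3$ for all $v$ rather than for some $v$; the paper's route is more self-contained in that it avoids citing the outerplanarity characterization and reduces the $3$-colourability claim to an elementary check on graphs with at most five vertices.
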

\begin{proof}
This follows immediately from the Four colour theorem and Corollary \ref{cat}, but we give a proof using more basic results. Since $G$ is planar, there exists a vertex $v$ of degree less than or equal to five. Again by the planarity of $G$, the subgraph $N_G(v)$ induced by the neighbors of $v$ does not contain a clique of four vertices. Then it is easy to check that $\chi (N_G(v))\le 3$ and the result follows from Theorem \ref{catloc}.  
\end{proof}

%\begin{proof}
%Let $n=\chi (G)$ be the cromatic number of $G$. Then there exists a partition of the set of vertices of $G$ into $n$ independent sets $\sigma _1, \sigma _2, \ldots ,\sigma _n$. It is clear then, that $I_G$ is covered by the star clusters of these simplices, which are contractible subcomplexes. Therefore, $Cat(I_G)\le n-1$.
%\end{proof}

%In fact, something stronger holds. Assume there is a coloration of $G$ in such a way that the color of some vertex can be changed obtaining a new coloration. In this case $Cat(I_G)\le \chi (G)-2$. Suppose that $\{v\sigma _1, \sigma _2, \ldots ,\sigma _n\}$ is a partition of the vertex set into independent sets and that $\{\sigma _1, v\sigma _2, \ldots ,\sigma _n\}$ is another partition. Then the subcomplexes $SC(v\sigma _2), SC(\sigma _3), \ldots , SC(\sigma _n)$ cover $I_G$, since if $\tau$ is a face of $\sigma _1$, then $\tau \subseteq v\sigma _1 \in st(v) \subseteq SC(v\sigma _2)$.

\bigskip

\section{Suspensions of graphs} \label{sectionsuspension}

We introduce a construction that will allow us to generalize some results mentioned in the previous sections and a new result of Skwarski \cite{Skw} on independence complexes of planar graphs. We will also use this to prove some results on graphs with bounded maximum degree.

%Given a graph $G$, $M(G)$ will denote the set of maximal independent sets of vertices of $G$.

\begin{defi}
Let $G$ be a graph and $H\subseteq G$ a subgraph. The \textit{suspension} of $G$ over $H$ is a graph $S(G,H)$ whose vertices are those of $G$, a new vertex $v$, and a new vertex $v_M$ for each maximal independent set of vertices $M$ in the graph $H$. The edges of $S(G,H)$ are those edges of $G$ which are not edges of $H$, the pairs $(v,v_M)$ and the pairs $(w,v_M)$ with $w$ a vertex of $H$ which is not contained in $M$. 
\end{defi}

\begin{prop} \label{suspension}
Let $H$ be a subgraph of a graph $G$. Then the independence complex of $S(G,H)$ is homotopy equivalent to $\Sigma(I_G)$.
\end{prop}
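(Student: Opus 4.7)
The plan is to apply Lemma \ref{dos} to a decomposition $I_{S(G,H)} = A \cup B$ into two contractible subcomplexes whose intersection is $I_G$. Set $A = st(v)$, contractible as the star of a vertex. For $B$, note that by the definition of $S(G,H)$ no two of the new vertices $v_M, v_{M'}$ are adjacent (the only edges incident to any $v_M$ have the other endpoint equal to $v$ or to a vertex of $V(H)\setminus M$), so $W := \{v_M : M \text{ maximal independent in } H\}$ is a simplex of $I_{S(G,H)}$, and $B := SC(W)$ is contractible by Lemma \ref{none}.

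That $A \cup B = I_{S(G,H)}$ is immediate: any independent set $\sigma$ either contains some $v_{M_0}$, giving $\sigma \in st(v_{M_0}) \subseteq B$, or contains no $v_M$, in which case $\sigma \cup \{v\}$ is independent (since $v$ is adjacent only to the $v_M$'s) and $\sigma \in A$.

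The heart of the argument is the identification $A \cap B = I_G$. If $\sigma \in A \cap B$ then $v \notin \sigma$ (since $v$ is adjacent to every vertex of $W$, so membership in $B$ excludes $v$), and then membership in $A$ forces $\sigma$ to contain no $v_M$; hence $\sigma \subseteq V(G)$. The independence of $\sigma$ in $S(G,H)$ now rules out all edges of $G \setminus H$ within $\sigma$, while the relation $\sigma \in st(v_{M_0}) \subseteq B$ forces $\sigma \cap V(H) \subseteq M_0$, i.e. $\sigma \cap V(H)$ is $H$-independent, eliminating the remaining $H$-edges. Thus $\sigma \in I_G$, and conversely every $\sigma \in I_G$ is readily seen to lie in $A \cap B$. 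Lemma \ref{dos} then yields $I_{S(G,H)} \simeq \Sigma(A \cap B) = \Sigma(I_G)$.

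The main obstacle is little more than careful adjacency bookkeeping in $S(G,H)$, but the crucial design feature to exploit is that the construction \emph{deletes} the edges of $H$ from $G$: without this deletion, $H$-independence of $\sigma \cap V(H)$ would not upgrade to full $G$-independence of $\sigma$, and the identification $A \cap B = I_G$ would break down.
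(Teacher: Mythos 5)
Your proof is correct and follows essentially the same strategy as the paper: the paper invokes Theorem \ref{fuerte} with the new vertex $v$ (which is contained in no triangle and has $N_{S(G,H)}(v)=W$) and then verifies $st(v)\cap SC(N_{S(G,H)}(v))=I_G$, exactly the identification $A\cap B=I_G$ you establish. You have simply inlined Theorem \ref{fuerte} by applying Lemma \ref{dos} directly to the cover $I_{S(G,H)}=st(v)\cup SC(W)$, so the two arguments coincide step for step.
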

\begin{proof}
Since $v$ is contained in no triangle, by Theorem \ref{fuerte} it is enough to prove that $st_{S(G,H)}(v)\cap SC(N_{S(G,H)}(v))=I_G$. Let $\sigma \in st(v)\cap SC(N(v))$ and let $w,w'\in \sigma$. It is clear that $w,w'\in G$. Suppose that $w,w'\in H$. The fact that $\{w,w'\}\in SC(N(v))$ says that there exists a maximal independent set $M$ of $H$ such that $\{w,w'\}\subseteq M$ and, in particular, $(w,w')$ is not an edge of $H$. Since $\sigma$ is independent in $S(G,H)$, $(w,w')$ is not an edge of $G$ either. Since this is true for any pair of vertices $w,w'\in \sigma$, $\sigma \in I_G$. Conversely, if $\sigma \neq \emptyset$ is an independent set of vertices of $G$, it is also independent in $S(G,H)$. Moreover $\sigma \cap H$ is independent in $H$ and then there exists a maximal independent set $M$ of $H$ containing $\sigma \cap H$. Thus, $\sigma$ can be extended to $v_M$. Therefore $\sigma \in st(v) \cap SC(N(v))$.
\end{proof}

If the subgraph $H$ is discrete, then $S(G,H)$ is just the disjoint union of $G$ and an edge. If $H$ is an edge, then $S(G,H)$ is the subdivision described in Proposition \ref{subdiv4}, which consist in replacing the edge by a path of length $4$. If $H=G$ then $S(G,H)$ is a bipartite graph. Taking a graph $G$ such that $I_G$ is the barycentric subdivision of a complex $K$, this provides an alternative proof of Jonsson's Theorem \ref{jonsson} which says that for any complex $K$ there exists a bipartite graph whose independence complex is homotopy equivalent to $\Sigma (K)$. An interesting application of Proposition \ref{suspension} is the following alternative proof of a result recently proved by Skwarski \cite{Skw}.  

\begin{teo}[Skwarski] \label{skwar}
Let $K$ be a complex. Then there exists a planar graph whose independence complex is homotopy equivalent to an iterated suspension of $K$.
\end{teo}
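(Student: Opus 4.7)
The plan is to realize $K$ as $I_{G_0}$ for some graph $G_0$ (via the construction in the introduction that makes $I_{G_0}$ the barycentric subdivision $K'$, and hence homeomorphic to $K$), and then to modify $G_0$ by iterated applications of the suspension construction of Proposition~\ref{suspension} so that it becomes planar, at the cost of one suspension per application.

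Fix a plane drawing of $G_0$ with crossing number $c$. I would handle the crossings one by one. To resolve a crossing $p$ between edges $e_1=(a,b)$ and $e_2=(c,d)$, I would first invoke Proposition~\ref{subdiv4} to subdivide $e_1$ (and independently $e_2$) a sufficient number of times so that, placing the new subdivision vertices between consecutive crossings along each edge, the particular sub-edges $\tilde e_1,\tilde e_2$ passing through $p$ are incident to no crossings other than their common one at $p$. Each subdivision counts as one suspension. Then I would apply Proposition~\ref{suspension} with $H=\{\tilde e_1,\tilde e_2\}$. The four maximal independent sets of $H$ give four new vertices $v_{ac},v_{ad},v_{bc},v_{bd}$, which together with the apex $v$ produce the octagon $a-v_{bc}-d-v_{ac}-b-v_{ad}-c-v_{bd}-a$ with $v$ joined to each $v_M$. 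This sub-structure is planar inside a small disc around $p$, and since a crossing imposes the same cyclic order on $\{a,b,c,d\}$ around $p$ (up to orientation) as the octagon exhibits, the gadget embeds in the disc with the eight new edges to $\{a,b,c,d\}$ routed along $\tilde e_1,\tilde e_2$, introducing no new crossings because those sub-edges had been isolated from all other crossings.

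Iterating this procedure for each of the original crossings yields, after a finite number $n$ of suspensions, a planar graph $G_n$ with $I_{G_n}\simeq \Sigma^n K$ by Proposition~\ref{suspension}. The main obstacle will be the local bookkeeping: one must check that the gadget insertion performed for one crossing does not reintroduce crossings at previously resolved locations nor corrupt the cyclic data at still-unresolved crossings. Because every modification is confined to a small disc around its crossing point, and because each subdivision step merely adds vertices on existing curves without altering them globally, this bookkeeping is routine and the crossing number strictly decreases at each gadget insertion, guaranteeing termination with a planar graph.
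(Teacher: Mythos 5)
Your proposal is correct and follows essentially the same strategy as the paper: realize $K$ (up to homeomorphism) as an independence complex, invoke Proposition~\ref{suspension} to reduce the problem to reaching a planar graph by a sequence of graph suspensions, subdivide edges so each crossing becomes isolated on its two crossing sub-edges, and then resolve each isolated crossing by suspending over the subgraph consisting of those two non-adjacent edges, which replaces the crossing by the planar ``wheel over an octagon'' gadget you describe. The only cosmetic difference is bookkeeping order: the paper first performs all subdivisions globally (so that every edge crosses at most one other), then resolves the resulting pairwise crossings one at a time, whereas you interleave the subdivision and resolution steps crossing by crossing; both terminate and realize some iterated suspension $\Sigma^n K$.
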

\begin{proof}
By Proposition \ref{suspension} it is enough to show that for any graph $G$ there is a sequence $G=G_0, G_1, \ldots, G_n$ of graphs in which $G_{i+1}$ is the suspension of $G_i$ over some subgraph $H_i$, and such that $G_n$ is planar. We will show that this is possible taking the subgraphs $H_i\subseteq G_i$ as just one edge or two non-adjacent edges.

We consider the graph $G$ drawn in the plane, in such a way that the vertices are in general position, all the edges are straight lines and no three of them are concurrent.

Suppose that only two edges $(w_0,w_1)$, $(z_0,z_1)$ of $G$ intersect. In this case we take the subgraph $H$ as the union of these edges. The suspension $S(G,H)$ is a planar graph. Indeed, it is possible to find an embedding of $S(G,H)$ in the plane keeping all the vertices of $G$ fixed, choosing $v$ as the intersection of $(w_0,w_1)$ and $(z_0,z_1)$, and each of the other four new vertices $v_{\{w_i,z_j\}}$ ($i,j\in \mathbb{Z}_2$) in the triangle $w_{i+1}vz_{j+1}$ closely enough to $v$ (see Figure \ref{sus}).

\begin{figure}[h] 
\begin{center}
\includegraphics[scale=0.4]{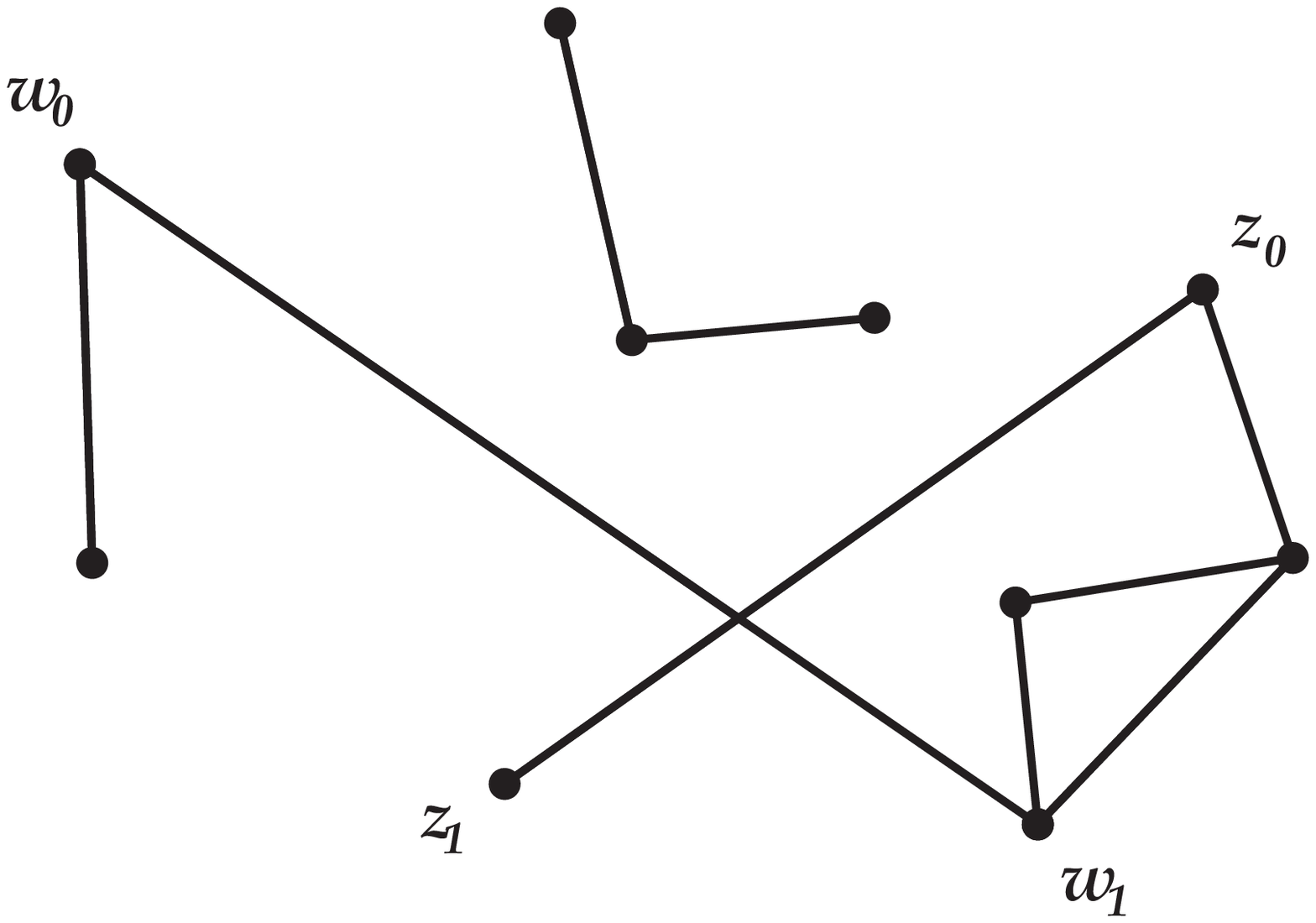}
\qquad
\qquad
\includegraphics[scale=0.4]{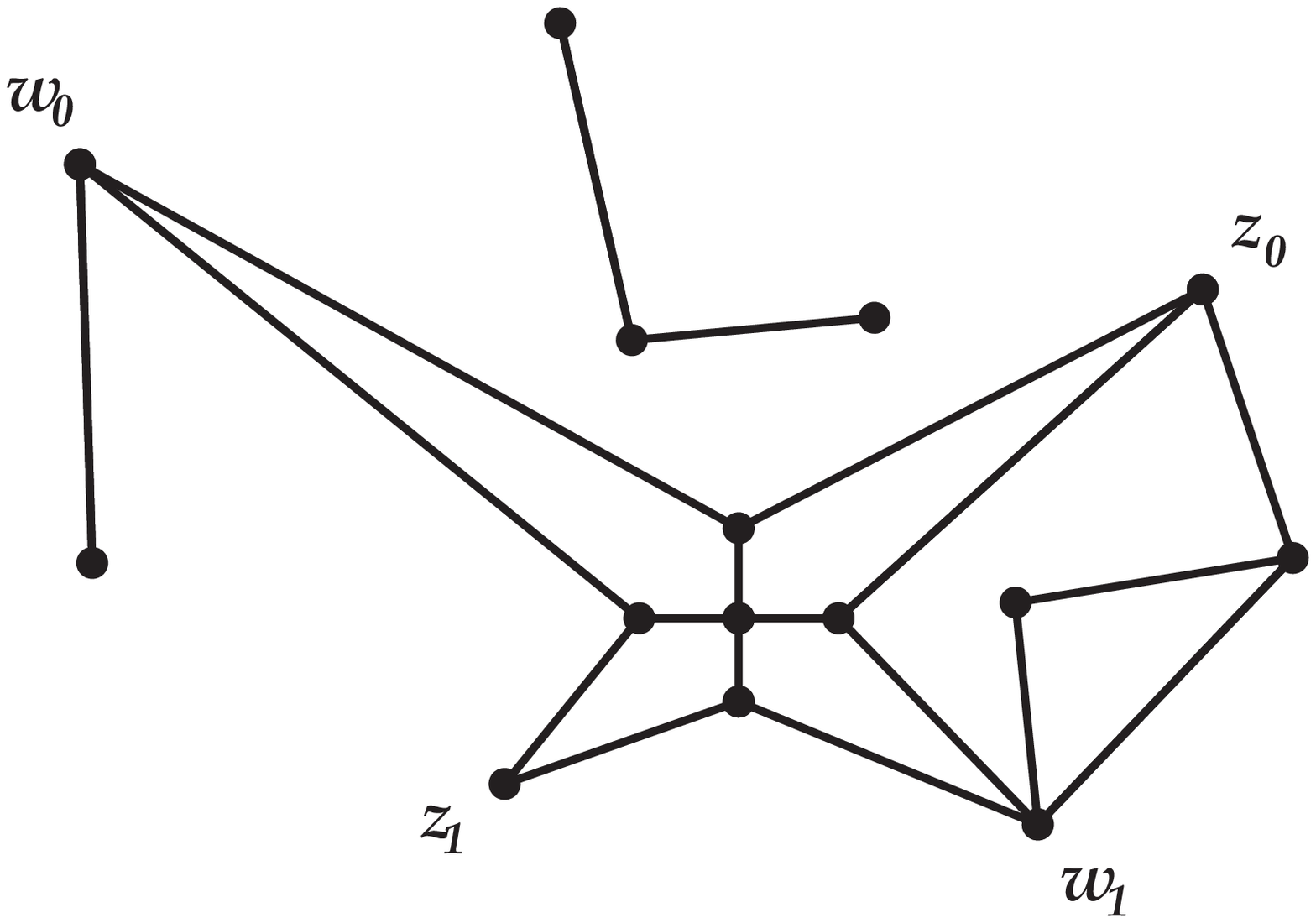}
\caption{In the graph at the left two edges intersect. At the right, a suspension of the graph embedded in the plane.}\label{sus}
\end{center}
\end{figure}

Now suppose that in the general case $G$ has an arbitrary number of intersecting edges. First we subdivide the edges, making suspensions over one edge at the time, in such a way that the resulting graph is drawn in the plane with each edge intersecting at most one other edge. Then we take suspensions one at the time over each pair of intersecting edges, following the idea of the basic case described above. In each step the number of intersecting edges is reduced by one, and finally a planar graph is obtained.
\end{proof}

This result says that the homology groups of planar graphs can be arbitrarily complicated. On the other hand Corollary \ref{planarcat} asserts that not any homotopy type is realized as the independence complex of a planar graph. It remains as an open problem the description of the homotopy types of independence complexes of planar graphs. 

\bigskip

Recall once again that the maximum degree of a graph $G$ is the maximum number $m$ among all the degrees of the vertices of $G$. If $m=1$, $I_G$ is homotopy equivalent to a sphere or it is contractible. If $m\le 2$, $G$ is a disjoint union of cycles and paths, and by the results of Section \ref{sectionapplications}, $I_G$ is contractible or homotopy equivalent to a join of discrete spaces of cardinalities $2$ or $3$. Since for complexes $K, L$, we have that $K*L\simeq \Sigma (K\wedge L)$, where $K\wedge L$ denotes the smash product between $K$ and $L$, it is easy to see that the join between $\bigvee\limits_{i=1}^{r} S^n$ and the discrete space of $k$ points is homotopy equivalent to $\bigvee\limits_{i=1}^{r(k-1)} S^{n+1}$. Therefore by an inductive argument it is possible to prove the following

\begin{prop}
A complex $K$ is homotopy equivalent to the independence complex of a graph with maximum degree $m\le 2$ if and only if $K$ is contractible or homotopy equivalent to $\bigvee\limits_{i=1}^{2^r} S^n$ for some $n\ge r-1$. 
\end{prop}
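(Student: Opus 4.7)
The plan is to reduce to the connected components of $G$ and then assemble them with the join formula stated immediately above the proposition.

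First, a graph $G$ of maximum degree at most $2$ is precisely a disjoint union of paths and cycles. By the remark in the Preliminaries, $I_G$ is then the iterated simplicial join of the independence complexes of its connected components. By Theorem \ref{ciclos}, each $I_{C_m}$ is either a sphere $S^{k-1}$ or a wedge $S^{k-1}\vee S^{k-1}$; and by the discussion of paths in Section \ref{sectionapplications}, each $I_{P_n}$ is contractible or a single sphere. Hence every factor is either contractible or homotopy equivalent to some $\bigvee_{c_i} S^{n_i}$ with $c_i\in\{1,2\}$ and $n_i\ge 0$.

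For the forward direction, if one factor is contractible then so is the entire join, and $K$ is contractible. Otherwise, combining the identities $K*L\simeq \Sigma(K\wedge L)$, the distributivity of $\wedge$ over $\vee$, and $S^a\wedge S^b=S^{a+b}$, an induction on the number $N$ of factors yields
\[
\left(\bigvee_{c_1} S^{n_1}\right) * \cdots * \left(\bigvee_{c_N} S^{n_N}\right) \simeq \bigvee_{c_1 c_2 \cdots c_N} S^{n_1+\cdots+n_N+N-1}.
\]
Letting $r$ denote the number of indices with $c_i=2$, one has $c_1\cdots c_N=2^r$, and the exponent $n:=n_1+\cdots+n_N+N-1$ satisfies $n\ge r-1$ since each $n_i\ge 0$ and $N\ge r$.

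For the converse, given $r\ge 0$ and $n\ge r-1$, set $t:=n-r+1\ge 0$ and take $G$ to be the disjoint union of $r$ copies of the triangle $C_3$ together with $t$ disjoint edges. Then $G$ has maximum degree at most $2$; since $I_{C_3}$ is the discrete $3$-point space $S^0\vee S^0$ and the independence complex of a single edge is $S^0$, the same formula gives $I_G\simeq \bigvee_{2^r} S^n$. Any contractible $K$ is realized by taking $G$ to be a single vertex, and the boundary value $r=0$, $n=-1$ by the empty graph. The only real subtlety is the bookkeeping: the inductive verification of the wedge-of-spheres join formula, the collapse of the join as soon as a contractible factor appears, and the degenerate cases $r=0$ and $n=-1$. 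The main obstacle is therefore not conceptual, since the substantive tools have already been developed above.
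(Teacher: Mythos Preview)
Your proof is correct and follows essentially the same route as the paper's sketch: decompose $G$ into paths and cycles, use the computations of Section~\ref{sectionapplications} for the factors, and assemble via the join formula $K*L\simeq\Sigma(K\wedge L)$. The only cosmetic difference is that the paper first rewrites each non-contractible factor as a join of discrete spaces of size $2$ or $3$ (so that the whole $I_G$ is a join of such spaces) before applying the wedge-of-spheres formula, whereas you keep each component as a $\bigvee_{c_i}S^{n_i}$ with $c_i\in\{1,2\}$ and apply the formula directly; the bookkeeping and the converse construction with $r$ triangles and $t$ edges are the same.
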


The next case, for graphs with maximum degree less than or equal to $3$ is already much more complex. We will use suspensions of graphs to prove a result similar to Theorem \ref{skwar} for that class.  

\begin{teo} \label{maxdeg3}
Let $K$ be a complex. Then there exists a graph of maximum degree not greater than $3$ whose independence complex is homotopy equivalent to an iterated suspension $\Sigma ^r(K)$ of $K$.
\end{teo}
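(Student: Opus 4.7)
The plan is to imitate the structure of the proof of Skwarski's Theorem \ref{skwar}, replacing the target property ``planarity'' by ``maximum degree at most $3$''. By Proposition \ref{suspension}, if $G_0, G_1, \ldots, G_n$ is any sequence of graphs with $G_{i+1} = S(G_i, H_i)$ for some subgraph $H_i \subseteq G_i$, then $I_{G_n} \simeq \Sigma^n I_{G_0}$. So it is enough to start with a graph $G_0$ satisfying $I_{G_0} \simeq K$ (which exists by the construction recalled in Section \ref{sectionintro}) and then exhibit a sequence of such suspensions terminating in a graph of maximum degree at most $3$.

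The engine of the proof is a single local move that strictly reduces one offending degree without creating any new offending vertex. Suppose $G$ has a vertex $v_0$ of degree $d \geq 4$, and pick any two neighbors $u_1, u_2$ of $v_0$. Let $H$ be the subgraph of $G$ with vertex set $\{v_0,u_1,u_2\}$ and edge set $\{(v_0,u_1),(v_0,u_2)\}$ (a path of length $2$). The maximal independent sets of $H$ are exactly $\{v_0\}$ and $\{u_1,u_2\}$, so in $S(G,H)$ we add three new vertices $v$, $v_{\{v_0\}}$, $v_{\{u_1,u_2\}}$, of degrees $2$, $3$, $2$ respectively. Writing out the adjacencies one sees that $v_0$ loses its edges to $u_1,u_2$ but gains an edge to $v_{\{u_1,u_2\}}$, so its degree drops from $d$ to $d-1$; while each $u_j$ loses its edge to $v_0$ but gains an edge to $v_{\{v_0\}}$, keeping its degree unchanged. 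Every other vertex of $G$ is untouched.

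Consider now the non-negative integer invariant
\[
E(G) = \sum_{w \in V(G)} \max\bigl(0,\,\deg_G(w) - 3\bigr).
\]
The move above decreases $E$ by exactly $1$, since only the degree of $v_0$ changes among old vertices and all three new vertices have degree at most $3$. Iterating the move starting from $G_0$ produces a finite sequence $G_0, G_1, \ldots, G_n$ with $E(G_n) = 0$, i.e.\ $G_n$ has maximum degree at most $3$, and Proposition \ref{suspension} gives $I_{G_n} \simeq \Sigma^n I_{G_0} \simeq \Sigma^n K$, as required.

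The main obstacle I expect is pinning down the right local gadget $H$. Using $H$ a single edge (as in Proposition \ref{subdiv4}) leaves the maximum degree unchanged, and naive larger choices such as the star consisting of all edges at $v_0$ produce a new vertex $v_{\{v_0\}}$ of degree equal to $\deg_G(v_0)$, which defeats the purpose. The two-edge choice above is essentially the minimal $H$ that both strictly reduces $\deg(v_0)$ and keeps all newly created vertices within degree $3$; the rest is just bookkeeping of adjacencies and induction on $E(G)$.
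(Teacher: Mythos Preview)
Your argument is correct and essentially identical to the paper's: both pick a vertex $w$ of degree $\ge 4$, take $H$ to be the length-$2$ path through two of its neighbors, and verify that in $S(G,H)$ the degree of $w$ drops by one while all other old degrees are unchanged and the three new vertices have degrees $2,3,2$. The only cosmetic difference is the choice of induction invariant (your $E(G)=\sum_w\max(0,\deg_G(w)-3)$ versus the paper's $\sum_{\deg(w)>3}\deg(w)$), both of which strictly decrease under the move.
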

\begin{proof}
As in the proof of Theorem \ref{skwar}, we will see that for any graph $G$ there exists a sequence of graphs, each of which is the suspension of the previous one over some subgraph, that starts in $G$ and ends in a graph with maximum degree less than or equal to $3$. We make the proof by induction (in the sum of the degrees of the vertices $v\in G$ such that $deg(v)>3$).

Suppose that $G$ contains a vertex $w$ of degree greater than $3$. Let $w_1$ and $w_2$ be two different neighbors of $w$. Consider the subgraph $H$ which consist of the vertices $w,w_1,w_2$ and the edges $(w,w_1),(w,w_2)$. The suspension $S(G,H)$ contains the vertices of $G$ and three new vertices $v, v_{\{w\}}$ and $v_{\{w_1,w_2\}}$. The degrees in $S(G,H)$ of the vertices different from $w, v,  v_{\{w\}}$ and $v_{\{w_1,w_2\}}$ are the same as in $G$. But $deg_{S(G,H)}(w)=deg_G(w)-1$, $deg(v)=2$, $deg(v_{\{w\}})=3$ and $deg(v_{\{w_1,w_2\}})=2$. By induction there exists a sequence as we want starting in $S(G,H)$, and therefore there is one starting in $G$.    
\end{proof}

As we mentioned in Section \ref{sectionmatchings}, the independence complex of a graph with $n$ vertices and maximum degree $m$ is $[\frac{n-1}{2m}-1]$-connected. The ideas used in that section lead us to a similar result but involving the dimension of the complex instead of the number of vertices. 

\begin{prop} \label{maxdegm}
Let $G$ be a graph of maximum degree $m$. Then $I_G$ is $[\frac{dim(I_G)}{m}-1]$-connected.
\end{prop}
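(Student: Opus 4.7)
The plan is to mimic the proof of Theorem \ref{conn}, replacing the claw-free hypothesis (which forced each vertex to be adjacent to at most $2$ vertices of a fixed independent set) with the bounded maximum degree hypothesis (which forces each vertex to be adjacent to at most $m$ vertices of any set at all).

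First I would pick an independent set $\sigma$ of maximum cardinality, so $|\sigma|=\dim(I_G)+1=d+1$. By Lemma \ref{none}, $SC(\sigma)$ is contractible. The central claim is that $SC(\sigma)$ contains the $([d/m]-1)$-skeleton of $I_G$, which is the same as the $[d/m-1]$-skeleton since $[x-1]=[x]-1$ for real $x$.

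To prove the claim, let $\tau\in I_G$ have dimension $r\le [d/m]-1$, so $r+1\le [d/m]\le d/m$. If $\tau\cap\sigma\ne\emptyset$, then $\tau$ trivially can be extended to any of its own vertices in $\sigma$, so $\tau\in SC(\sigma)$. Otherwise, each vertex of $\tau$ has at most $m$ neighbours in $G$, hence at most $m$ neighbours in $\sigma$, so the total number of vertices of $\sigma$ that are adjacent to some vertex of $\tau$ is at most $m(r+1)\le d<d+1=|\sigma|$. Thus some vertex $v\in\sigma$ is not adjacent to any vertex of $\tau$ and does not belong to $\tau$; this vertex extends $\tau$, so $\tau\in st(v)\subseteq SC(\sigma)$.

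Finally, since the $([d/m]-1)$-skeleton of $I_G$ lies in $SC(\sigma)$, cellular approximation yields $\pi_k(I_G,SC(\sigma))=0$ for $k\le[d/m]-1$. From the contractibility of $SC(\sigma)$ and the long exact sequence
\[
\ldots\to\pi_k(SC(\sigma))\to\pi_k(I_G)\to\pi_k(I_G,SC(\sigma))\to\ldots,
\]
we conclude $\pi_k(I_G)=0$ for $k\le[d/m]-1=[d/m-1]$, i.e.\ $I_G$ is $[\dim(I_G)/m-1]$-connected, as required.

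There is no real obstacle; the argument is essentially a transplant of the proof of Theorem \ref{conn} with the uniform degree bound $m$ playing the role of the constant $2$ that came from the claw-free hypothesis. The one small subtlety worth handling carefully is the case $\tau\cap\sigma\ne\emptyset$, since then the counting argument is vacuous but inclusion in $SC(\sigma)$ is immediate from the definition of star cluster.
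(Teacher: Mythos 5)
Your proof is correct and follows the same approach as the paper: the paper simply chooses a maximum independent set $\sigma$, observes that every independent $\tau$ with at most $[\frac{d}{m}]$ vertices extends to some vertex of $\sigma$, and then cites Proposition \ref{mascon}, whereas you unfold that citation by redoing the skeleton-containment and long-exact-sequence argument inline (which is exactly how the paper proves Proposition \ref{mascon} in the first place).
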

\begin{proof}
Let $\sigma$ be an independent set of $G$ of maximum cardinality $d+1=dim(I_G)+1$. If another independent set $\tau$ has at most $[\frac{d}{m}]$ vertices, it can be extended to some vertex of $\sigma$. The result follows then from Proposition \ref{mascon}. 
\end{proof}

We can deduce then that if $G$ has maximum degree $m$, the support of the homology of $I_G$ lies in an interval of the form $\{[\frac{n}{m}], \ldots , n\}$. Therefore, the independence complexes of graphs with maximum degree $3$ do not cover all the homotopy types of complexes. Moreover, the previous remark gives a lower bound for the number $r$ in the statement of Theorem \ref{maxdeg3}. If $\widetilde{H}_i(K)\neq 0 \neq \widetilde{H}_j(K)$ for some $i<j$, then $r$ has to be greater than $\frac{j-3i-3}{2}$. If $\Sigma ^r(K)\simeq I_G$ for $G$ of maximum degree at most $3$, then $\{i+r,j+r\}\subseteq supp(\widetilde{H}(\Sigma ^r(K)))\subseteq \{[\frac{n}{3}], \ldots , n\}$ for some $n$. In particular $i+r\ge [\frac{n}{3}]> \frac{n}{3} -1$, $j+r\le n$, and hence $r>\frac{j-3i-3}{2}$, as we claimed.

Given a class $\mathcal{C}$ of graphs with bounded maximum degree, the homotopy types of the independence complexes of elements of $\mathcal{C}$ is a proper subset of the set of homotopy types of complexes. In other words, we have the following

\begin{coro}
Given a positive integer $n$ there exists a complex $K$ such that for any graph $G$ with $I_G\simeq K$, the maximum degree of $G$ is strictly greater than $n$.
\end{coro}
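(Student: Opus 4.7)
The plan is to exhibit $K$ as a wedge of two spheres whose dimensions are spread far apart, and then invoke Proposition \ref{maxdegm} to force any graph realizing $I_G \simeq K$ to have large maximum degree.

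More concretely, given the positive integer $n$, I propose to take $K = S^0 \vee S^n$. Suppose $G$ is a graph with $I_G \simeq K$ and let $m$ denote its maximum degree; the goal is to show $m > n$. On one hand, since $\widetilde{H}_n(I_G) \cong \widetilde{H}_n(K) \ne 0$, the dimension of the simplicial complex $I_G$ must satisfy $\dim(I_G) \ge n$. On the other hand, $\widetilde{H}_0(I_G) \cong \widetilde{H}_0(K) \ne 0$, so $I_G$ is not $0$-connected.

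Now apply Proposition \ref{maxdegm}: $I_G$ is $[\frac{\dim(I_G)}{m}-1]$-connected. The failure of $0$-connectivity therefore forces $[\frac{\dim(I_G)}{m}-1] < 0$, which is equivalent to $\dim(I_G) < m$. Combining this with $\dim(I_G) \ge n$ yields $m > n$, proving the corollary.

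There is no real obstacle here; the argument is just a matter of choosing a complex whose reduced homology has support ranging from degree $0$ up to at least degree $n$. The wedge $S^0 \vee S^n$ is the most economical choice, and the bound from Proposition \ref{maxdegm} does all the work. The only subtlety worth mentioning is ensuring that non-triviality of homology in degree $n$ really forces $\dim(I_G) \ge n$, which is standard since simplicial homology vanishes above the dimension of the complex.
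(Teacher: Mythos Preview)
Your proof is correct and follows essentially the same approach as the paper, which leaves the corollary as an immediate consequence of the preceding discussion that the support of $\widetilde{H}(I_G)$ lies in an interval $\{[\frac{d}{m}],\ldots,d\}$ (a reformulation of Proposition \ref{maxdegm}). Your explicit choice $K=S^0\vee S^n$ makes this argument concrete and is the cleanest possible instance.
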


%\textbf{Open questions}

%Some interesting problems that remain still open, to the knowledge of the author, are for intance the classification of the homotopy types of independence complexes of edge graphs, claw-free graphs, planar graphs, graphs with bounded maximum degree, Kneser and stable Kneser graphs with $n\ge 3$, rectangular square grids with sides parallel to the axes and graphs with bounded chromatic number, just to name some classes related to those studied in this article. Many of these questions seem to be difficult to solve, but it could be maybe simpler to compare two of these classes $\mathcal{C}_1, \mathcal{C}_2$ and prove that for any $G\in \mathcal{C}_1$ there exist $H\in \mathcal{C}_2$ such that $I_H\simeq I_G$.     

\bigskip

\textbf{Acknowledgments}
\medskip

I am grateful to Jakob Jonsson for inspiring me with his work to study independence complexes of graphs. The original questions of this article were motivated by a talk he presented in the combinatorics seminar here at KTH. I would also like to thank Anders Bj\"orner and Gabriel Minian for useful discussions and kind advices.


\begin{thebibliography}{99}

\bibitem{Aha} R. Aharoni, E. Berger and R. Ziv. \textit{Independent systems of representatives in weighted graphs}.
		 Combinatorica  27(2007), no. 3, 253-267.

\bibitem{Bab} E. Babson and D.N. Kozlov. \textit{Proof of the Lov\'asz Conjecture}.
		Ann. Math. 165(2007), no. 3, 965-1007.

\bibitem{Bar2} J.A. Barmak. \textit{Algebraic topology of finite topological spaces and applications}.
		PhD Thesis, Facultad de Ciencias Exactas y Naturales, Universidad de Buenos Aires (2009).

\bibitem{Bar3} J.A. Barmak. \textit{On Quillen's Theorem A for posets}.
		arXiv:1005.0538

\bibitem{Bar} J.A. Barmak and E.G. Minian. \textit{Strong homotopy types, nerves and collapses}.
		arXiv:0907.2954

\bibitem{Bjo} A. Bj\"orner. \textit{Topological methods}.
		Handbook of combinatorics (ed. R. Graham, M. Gr\"otschel and L. Lov\'asz; North-Holland, Amsterdam).

\bibitem{Bjo2} A. Bj\"orner, L. Lov\'asz, S.T. Vre\'cica and R.T. $\check{\textrm{Z}}$ivaljevi\'c. \textit{Chessboard complexes and matching complexes}.
		J. London Math. Soc. (2) 49(1994), no. 1, 25-39.

\bibitem{Bou3} S. Bouc. \textit{Homologie de certains ensembles de $2$-sous-groupes des groupes symétriques}.
		J. Algebra 150(1992) 158-186.

\bibitem{Bou1} R. Boulet, E. Fieux and B. Jouve. \textit{Simplicial simple-homotopy of flag complexes in terms of graphs}.
		European Journal of Combinatorics 31(2010) 161-176.

\bibitem{Bou2} M. Bousquet-M\'elou, S. Linusson and E. Nevo. \textit{On the independence complex of square grids}.
		 J. Algebraic Combin. 27(2008), no. 4, 423-450.

\bibitem{Bra} B. Braun. \textit{Independence complexes of stable Kneser graphs}.
		arXiv:0912.0720

\bibitem{Bro} R. Brown. \textit{Elements of modern topology}.
		McGraw-Hill, London, 1968.
 
\bibitem{Che} B. Chen, S.-T. Yau and Y.-N. Yeh. \textit{Graph homotopy and Graham homotopy}.
		Selected papers in honor of Helge Tverberg. Discrete Math. 241(2001) 153-170.

\bibitem{Cor2} O. Cornea, G. Lupton, J. Oprea and D. Tanr\'e. \textit{Lusternik-Schnirelmann Category}.
		Mathematical Surveys and Monographs vol. 103(2003).

%\bibitem{Cor} O. Cornea. \textit{Strong LS category equals cone-length}.
	%	Topology 34(1995), no. 2, 377-381.

\bibitem{Cso} P. Csorba. \textit{Subdivision yields Alexander duality on independence complexes}.
		Electron. J. Combin. 16(2009), no. 2, Special volume in honor of Anders Bjorner, Research Paper 11, 7 pp.  

\bibitem{Dow} C.H. Dowker. \textit{Homology groups of relations}.
		Ann. of Math. 56 (1952), 84-95.

\bibitem{Ehr} R. Ehrenborg and G. Hetyei. \textit{The topology of the independence complex}.
		European Journal of Combinatorics 27(2006) 906-923. 

\bibitem{Eng} A. Engstr\"om. \textit{Complexes of directed trees and independence complexes}.
		Discrete Mathematics 309(2009) 3299-3309. 

\bibitem{Eng2} A. Engstr\"om. \textit{Independence complexes of claw-free graphs}.
		European Journal of Combinatorics 29(2008) 234-241. 

\bibitem{Eng3} A. Engstr\"om. \textit{Topological combinatorics}.
		Ph.D. Thesis, Kungliga Tekniska h\"ogskolan, 2009.

%\bibitem{Gan} T. Ganea. \textit{Lusternik-Schnirelmann category and strong category}.
	%	Illinois J. Math.  11(1967) 417-427. 

\bibitem{Gar} P.F. Garst. \textit{Cohen-Macaulay complexes and group actions}.
		Ph.D. Thesis, University of Wisconsin-Madison, 1979.

\bibitem{Jon} J. Jonsson. \textit{On the topology of independence complexes of triangle-free graphs}.
		Preprint.

\bibitem{Koz} D.N. Kozlov. \textit{Complexes of directed trees}.
		Journal of Combinatorial Theory, Series A 88(1999) 112-122. 

\bibitem{Lov} L. Lov\'asz. \textit{Kneser's conjecture, chromatic number, and homotopy}.
		Journal of Combinatorial Theory, Series A 25(1978) 319-324.

%\bibitem{Mcc} M.C. McCord. \textit{Singular homology groups and homotopy groups of finite topological spaces}.
%		Duke Math. J. 33(1966) 465-474.

%\bibitem{Qui} D. Quillen. \textit{Homotopy properties of the poset of nontrivial $p$-subgroups of a group}.
%		Adv. in Math. 28 (1978) 101-128.

\bibitem{Sha} J. Shareshian and M. Wachs. \textit{Torsion in the matching complex and the chessboard complex}. 
		Adv. in Math. 212(2007) 525-570.

\bibitem{Skw} J. Skwarski. \textit{Operations on a graph $G$ that shift the homology of the independence complex of $G$}.
		Examensarbete, Kungliga Tekniska h\"ogskolan, 2010.

\bibitem{Sta} R. Stanley, Enumerative Combinatorics, vol. I, Wadsworth and Brooks/Cole, Pacific Grove, 1986.


        
\end{thebibliography}
\end{document}